\newtheorem{theorem}{Theorem}
\newtheorem{corollary}{Corollary}
\newtheorem{lemma}{Lemma}
\newtheorem{definition}{Definition}
\newtheorem{remark}{Remark}
\newtheorem{conjecture}{Conjecture}
\newcommand\restr[2]{{
  \left.\kern-\nulldelimiterspace 
  #1 
  \vphantom{\big|} 
  \right|_{#2} 
  }}
\newcommand{\pdiv}{\mid\!\mid}
\newcommand {\ZZ}{{\mathbb{Z}}}
\newcommand {\QQ}{{\mathbb{Q}}}
\newcommand {\mcalO}{{\mathcal{O}}}
\newcommand {\mfp}{{\mathfrak{P}}}
\DeclareMathOperator{\Norm}{Norm}
\DeclareMathOperator{\im}{Im}
\DeclareMathOperator{\coker}{Coker}
\DeclareMathOperator{\Ker}{Ker}
\title{Solving Fermat-type equations over quadratic fields}
\author{Begum Gulsah Cakti}
\date{}
\newcounter{cases}
\newcounter{subcases}[cases]
\let\oldabstract\abstract
\let\oldendabstract\endabstract
\renewenvironment{abstract}
{%
               {\list{}{\addtolength{\leftmargin}{1em} 
                        \listparindent 1.5em%
                        \itemindent    \listparindent%
                        \rightmargin   \leftmargin%
                        \parsep        \z@ \@plus\p@}%
                \item\relax}%
               {\endlist}%
\oldabstract}
{\oldendabstract}
\begin{document}

\maketitle

    

\begin{abstract}
    This paper applies the modular approach to obtain effectively computable bounds for Fermat-type equations over number fields, while also discussing the differences and obstructions that arise when considering such equations over totally real versus totally complex number fields. We use these techniques to study the generalized Fermat equation $d^ra^p+b^p+c^p=0$ over quadratic fields $\QQ(\sqrt{d})$ of class number one. Extending the results of Freitas\&Siksek and Turcas, we show that when $d=-3,-11,-19,-43,3,5,7,11,13,19,23$, there is an effective and explicit bound, depending on the field $\QQ(\sqrt{d})$, such that the latter equation does not have certain types of special solutions. We also discuss, for $d=6,14,21$, the solutions of a variant of the above equation. Our results over imaginary quadratic fields are conjectural. Serre's modularity conjecture and an analogue of Eichler-Shimura over totally complex fields are assumed. 
\end{abstract}

\section{Introduction}

After Wiles' proof of Fermat's Last Theorem was finalized in $1995$, number theorists became eager to generalize this result via the modular approach by studying the classical Fermat equation 
\begin{equation}\label{fermat}
 x^n+y^n+z^n=0  
\end{equation}over general number fields $K$. In $2004$, Jarvis and Meekin, \cite{jarvis2004fermat}, proved that Fermat's Last Theorem holds over the field $\QQ(\sqrt{2})$. This was followed by the results of Freitas–Siksek and Michaud–Jacobs, \cite{freitas2015fermat}, \cite{michaud2021fermat}, leading to the proof that Fermat’s Last Theorem holds for real quadratic fields with small discriminant. 

After the progress on Fermat’s Last Theorem over number fields, it became natural to consider its generalizations from various angles. For example, what happens if we study Fermat-type equations of the form 
\begin{equation}\label{genfermeq}
  Ax^p+By^p+Cz^p=0  
\end{equation}where $A,B,C$ belonging to the ring of integers $\mcalO_K$ of a number field $K$? As a result of this question, the following conjecture has been proposed:

\begin{conjecture}[Asymptotic Generalized Fermat Conjecture (AGFC)]\label{AGFC}
Let $K$ be a number field, and $A$, $B$, $C$ be non-zero elements of $\mcalO_K$. Let $\Omega$ be the subgroup of roots of unity inside $\mcalO_K^{\times}$. Suppose $$A\omega_1+B\omega_2+C\omega_3\neq0\text{,  for any $\omega_1,\omega_2,\omega_3\in\Omega$}.$$Then there exists a constant $\mathcal{B}(K,A,B,C)$, depending only on $K,A,B,C$, such that for all primes $p>\mathcal{B}(K,A,B,C)$, the only solutions to the Fermat equation $$Ax^p+By^p+Cz^p=0$$with $(x,y,z)\in K^3$ are the trivial solutions. 
\end{conjecture}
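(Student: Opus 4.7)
The plan is to follow the modular approach in the spirit of the proof of Fermat's Last Theorem, with the technical enhancements developed by Freitas--Siksek, Jarvis--Meekin, and Turcas for the number field setting. Given a hypothetical non-trivial solution $(x,y,z)\in\mcalO_K^3$ to $Ax^p+By^p+Cz^p=0$ (one may reduce to pairwise coprime integral solutions at the cost of enlarging the constants, using that $\mcalO_K$ is Dedekind), the first step is to attach a Frey elliptic curve $E/K$ of Hellegouarch type, e.g.\ $E:Y^2=X(X-Ax^p)(X+By^p)$. Tate's algorithm then shows that the conductor $\mathfrak{N}(E)$ is supported on the primes of $\mcalO_K$ dividing $2ABC$ together with those above $p$, with exponents at places above $p$ governed by the usual dichotomy between multiplicative reduction and additive reduction of prescribed valuation.

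Next I would analyze the mod-$p$ Galois representation $\bar\rho_{E,p}\colon G_K\to\operatorname{GL}_2(\mathbb{F}_p)$. To apply level-lowering one needs $\bar\rho_{E,p}$ to be absolutely irreducible; this should hold for $p$ larger than an effective bound $\mathcal{B}_1(K,A,B,C)$, by combining bounds on $K$-rational isogenies (via Merel-type uniform boundedness, or David-style explicit bounds) with a case analysis of possible reducibility subgroups. Invoking Serre's modularity conjecture over $K$ (assumed in the paper), $\bar\rho_{E,p}$ arises from an eigenform $f$ of parallel weight $2$ and some level dividing $\mathfrak{N}(E)$. A Ribet-type level-lowering theorem then descends $f$ to a newform $g$ at a level $\mathfrak{N}_0$ that depends only on $K$, $A$, $B$, $C$, not on $p$.

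The finitely many newforms $g$ at level $\mathfrak{N}_0$ are then the source of the bound. For each $g$ one seeks a contradiction: either the Hecke eigenvalues $a_\mathfrak{q}(g)$ differ from every possible trace of Frobenius of a Frey curve at a well-chosen auxiliary prime $\mathfrak{q}$, yielding a congruence obstruction $\ell\mid\Norm(a_\mathfrak{q}(g)-a_\mathfrak{q}(E))$ that rules out $p$ beyond an explicit threshold; or $g$ has rational integer eigenvalues and, by Eichler--Shimura (or its conjectural analogue over totally complex $K$), corresponds to an elliptic curve $E'/K$ of conductor $\mathfrak{N}_0$. In the second case one must rule out the residual isomorphism $\bar\rho_{E,p}\simeq\bar\rho_{E',p}$ for large $p$, typically by exploiting the $2$-torsion structure forced by the Frey construction or by producing an irrational prime of good reduction where traces disagree. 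Taking the maximum over the finite list of $g$'s yields $\mathcal{B}(K,A,B,C)$.

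The main obstacles are exactly the hypotheses that the paper must assume. Serre's modularity conjecture for two-dimensional mod-$p$ representations of $G_K$ is only known in very restricted cases, and over totally complex fields the Eichler--Shimura relation needed to pass from rational eigenforms to elliptic curves is itself conjectural; without these, one cannot close the loop between $\bar\rho_{E,p}$ and a newform at $\mathfrak{N}_0$. A second serious difficulty is the uniform control of reducibility of $\bar\rho_{E,p}$, which outside class number one and small discriminant has not been made effective. Finally, the elimination step can genuinely fail when $g$ corresponds to a Frey-like curve with a matching torsion structure, and this is the source of the case-by-case hypotheses on $d$ that appear in the paper's main theorems.
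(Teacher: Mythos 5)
The statement you are asked to prove is labelled a \emph{conjecture} in the paper, and the paper offers no proof of it: it only establishes special cases (Theorems \ref{mainthm1} and \ref{mainthm3}) for particular quadratic fields and the particular family $d^ra^p+b^p+c^p=0$, and even those are conditional on Conjectures \ref{serrconj} and \ref{fakellcurve} in the imaginary quadratic cases. Your proposal is a competent outline of the modular method, but it is not a proof, and you essentially concede this yourself in your final paragraph. The gaps you name there are not minor technicalities to be filled in later; each one is an open problem. Concretely: (1) modularity of $\bar\rho_{E,p}$, or Serre's conjecture over a general number field $K$, is unknown, so the passage from the Frey curve to an eigenform cannot be made; (2) over totally complex $K$ the Eichler--Shimura step from a rational eigenform back to an elliptic (or fake elliptic) curve is itself conjectural (Conjecture \ref{fakellcurve}); (3) effective irreducibility of $\bar\rho_{E,p}$ for $p$ beyond a bound depending only on $K,A,B,C$ is not a consequence of Merel's theorem, which controls torsion, not isogenies --- the paper gets around this only for specific small fields by computing ray class groups and invoking degree-four torsion bounds, and its absolute-irreducibility argument over imaginary quadratic fields already requires congruence conditions on $p$; and (4) the newform-elimination step can genuinely fail, as the paper's own appendix documents for $d=17$ and $d=21$, where a rational Hilbert newform at the lowered level has potentially multiplicative reduction at the relevant prime and neither Lemma \ref{C_f} nor Lemma \ref{lemma:imageofinertia} applies.

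There is also a point earlier in your sketch that is too optimistic even granting the conjectures: you assert that the conductor exponents of the Frey curve at primes dividing $2ABC$ are controlled uniformly, but for general coefficients $A,B,C$ the curve need not be semistable outside a small fixed set, and the reduction to ``pairwise coprime integral solutions'' does not remove the dependence of the level on the solution at primes dividing $ABC$ unless one imposes exactly the kind of primitivity and parity hypotheses ($(da,b,c)$ primitive, $2\mid abc$) that the paper builds into its theorem statements. This is why the paper proves restricted statements rather than AGFC. In short: your write-up correctly describes the strategy and correctly diagnoses why it cannot currently be completed, so it should be presented as a survey of the approach and its obstructions, not as a proof of the conjecture.
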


The constant $\mathcal{B}(K,A,B,C)$ in the statement of Conjecture \ref{AGFC} is called \textit{effective} if it is effectively computable, and called \textit{ineffective} otherwise. In \cite{deconinck2015generalized}, Deconinck showed that AGFC holds for totally real fields whose prime ideals satisfy certain valuative conditions when the coefficients $A,B,C$ of the Equation (\ref{genfermeq}) are odd in the ring of integers of the considered field. This result was then generalized by Kara and \"{O}zman in \cite{kara2020asymptotic} to number fields satisfying usual conjectures about modularity. Note that the constants appearing in the aforementioned works of Deconinck and Kara-\"{O}zman are ineffective due to the lack of modularity results over general number fields. Therefore, despite the progress in proving AGFC, obtaining effective bounds $\mathcal{B}(K,A,B,C)$ has remained an open question.

\subsection{Statements of our results}

Our aim in this paper is to discuss the modular approach in studying Fermat-type equations over general number fields and explain the obstacles in obtaining effective bounds when proving Conjecture \ref{AGFC}. Extending the results of Freitas-Siksek \cite{freitas2015fermat} and Turcas \cite{cturcacs2020serre}, we will prove that there are effective and explicit bounds when studying generalized Fermat equations over certain real quadratic and imaginary quadratic fields. For simplicity, we will apply the modular method to the so-called $d$-Fermat equation but at the end we will also mention what happens if the setting is much more general by also highlighting the limitations of the existing techniques. 

A solution $(a, b, c) \in K^3$ to the generalized Fermat equation (\ref{genfermeq}) is called \textit{trivial} if $abc = 0$, and non-trivial otherwise. A solution $(a, b, c)$ to Equation (\ref{genfermeq}) is called \textit{integral} if $(a, b, c) \in \mcalO_K^3$. Finally, a non-trivial solution $(a, b, c)$ to Equation (\ref{genfermeq}) is called \textit{primitive} if $a$, $b$, and $c$ are pairwise coprime. Our main results are as follows:

\begin{theorem}\label{mainthm1}
Let $d=3,5,7,11,13,19,23$ and $r\in\ZZ_{\ge 1}$. Consider the generalized Fermat equation 
\begin{equation}\label{dfermat}
 d^ra^p+b^p+c^p=0  
\end{equation}over $K=\QQ(\sqrt{d})$. Assume $p\ne d$ when $d=11,13,23$, and $2\mid abc$ if $d=5,13$. Then, for any $r$, Equation (\ref{dfermat}) has no non-trivial solutions $(a,b,c)$ over $\mcalO_K$ such that $(da,b,c)$ is primitive when $p>C_K$, where $$C_K=
\left\{\begin{array}{l}
d \text {, if $d=5,7,19$ } \\
7\text {, if $d=3,11,13$ }\\
11 \text {, if $d=23$. }
\end{array}\right.$$
\end{theorem}

\begin{theorem}\label{mainthm2:d=-3}
Let $d=-3$ and $r\in\ZZ_{\ge 1}$. Assume that Conjecture \ref{serrconj} and Conjecture \ref{fakellcurve} hold for $K=\QQ(\sqrt{d})$. Then, for any $r$, Equation (\ref{dfermat}) has no non-trivial solutions $(a,b,c)$ over $\mcalO_K$ such that $(da,b,c)$ is primitive and $2\mid abc$ when $p> 13$.    
\end{theorem}

\begin{theorem}\label{mainthm3}
Let $d=-11,-19,-43$ and $r\in\ZZ_{\ge 1}$. Assume that Conjecture \ref{serrconj} holds for $K=\QQ(\sqrt{d})$. Consider Equation (\ref{dfermat}) over $K$  and assume $p\ne d$. When $2\nmid abc$, assume in addition that $p$ splits in $K$ and $p\equiv3\pmod4$. Let $B_K$ be the constant defined in Corollary \ref{bound:B_K}. Let $l_K,p_K$ be the largest prime corresponding to $K$ in Table \ref{table:cremona_output}, Table \ref{primetortable}, respectively. Then, for any $r$, Equation (\ref{dfermat}) has no non-trivial solutions $(a,b,c)$ over $\mcalO_K$ such that $(da,b,c)$ is primitive when $p>C_K=\max\{B_K,l_K,p_K\}$.
\end{theorem}

\subsection*{Acknowledgements}
I am extremely grateful to my advisor, Ekin \"{O}zman, for introducing me to the field, for her continuous support, and for her careful reading of this work. I would like to thank John Cremona for very helpful discussions on computing Bianchi modular forms and for his computational support, which allowed us to improve our bounds for two cases in Theorem \ref{mainthm3}. I would also like to thank Yasemin Kara for pointing out an error in an earlier version of this paper, and F{\i}rt{\i}na K\"{u}\c{c}\"{u}k for fruitful discussions on the cohomology of locally symmetric spaces. I would like to thank the anonymous referee for careful reading and valuable suggestions which improved the exposition of this paper. Finally, many thanks to the University of Groningen, Bernoulli Institute for their hospitality. This work was supported by T\"{U}B\.ITAK (The Scientific and Technological Research Council of T\"{u}rkiye) Research Grant 122F413 and finalized with the support of T\"{U}B\.ITAK 2214-A Grant No. 1059B142400663.

\subsection{An overview of the modular approach}

In this section, we aim to explain the main steps of the modular approach by also illustrating the differences between totally real fields and number fields that have complex embeddings. Let $K$ be a number field and let $G_K:=\operatorname{Gal}(\overline{K}/K)$ be the absolute Galois group of $K$.\\

\noindent\textbf{Frey curve: }To a hypothetical solution $(a,b,c)\in K^3$ of the Fermat-type equation, we attach an elliptic curve $E/K$. We usually require the elliptic curve $E$ to be semistable outside a small set of primes of $K$. Note that this is usually not manageable in full generality, so it sometimes leads to considering special solutions of the equation. In fact, this is the only reason we focused on special solutions in the statement of Theorem \ref{mainthm1}, Theorem \ref{mainthm2:d=-3} and Theorem \ref{mainthm3}. This way, we ensured that the elliptic curve attached to any putative solution of the 
$d$-Fermat equation is semistable away from the primes of $K$ above $2$.

\noindent\textbf{Irreducibility of $\bar{\rho}_{E, p}$: }Let $\bar{\rho}_{E, p}$ be the mod $p$ Galois representation attached to the elliptic curve $E$. We want to show that $\bar{\rho}_{E, p}$ is (absolutely) irreducible. Depending on the number of $2$-torsion points of $E$ over $K$, it is possible to prove this step by considering non-cuspidal $K$-rational points on certain modular curves at level depending on $p$. We explain and apply the existing methods to our results in Section \ref{section:irreducibility}.\\

\noindent\textbf{Modularity of the Frey curve: }Next, we require that the Frey curve $E/K$ is modular. When $K$ is totally real, it is known that all but finitely many $j$-invariants are modular. When $K$ is real quadratic all elliptic curves are modular (see \cite{freitas2015elliptic}). When $K$ has complex embeddings, it is usually necessary to assume a version of Serre's modularity conjecture (see Conjecture \ref{serrconj}).\\

\noindent\textbf{Level-lowering: }Applying level-lowering recipes or assumed conjectures, the above steps guarantee the existence of modular forms of parallel weight $2$ at level depending on the conductor of the elliptic curve $E$ but not depending on the solution $(a,b,c)$ whose representations are isomorphic to $\bar{\rho}_{E, p}$. We introduce these recipes in Section \ref{section:preliminaries}. \\

\noindent\textbf{Eliminating newforms: } To reach a contradiction, we need to eliminate each newform arising in the level-lowering step. If there are no modular forms at the given level, the result follows. However, this is usually not the case. If they are all irrational, it is possible to bound $p$ and get a contradiction. If there are rational newforms at the considered level, one can apply a version of the Eichler-Shimura theorem to get an elliptic curve $E'/K$ such that $\bar{\rho}_{E, p}\sim \bar{\rho}_{E', p}$. There are techniques to discard the previous isomorphism. We remark that when $K$ is totally real, Eichler-Shimura holds if the level of the form is squarefull. Over general number fields, it is conjectural (see Conjecture \ref{fakellcurve} ). We explain the existing techniques on this step in Section \ref{techniques}.\\

We remark that the method explained above is not unique when applying the modular approach and it is possible to relate the non-trivial solutions of a given Fermat-type equation to certain $S$-unit equations. However, since we aim to obtain effective and explicit results, we preferred the above method. The reader interested in the approach via $S$-units may refer to \cite{ozman2021s}.

\section{Preliminaries}\label{section:preliminaries}

Since we will prove Theorem \ref{mainthm1}, Theorem \ref{mainthm2:d=-3} and Theorem \ref{mainthm3} via the modular approach, we will now introduce the main results to apply the steps that we described earlier. We start with totally real case.

\subsection{Totally real fields}

When studying the solutions of Fermat-type equations over totally real fields, one of the most crucial steps of the modular method is to apply level-lowering recipes properly. For the convenience of the reader, we express below a level-lowering theorem by Freitas and Siksek:

\begin{theorem}[{\cite[Theorem 7]{freitas2015asymptotic}}]\label{levellow}
Let $K$ be a totally real field. Let $p \geq 5$ be a prime. Suppose $\mathbb{Q}\left(\zeta_p\right)^{+} \nsubseteq K$. Let $E$ be an elliptic curve over $K$ with conductor $\mathcal{N}$. Suppose $E$ is modular and $\bar{\rho}_{E, p}$ is irreducible. Denote by $\Delta_{\mathfrak{q}}$ the discriminant for a local minimal model of $E$ at a prime ideal $\mathfrak{q}$ of $K$. Let

$$
\mathcal{M}_p:=\prod_{\substack{\mathfrak{q}\pdiv \mathcal{N}\\ p| v_{\mathfrak{q}}\left(\Delta_{\mathfrak{q}}\right)}} \mathfrak{q}, \quad \mathcal{N}_p:=\frac{\mathcal{N}}{\mathcal{M}_p}
$$Suppose the following conditions are satisfied for all prime ideals $\mathfrak{q} \mid p$:
\begin{enumerate}[(i)]
\item $E$ is semistable at $\mathfrak{q}$;
\item\label{item:ii-levellowering} $p \mid v_\mathfrak{q}\left(\Delta_\mathfrak{q}\right)$;
\item the ramification index satisfies $e(\mathfrak{q} / p)<p-1$.
\end{enumerate}

Then, $\bar{\rho}_{E, p} \sim \bar{\rho}_{\mathfrak{f}, \omega}$ where $\mathfrak{f}$ is a Hilbert eigenform of parallel weight 2 that is new at level $\mathcal{N}_p$ and $\omega$ is a prime ideal of $\mathbb{Q}_f$ that lies above $p$.
\end{theorem}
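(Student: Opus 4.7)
The plan is to produce an initial Hilbert newform from modularity of $E$, then apply the level-lowering machinery for Hilbert modular forms over totally real fields (due to Fujiwara, Jarvis, and Rajaei, building on Ribet and Mazur) to strip off the primes appearing in $\mathcal{M}_p$ one at a time. Since $E/K$ is modular, there is a Hilbert newform $\mathfrak{g}$ of parallel weight $2$, trivial character, and level $\mathcal{N}$, together with a prime $\omega$ above $p$ such that $\bar{\rho}_{E,p} \cong \bar{\rho}_{\mathfrak{g},\omega}$. The task then reduces to lowering the level of the associated Galois representation from $\mathcal{N}$ down to $\mathcal{N}_p$.

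For a prime $\mathfrak{q}$ appearing in $\mathcal{M}_p$ with $\mathfrak{q}\nmid p$, the hypothesis $\mathfrak{q}\pdiv\mathcal{N}$ means $E$ has multiplicative reduction at $\mathfrak{q}$, so Tate uniformization describes $\bar{\rho}_{E,p}|_{G_\mathfrak{q}}$ in terms of the Tate parameter $q_\mathfrak{q}$, whose valuation equals $v_\mathfrak{q}(\Delta_\mathfrak{q})$. The condition $p \mid v_\mathfrak{q}(\Delta_\mathfrak{q})$ then forces $\bar{\rho}_{E,p}$ to be unramified at $\mathfrak{q}$, and Jarvis's extension of Mazur's principle to Hilbert modular forms, together with the absolute irreducibility of $\bar{\rho}_{E,p}$ and the hypothesis $\QQ(\zeta_p)^{+}\not\subseteq K$, yields a Hilbert eigenform of the same parallel weight $2$ at the level obtained by removing $\mathfrak{q}$. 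Iterating disposes of all such $\mathfrak{q}\nmid p$ appearing in $\mathcal{M}_p$.

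For primes $\mathfrak{q}\mid p$ satisfying (i)-(iii), semistability (i) together with (ii) makes the Tate-curve analysis apply just as above, while the ramification bound $e(\mathfrak{q}/p)<p-1$ of (iii) is precisely the hypothesis required by Fontaine-Laffaille theory to conclude that $\bar{\rho}_{E,p}|_{G_\mathfrak{q}}$ arises from a finite flat group scheme over $\mathcal{O}_{K_\mathfrak{q}}$. Rajaei's level-lowering theorem at primes above $p$ in the totally real setting (conditional on modularity, which we already have) then strips these primes as well, leaving a newform $\mathfrak{f}$ of parallel weight $2$ at level $\mathcal{N}_p$ with $\bar{\rho}_{E,p}\cong\bar{\rho}_{\mathfrak{f},\omega}$, as required.

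The main technical point is verifying, at each iteration, that the hypotheses of the relevant Jarvis-Rajaei theorem remain in force: absolute irreducibility of the residual representation must persist through the successive stripping procedure, and the condition $\QQ(\zeta_p)^{+}\not\subseteq K$ must continue to exclude the exceptional dihedral situation in which level-lowering at a given prime can fail. Both are standard to check once the initial hypotheses hold, and with them in place the Tate-curve input at $\mathfrak{q}\nmid p$ and the finite-flatness input at $\mathfrak{q}\mid p$ are essentially formal and deliver the conclusion.
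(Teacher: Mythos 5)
This statement is quoted verbatim from Freitas--Siksek (\cite{freitas2015asymptotic}, Theorem 7) and the paper offers no proof of it, so there is no internal argument to compare against; your outline is, however, a faithful reconstruction of the standard derivation in the cited source: modularity supplies a Hilbert newform at level $\mathcal{N}$, the Tate-curve computation shows $\bar{\rho}_{E,p}$ is unramified (resp.\ finite flat) at the primes of $\mathcal{M}_p$ away from (resp.\ above) $p$, and the level-lowering results of Fujiwara, Jarvis and Rajaei strip those primes, with $\mathbb{Q}(\zeta_p)^{+}\not\subseteq K$ used to upgrade irreducibility of $\bar{\rho}_{E,p}$ to absolute irreducibility of its restriction to $G_{K(\zeta_p)}$. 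One small correction: there is nothing to ``persist'' through the iteration --- the residual representation $\bar{\rho}_{E,p}$ is fixed once and for all and only the level of the form realizing it changes, so absolute irreducibility over $K(\zeta_p)$ need only be checked once at the outset.
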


In our case, the elliptic curve in consideration will be modular due to a result of Freitas, Le Hung and Siksek:

\begin{theorem}[{\cite[Theorem 1]{freitas2015elliptic}}]\label{box}
Let $E$ be an elliptic curve over a real quadratic field $K$. Then $E$ is modular.
\end{theorem}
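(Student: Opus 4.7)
The plan is to follow the strategy pioneered by Wiles over $\QQ$ and extended to totally real fields via the modularity lifting theorems of Skinner--Wiles, Kisin, Barnet-Lamb--Gee--Geraghty, and their collaborators. The key input is that, for a small prime $p$, if $\bar\rho_{E,p}$ is absolutely irreducible and modular (with mild local conditions at primes above $p$), then $E$ itself is modular. The task therefore reduces to establishing residual modularity of $\bar\rho_{E,p}$ for some $p\in\{3,5,7\}$ after appropriate twisting and base change.

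First I would deploy the so-called $3$-$5$-$7$ switch. For $p=3$, residual modularity of $\bar\rho_{E,3}$ follows from Langlands--Tunnell combined with solvable base change, provided $\bar\rho_{E,3}$ is absolutely irreducible; if not, $E$ gives a $K$-rational point on $X_0(3)$. Switching to $p=5$: either $\bar\rho_{E,5}$ is irreducible, in which case one finds an auxiliary elliptic curve $E'/K$ with $\bar\rho_{E',5}\simeq\bar\rho_{E,5}$ and $\bar\rho_{E',3}$ irreducible (Wiles' trick), or $E$ produces a $K$-point on $X_0(15)$. The same dichotomy at $p=7$ forces any remaining $E/K$ to give $K$-points on $X_0(21)$ and $X_0(35)$ as well. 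The net effect is that any potentially non-modular $E/K$ corresponds to a $K$-rational point on one of the curves in the short list $\{X(b_3,b_5),\,X(b_3,b_7),\,X(b_5,b_7)\}$ obtained from suitable fibre products.

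The hard part, and the core of the proof, is controlling these exceptional quadratic points. For $K$ real quadratic, one analyzes the quadratic points on each of the listed modular curves: for those of finite Mordell--Weil rank, Chabauty-type or Mazur-style formal immersion arguments give a complete list of sporadic points; for curves that admit infinitely many quadratic points (parameterized by rational points on some elliptic factor of the Jacobian), one cannot eliminate them geometrically and must instead verify modularity directly for every elliptic curve whose $j$-invariant arises from such a point, typically by descending to $\QQ$ via an isogeny, matching against a known Hilbert newform, or invoking a separately proved modularity statement for that specific $j$-invariant. The main obstacle is precisely this explicit geometry-of-modular-curves step: enumerating and handling the finitely many residual $j$-invariants that survive the $3$-$5$-$7$ sieve. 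Once every such $j$-invariant is checked to correspond to a modular elliptic curve, the modularity lifting theorems combine with the switch to conclude that every $E/K$ is modular.
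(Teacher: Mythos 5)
The paper does not prove this statement at all: it is quoted verbatim as Theorem 1 of Freitas--Le Hung--Siksek \cite{freitas2015elliptic} and used as a black box, so there is no internal proof to compare against. Your outline is, in broad strokes, a faithful description of the strategy of that cited paper (modularity lifting plus the $3$-$5$-$7$ switch, reduction to quadratic points on a finite list of modular curves, and direct verification for the $j$-invariants that survive).

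There is, however, one concrete gap in the sketch as written. The modularity lifting theorems you invoke require not merely that $\bar\rho_{E,p}$ be absolutely irreducible, but that its restriction to $G_{K(\zeta_p)}$ be absolutely irreducible (and, at $p=3,5$, the relevant failure mode is an image contained in the normalizer of a split Cartan subgroup, not only a Borel). Consequently the exceptional locus is not the three fibre products $X(b_3,b_5)$, $X(b_3,b_7)$, $X(b_5,b_7)$ you list, but the seven curves $X(u,v)$ with $u\in\{b_3,s_3\}$, $v\in\{b_5,b_7,s_5\}$ (appropriately paired) appearing in \cite{freitas2015elliptic}; in fact the genuinely hard cases there are precisely the Cartan-normalizer ones such as $X(s_3,b_5)$ and $X(s_3,s_5)$, together with $X(b_3,b_5)=X_0(15)$, which can have infinitely many quadratic points over a real quadratic field. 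Omitting the $s_p$ level structures means your sieve does not catch every potentially non-modular curve, so the argument as stated is incomplete; beyond that, the proposal is a high-level roadmap resting on substantial black boxes (Langlands--Tunnell, Skinner--Wiles, Kisin, Breuil--Diamond, Thorne) rather than a self-contained proof, which is unavoidable for a theorem of this depth.
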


\subsection{Totally complex fields}

Next, we present two crucial conjectures when studying Diophantine equations over general number fields. We start with a version of Serre's modularity conjecture, which we need to assume due to lack of modularity over number fields with complex embeddings.

\begin{conjecture}[Serre's modularity conjecture-{\cite[Conjecture 3.1]{csengun2018asymptotic}}]\label{serrconj}
Let $\bar{\rho}: G_K \rightarrow \mathrm{GL}_2\left(\overline{\mathbb{F}}_p\right)$ be an irreducible, continuous representation with Serre conductor $\mathcal{N}$ (prime-to-$p$ part of its Artin conductor) and trivial character (prime-to-p part of $\operatorname{det}(\bar{\rho}))$. Assume that $p$ is unramified in $K$ and that $\left.\bar{\rho}\right|_{G_{K_{\mathfrak{p}}}}$ arises from a finite-flat group scheme over $\mathcal{O}_{K_{\mathfrak{p}}}$ for every prime $\mathfrak{p} \mid p$. Then there is a mod $p$ eigenform $\Psi: \mathbb{T}_{\mathbb{F}_p}\left(Y_0(\mathcal{N})\right) \rightarrow \overline{\mathbb{F}}_p$ such that for all prime ideals $\mathfrak{q} \subseteq \mathcal{O}_K$, coprime to $p \mathcal{N}$
$$
\operatorname{Tr}\left(\bar{\rho}\left(\operatorname{Frob}_{\mathfrak{q}}\right)\right)=\Psi\left(T_{\mathfrak{q}}\right) .
$$
\end{conjecture}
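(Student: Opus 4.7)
The strategy would be to adapt the Khare--Wintenberger proof of Serre's modularity conjecture over $\mathbb{Q}$ to the number field $K$. That proof proceeds by a double induction on the Serre weight and the prime-to-$p$ level (here $\mathcal{N}$), using modularity lifting theorems at each step to transfer modularity from a residual representation with smaller invariants to the one at hand, and using potential modularity in the style of Taylor to produce an automorphic starting point. The hypotheses of the conjecture (unramified at $p$, finite-flat at primes above $p$, trivial prime-to-$p$ character) are exactly the local conditions needed for $\bar{\rho}$ to be a candidate for arising from a parallel weight $2$ eigenform of level $\mathcal{N}$.

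First I would establish potential modularity: produce a finite Galois extension $L/K$ such that $\bar{\rho}|_{G_L}$ is the mod-$p$ reduction of a $p$-adic representation attached to an automorphic form for $\mathrm{GL}_2/L$. Over totally real fields this is achieved via a Moret--Bailly argument that builds a $\mathrm{GL}_2$-type abelian variety with prescribed residual representation, after which one invokes known modularity results for such varieties. Next I would embed $\bar{\rho}$ into a strictly compatible system $\{\rho_\ell\}_\ell$ and shift to an auxiliary prime $\ell$ where the residual representation is simpler, then iterate modularity lifting theorems of Taylor--Wiles--Kisin type to reduce the weight and level successively until landing at a base case one can verify by hand (e.g., residually reducible, or small-image, or zero-dimensional Hecke algebra). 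Finally I would descend from $L$ down to $K$ by a suitable cyclic base change and identify the resulting eigensystem with a map $\Psi\colon \mathbb{T}_{\mathbb{F}_p}(Y_0(\mathcal{N})) \to \overline{\mathbb{F}}_p$ with $\Psi(T_{\mathfrak{q}}) = \mathrm{Tr}(\bar{\rho}(\mathrm{Frob}_{\mathfrak{q}}))$ for all $\mathfrak{q} \nmid p\mathcal{N}$.

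The hard part by far is the first step when $K$ has complex embeddings, which is precisely the setting in which the conjecture is invoked in Theorem \ref{mainthm3}. The Moret--Bailly technique over totally real fields exploits the existence of real points on auxiliary Hilbert modular-type moduli spaces, and this feature has no direct analogue when $K$ is imaginary quadratic. Moreover, the modularity lifting theorems one would want to apply over non-totally-real fields (developed by Calegari--Geraghty and subsequent authors) are themselves conditional on statements about the integral Betti cohomology of the associated arithmetic hyperbolic $3$-manifolds, in particular on the existence of Galois representations attached to mod-$p$ torsion Bianchi eigenclasses. Because both inputs are open in the generality required here, an honest proof proposal reduces to: assume the automorphy of a single member of a well-chosen compatible system, assume attachment of Galois representations to Bianchi eigenforms, and then run the Khare--Wintenberger machine. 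This is exactly why the present paper adopts Conjecture \ref{serrconj} as a hypothesis rather than proving it.
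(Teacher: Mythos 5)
This statement is a \emph{conjecture} in the paper (Serre's modularity conjecture over number fields, quoted from \c{S}eng\"un--Siksek), not a theorem: the paper supplies no proof and instead assumes it explicitly as a hypothesis in Theorem \ref{mainthm3}. There is therefore nothing in the paper to compare your argument against. Your proposal is not a proof either, and you say so yourself; what you give is an accurate survey of why the Khare--Wintenberger strategy does not currently close the problem over a field with complex places --- the absence of a Moret--Bailly-style potential modularity input, and the fact that Calegari--Geraghty-type modularity lifting over imaginary quadratic fields is conditional on attaching Galois representations to torsion classes in the cohomology of Bianchi groups. That diagnosis is correct and consistent with the paper's decision to treat the statement as an assumption. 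The only thing to flag is that your write-up should not be mistaken for a proof sketch that merely has gaps to fill: the first step (potential modularity over an imaginary quadratic field) is itself open, so the ``double induction'' never gets off the ground, and the honest conclusion --- which you reach --- is that the statement must remain a hypothesis.
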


We remark that since our aim in Theorem \ref{mainthm2:d=-3} and Theorem \ref{mainthm3} is to study $d$-Fermat equation over imaginary quadratic fields, one may wonder why the below result of Caraiani-Newton is not enough.

\begin{theorem}[{\cite[Theorem 1.1]{caraiani2023modularity}}]\label{modularity-imaginaryquadratic}
Let $F$ be an imaginary quadratic field such that the Mordell-Weil group $X_0(15)(F)$ is finite. Then every elliptic curve $E / F$ is modular.
\end{theorem}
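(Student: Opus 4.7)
The plan is to adapt the Wiles--Taylor modularity strategy to the imaginary quadratic setting, combining the $3$--$5$ switch with modularity lifting theorems for two-dimensional Galois representations over CM fields. The overall structure is: for a given elliptic curve $E/F$, first establish residual modularity of $\bar{\rho}_{E,\ell}$ for some small auxiliary prime $\ell \in \{3,5\}$, and then apply a suitable modularity lifting theorem to upgrade this to modularity of the full compatible system attached to $E$, which means that $E$ is associated to a Bianchi newform for $F$.

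For residual modularity at $\ell = 3$, when $\bar{\rho}_{E,3}$ is irreducible its image lies in the solvable group $\operatorname{GL}_2(\mathbb{F}_3)$, so one invokes Langlands--Tunnell via a solvable cyclic base change (automorphy being preserved by Arthur--Clozel) to exhibit $\bar{\rho}_{E,3}$ as arising from a Bianchi eigenform. When $\bar{\rho}_{E,3}$ is reducible, $E$ carries an $F$-rational $3$-isogeny, so $E$ corresponds to a non-cuspidal point on $X_0(3)(F)$, and one runs the $3$--$5$ switch: produce an auxiliary elliptic curve $E'/F$ sharing the same residual mod $5$ representation as $E$ but with $\bar{\rho}_{E',3}$ irreducible, via a Hilbert irreducibility argument applied to a suitable twist of the moduli space parameterizing elliptic curves with prescribed mod $5$ structure; prove modularity of $E'$ by the previous step; and transfer modularity back to $E$ through the common residual mod $5$ representation.

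The finiteness hypothesis on $X_0(15)(F)$ is precisely what is needed to close the remaining exceptional case in which $\bar{\rho}_{E,3}$ and $\bar{\rho}_{E,5}$ are both reducible. In that case $E$ has simultaneous $F$-rational $3$- and $5$-isogenies, equivalently $E$ gives a non-cuspidal $F$-rational point on $X_0(15)$; by hypothesis only finitely many $j$-invariants occur, and for each one can verify modularity individually, for instance by matching $L$-functions against an explicit Bianchi newform, or by recognizing $E$ as a CM curve whose modularity is classical. The main obstacle is the modularity lifting step itself: while residual modularity is accessible once the case split is arranged, modularity lifting over imaginary quadratic fields is a deep result requiring Galois-theoretic patching in the Calegari--Geraghty style, and the delicate hypotheses on big image, ramification, and local behavior at $\ell$ (Fontaine--Laffaille or ordinary, absolute irreducibility on $G_{F(\zeta_\ell)}$, vanishing of adjoint Selmer, etc.) must each be checked in every branch of the case analysis; organizing the argument so that some lifting theorem applies in each branch is where the bulk of the work lies.
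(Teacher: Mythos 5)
This statement is not proved in the paper at all: it is quoted verbatim from Caraiani--Newton (\cite{caraiani2023modularity}, Theorem 1.1) as an imported black box, and the author immediately explains that she does not even use it, preferring to assume Conjecture \ref{serrconj} instead because she needs explicit conductor and level control. So there is no ``paper's own proof'' to compare against; what you have written is a sketch of the proof in the cited reference. As such a sketch it is broadly faithful to the actual strategy: residual modularity at $3$ via Langlands--Tunnell, a $3$--$5$ switch using the rationality of the twisted modular curve of level $5$, modularity lifting over CM fields in the Calegari--Geraghty/ten-author-paper framework, and the finiteness of $X_0(15)(F)$ to dispose of the case where $\bar{\rho}_{E,3}$ and $\bar{\rho}_{E,5}$ are simultaneously reducible. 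Two points you compress deserve flagging, since they are where most of the difficulty lives. First, Langlands--Tunnell produces a weight-one (Artin) automorphic form, whereas the lifting theorems over imaginary quadratic fields require a congruence to a cohomological (weight-two Bianchi, possibly torsion) class; bridging that gap is nontrivial and is not a formal consequence of solvable base change. Second, ``transfer modularity back to $E$ through the common residual mod $5$ representation'' is itself an application of a lifting theorem at $p=5$, whose big-image and decomposed-generic hypotheses fail for further families of curves; handling these forces the analysis of additional modular curves beyond $X_0(15)$, which your case split does not account for. Neither issue is a flaw in your understanding of the architecture, but a proof at this level of detail establishes only the shape of the argument, not the theorem.
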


Although Theorem \ref{modularity-imaginaryquadratic} provides the modularity of elliptic curves over certain imaginary quadratic fields, which is a major step forward, modularity alone does not typically suffice for applying modular method arguments since one still needs a level-lowering result in the general number field setting. This is the reason why we assume Conjecture \ref{serrconj} in Theorem \ref{mainthm2:d=-3} and Theorem \ref{mainthm3}.

The following conjecture arises from the lack of the Eichler-Shimura theorem over general number fields, and also needs to be assumed in Theorem \ref{mainthm2:d=-3}.

\begin{conjecture}[{\cite[Conjecture 4.1]{csengun2018asymptotic}}]\label{fakellcurve}
Let $\mathfrak{f}$ be a (weight 2) complex eigenform over $K$ of level $\mathfrak{N}$ that is non-trivial and new. If $K$ has some real place, then there exists an elliptic curve $E_{\mathfrak{f}} / K$, of conductor $\mathfrak{N}$, such that

\begin{equation}\label{ellcurveconj}
 \# E_{\mathfrak{f}}\left(\mathbb{Z}_K / \mathfrak{q}\right)=1+\mathbf{N q}-\mathfrak{f}\left(T_{\mathfrak{q}}\right) \quad \text { for all } \mathfrak{q} \nmid \mathfrak{N} .   
\end{equation}

If $K$ is totally complex, then there exists either an elliptic curve $E_{\mathfrak{f}}$ of conductor $\mathfrak{N}$ satisfying (\ref{ellcurveconj}) or a fake elliptic curve $A_{\mathfrak{f}} / K$, of conductor $\mathfrak{N}^2$, such that

\begin{equation}\label{fakeellcurveconj}
\# A_{\mathfrak{f}}\left(\mathbb{Z}_K / \mathfrak{q}\right)=\left(1+\mathbf{N q}-\mathfrak{f}\left(T_{\mathfrak{q}}\right)\right)^2 \quad \text { for all } \mathfrak{q} \nmid \mathfrak{N} \text {. }
\end{equation}
\end{conjecture}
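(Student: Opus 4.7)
The plan is to follow the classical Eichler--Shimura strategy in two stages: first attach to the eigenform $\mathfrak{f}$ a compatible system of two-dimensional $\ell$-adic Galois representations $\rho_{\mathfrak{f},\ell}\colon G_K \to \mathrm{GL}_2(\overline{\QQ}_\ell)$, and then realize this system geometrically on an abelian variety defined over $K$. The required compatibility is that for every prime $\mathfrak{q}\nmid \mathfrak{N}\ell$ one has $\operatorname{tr}\rho_{\mathfrak{f},\ell}(\mathrm{Frob}_\mathfrak{q})=\mathfrak{f}(T_\mathfrak{q})$ and that the prime-to-$\ell$ Artin conductor equals $\mathfrak{N}$. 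For $K$ with a real place, this Galois representation can be produced by Jacquet--Langlands transfer of $\mathfrak{f}$ to a Hilbert modular form on a suitable quaternion algebra over the maximal totally real subfield, followed by the Blasius--Rogawski--Taylor construction. For $K$ totally complex, $\mathrm{GL}_2/K$ has no directly attached Shimura variety, so instead I would appeal to Scholze's construction via congruences between classes in the Betti cohomology of the Bianchi threefold $\Gamma_0(\mathfrak{N})\backslash\mathbb{H}_3$ and classes produced by automorphic induction to a CM extension sitting in unitary Shimura varieties.

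Second, I would realize $\rho_{\mathfrak{f},\ell}$ by an abelian variety. If $K$ has a real place, the image of $\mathfrak{f}$ in Hilbert modular cohomology gives an Eichler--Shimura quotient of the associated Jacobian whose $\ell$-adic Tate module is $\rho_{\mathfrak{f},\ell}$; rationality of the Hecke eigenvalues forces this quotient to be an elliptic curve $E_{\mathfrak{f}}/K$ of conductor $\mathfrak{N}$, and the Eichler--Shimura congruence relation yields the point-counting formula (\ref{ellcurveconj}). If $K$ is totally complex, the locally symmetric space $\Gamma_0(\mathfrak{N})\backslash \mathbb{H}_3$ is a real three-manifold with no algebraic structure, and no direct geometric source of $\rho_{\mathfrak{f},\ell}$ is available. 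The plan here bifurcates according to the endomorphism algebra of the putative motive: if it is $\QQ$, one expects an elliptic curve $E_{\mathfrak{f}}/K$ of conductor $\mathfrak{N}$; if it is a non-split quaternion algebra $B/\QQ$, one expects a fake elliptic curve $A_{\mathfrak{f}}/K$ obtained, via Jacquet--Langlands transfer to a Shimura curve attached to $B$, as a QM abelian surface whose conductor is $\mathfrak{N}^2$ since each prime of $\operatorname{disc}(B)$ contributes the square of its local conductor.

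The main obstacle is precisely this second step in the totally complex case. Even granting the construction of the compatible system $\{\rho_{\mathfrak{f},\ell}\}$ from Scholze's machinery, passing from a geometric Galois representation to an abelian variety over $K$ whose Tate module realizes it is a special instance of the Fontaine--Mazur conjecture that is not currently known outside very restricted settings (potential automorphy, CM lifts, etc.). Consequently, the proposal falls short of an unconditional proof; what it does accomplish is to clarify \emph{why} the dichotomy between elliptic and fake elliptic curves is the right one, and \emph{why} the conductor must become $\mathfrak{N}^2$ in the QM case. This is the reason the statement is recorded as a conjecture and must be assumed in the proof of Theorem \ref{mainthm3} for the imaginary quadratic fields.
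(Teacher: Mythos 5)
This statement is a conjecture (Conjecture 4.1 of \cite{csengun2018asymptotic}); the paper offers no proof of it and simply assumes it in the proof of Theorem \ref{mainthm3}, so there is no argument in the paper against which to compare yours. Your proposal is candid about this: it does not claim to prove the statement, and it correctly identifies the genuine obstruction, namely that even if one constructs the compatible system of Galois representations attached to a Bianchi eigenform (via Scholze's torsion results or the Harris--Lan--Taylor--Thorne constructions), there is no known way to realize such a representation on an abelian variety over a totally complex field --- this is a Fontaine--Mazur-type step that is open. Your heuristic for the elliptic/fake-elliptic dichotomy (endomorphism algebra $\QQ$ versus a non-split quaternion algebra) and for the conductor becoming $\mathfrak{N}^2$ in the quaternionic-multiplication case matches the motivation given in \cite{csengun2018asymptotic}. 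One small caution: in the case where $K$ has a real place the statement is still conjectural in general (modularity lifting and Eichler--Shimura over totally real fields require hypotheses such as the level being divisible by a squarefull part, as the paper itself remarks), so even that half of your sketch should not be presented as a complete proof. As a discussion of why the conjecture is formulated the way it is, your text is accurate; as a proof, it is, by your own admission and by the current state of the art, necessarily incomplete.
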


\section{Frey curve attached to \texorpdfstring{$d$}{d}--Fermat equation}

\subsection{Notation}

Let $d=-43, -19, -11,-3,3,5,7,11,13,19,23$. Let $K=\QQ(\sqrt{d})$ and denote the ring of integers of $K$ by $\mcalO_K$. 

Throughout the paper, we refer to the equation
\begin{equation*}\label{GFE}
    d^ra^p+b^p+c^p=0
\end{equation*} as \textit{$d$-Fermat equation of exponent $p$}. Recall that a solution $(a, b, c) \in K^3$ of Equation (\ref{GFE}) is called trivial if $abc = 0$ and non-trivial otherwise. A solution $(a, b, c)$ of Equation (\ref{GFE}) is called integral if $(a, b, c) \in \mcalO_K^3$. Finally, when $(a, b, c)$ is a non-trivial solution of Equation (\ref{GFE}), the triple $(da,b,c)$ called primitive if $da$, $b$, and $c$ are pairwise coprime. Note that since we are considering the number fields that have class number one, it is possible to scale the solution $(a, b, c)$ to remain integral and primitive.

Note that $2$ is ramified in $K$ when $d=3,7,11,19,23$ and is inert in $K$ otherwise, i.e, irrespective of the choice of $d$, there is a unique prime above $2$ in $K$. We will denote the unique prime above $2$ by $\mfp$.

When $d=3,7,11,19,23$, we have $2\mcalO_K=\mfp^2$. It follows that $\mcalO_K/\mfp \cong \mathbb{F}_2$. Since $\gcd(da,b,c)=1$ and $d$ is odd, the prime $\mfp$ divides precisely one of $a$, $b$ and $c$. On the other hand, the residue field of $2$ is not $\mathbb{F}_2$ when $d\ne 3,7,11,19,23$ since $2$ is inert in $K$ in this case. Recall that we are considering the solutions $(a,b,c)\in\mcalO_K^3$ such that $2\mid abc$ and $(da,b,c)$ is primitive. Therefore, we again obtain that $\mfp=2\mcalO_K$ divides precisely one of $a$, $b$ and $c$ when $d\ne 3,7,11,19,23$. We remark that this is the only reason why we had to assume $2\mid abc$ in the latter cases. In all cases, without loss of generality, we will always assume $\mfp\mid b$.

\subsection{Semistability of the Frey curve}
Let $(a,b,c)\in\mcalO_K^3$ be a non-trivial solution of the $d$-Fermat equation (\ref{dfermat}) such that $(da,b,c)$ is primitive or $(da,b,c)$ is primitive and $2\mid abc$, depending on which case we are considering in Theorem \ref{mainthm1}, Theorem \ref{mainthm2:d=-3} and Theorem \ref{mainthm3}. The Frey curve attached to this solution is given by

\begin{equation}\label{frey}
   \text{$E:=E_{a,b,c}$:      } y^2=x(x-d^r a^p)(x+b^p)
\end{equation}.

and it has the following invariants:

\begin{align*}
   \Delta_E &=2^{4}d^{2r}(abc)^{2p}\\
c_4 &= 2^4(b^{2p}-d^ra^pc^p)\\
c_6 &= -2^5(d^ra^p-b^p)(b^p-c^p)(c^p-d^ra^p)\\
j_E&=\frac{c^3_4}{\Delta_E}=2^8\frac{(b^{2p}-d^ra^pc^p)^3}{d^{2r}(abc)^{2p}}
\end{align*}

We start by discussing the semistability of the elliptic curve $E$, validating the choice of considering special solutions in Theorem \ref{mainthm1}, Theorem \ref{mainthm2:d=-3} and Theorem \ref{mainthm3}.

\begin{lemma}\label{semsq}
The Frey curve $E$ is semistable at primes above $d$.   
\end{lemma}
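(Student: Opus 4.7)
The plan is to work at the unique prime $\mathfrak{d}$ of $K$ above $d$ and verify directly, using the explicit Weierstrass model (\ref{frey}), that $E$ is already minimal with multiplicative reduction at $\mathfrak{d}$. Since $|d|$ is an odd rational prime and $K=\QQ(\sqrt{d})$, the prime $d$ ramifies in $K$: there is a unique $\mathfrak{d}\subset\mcalO_K$ above $d$ with $d\mcalO_K=\mathfrak{d}^2$, so $v_{\mathfrak{d}}(d)=2$ and $v_{\mathfrak{d}}(2)=0$. The primitivity hypothesis on $(da,b,c)$ forces $\gcd(da,b)=\gcd(da,c)=1$, hence $v_{\mathfrak{d}}(b)=v_{\mathfrak{d}}(c)=0$ (there is no constraint on $v_{\mathfrak{d}}(a)$, which will not matter).

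The next step is to exploit the formula $c_4=2^4(b^{2p}-d^r a^p c^p)$. The term $b^{2p}$ is a unit at $\mathfrak{d}$, while $v_{\mathfrak{d}}(d^r a^p c^p)\ge 2r\ge 2$, so their difference is a unit and $v_{\mathfrak{d}}(c_4)=0$. Since $c_4$ transforms as $u^{-4}c_4$ under the usual admissible change of coordinates, any rescaling with $v_{\mathfrak{d}}(u)\ge 1$ would make $c_4$ non-integral and so the new model non-integral as well. Hence the equation (\ref{frey}) is already minimal at $\mathfrak{d}$, and this observation is insensitive to the residue characteristic, which is the only point one needs to check in the exceptional case $d=\pm 3$ where the residue field has characteristic $3$.

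Finally, I would read off the reduction type. Reducing (\ref{frey}) modulo $\mathfrak{d}$ and using $r\ge 1$, the middle root collapses to $0$, giving the nodal cubic
\[
\bar E:\ y^2=x^2(x+\bar b^{\,p}),
\]
where the third root $-\bar b^{\,p}$ is nonzero since $v_{\mathfrak{d}}(b)=0$. A nodal cubic on a minimal model is precisely the signature of multiplicative reduction of type $I_n$ (with $n=v_{\mathfrak{d}}(\Delta_E)=4r+2p\,v_{\mathfrak{d}}(a)$), so $E$ is semistable at $\mathfrak{d}$. The only delicate step is the minimality verification: once that is secured via the $v_{\mathfrak{d}}(c_4)=0$ calculation, the Kodaira type is immediate from the shape of the reduced cubic and no invocation of Tate's algorithm is needed.
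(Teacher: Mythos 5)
Your proof is correct and follows essentially the same route as the paper: the key point in both is that primitivity of $(da,b,c)$ forces $v_{\mathfrak{d}}(b)=0$ while $v_{\mathfrak{d}}(d^ra^pc^p)>0$, so $v_{\mathfrak{d}}(c_4)=0$ and the reduction at $\mathfrak{d}$ is multiplicative. Your explicit minimality check and the nodal reduction $y^2=x^2(x+\bar b^{\,p})$ merely unpack the standard criterion that an integral Weierstrass model with $v(c_4)=0$ and $v(\Delta)>0$ is minimal with multiplicative reduction, which the paper invokes implicitly.
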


\begin{proof}
  Let $\mathfrak{p}$ be a prime above $d$. Clearly, $\mathfrak{p}\mid \Delta_E$. Since $(da,b,c)$ is a primitive solution by assumption, we have $$v_{\mathfrak{p}}(c_4)=\min(2pv_{\mathfrak{p}}(b), p(v_{\mathfrak{p}}(a)+v_{\mathfrak{p}}(c))+r)=0,$$so that $E$ has multiplicative reduction at $\mathfrak{p}$ hence is semistable at $\mathfrak{p}$.
\end{proof}

\begin{lemma}\label{Esem}
 E is semistable away from $\mfp$.
\end{lemma}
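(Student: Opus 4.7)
The plan is to essentially repeat the calculation of Lemma \ref{semsq} for every prime of bad reduction outside $\mathfrak{p}$, using the primitivity hypothesis on $(da,b,c)$ together with the explicit formulas for $\Delta_E$ and $c_4$ given above. A prime $\mathfrak{q}$ of $K$ at which $E$ has bad reduction must divide $\Delta_E = 2^4 d^{2r}(abc)^{2p}$, so $\mathfrak{q}$ lies above $2$, above $d$, or divides $abc$. The first case is excluded by assumption ($\mathfrak{q}\neq \mathfrak{p}$), and the second case is already handled by Lemma \ref{semsq}. So it only remains to treat primes $\mathfrak{q}\nmid 2d$ with $\mathfrak{q}\mid abc$.

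For such a $\mathfrak{q}$, primitivity of $(da,b,c)$ implies that $\mathfrak{q}$ divides exactly one of $a$, $b$, $c$. I would split into the three corresponding cases and compute $v_{\mathfrak{q}}(c_4)$ using $c_4=2^4(b^{2p}-d^r a^p c^p)$. If $\mathfrak{q}\mid a$, then $v_{\mathfrak{q}}(b)=v_{\mathfrak{q}}(c)=0$ and $v_{\mathfrak{q}}(d^r a^p c^p)\geq p\geq 1$, while $v_{\mathfrak{q}}(b^{2p})=0$, giving $v_{\mathfrak{q}}(c_4)=0$; the case $\mathfrak{q}\mid c$ is symmetric; and if $\mathfrak{q}\mid b$ then $v_{\mathfrak{q}}(d^r a^p c^p)=0$ while $v_{\mathfrak{q}}(b^{2p})\geq 2p$, so again $v_{\mathfrak{q}}(c_4)=0$. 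Combined with $v_{\mathfrak{q}}(\Delta_E)>0$ this forces $v_{\mathfrak{q}}(j_E)<0$, so $E$ has multiplicative (hence semistable) reduction at $\mathfrak{q}$.

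There is no real obstacle here: the argument is a short case analysis that mirrors the proof of Lemma \ref{semsq}, the only subtlety being to use that $\mathfrak{q}\nmid 2d$ so that the factor $2^4 d^{2r}$ in $c_4$ does not contribute to $v_{\mathfrak{q}}(c_4)$, and to invoke primitivity to ensure $\mathfrak{q}$ divides at most one of $a,b,c$. Combining this case analysis with Lemma \ref{semsq} and excluding $\mathfrak{p}$ yields the claim.
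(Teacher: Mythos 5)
Your proposal is correct and follows essentially the same route as the paper's proof: reduce to primes $\mathfrak{q}\mid\Delta_E$, dispatch $\mathfrak{q}\mid d$ via Lemma \ref{semsq}, and for $\mathfrak{q}\mid abc$ with $\mathfrak{q}\nmid 2d$ use coprimality to conclude $v_{\mathfrak{q}}(c_4)=0$, hence multiplicative reduction. Your explicit three-way case split is just a slightly more verbose version of the paper's one-line observation that $\mathfrak{q}$ divides at most one of $a,b,c$ and therefore $\mathfrak{q}\nmid(b^{2p}-d^ra^pc^p)$.
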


\begin{proof}
Let $\mathfrak{p}\ne \mfp$ be a prime of $K$. Recall that $\Delta_E = 2^{4}d^{2r}(abc)^{2p}$ and $c_4 = 2^4(b^{2p}-d^ra^pc^p)$. Assume $\mathfrak{p}\mid \Delta_E$. Then either $\mathfrak{p}\mid d$ or $\mathfrak{p}\mid abc$. If $\mathfrak{p}\mid d$, then by Lemma \ref{semsq}, we know that $E$ is semistable at $\mathfrak{p}$. If $\mathfrak{p}\mid abc$, then observe that $\mathfrak{p}$ divides at most one of $a,b,c$. Indeed, if $\mathfrak{p} $ divides two of them, then the relation $d^ra^p+b^p+c^p=0$ yields $\mathfrak{p} =\mcalO_K$, which is not possible. Therefore, $\mathfrak{p}\nmid (b^{2p}-d^ra^pc^p)$, i.e. $v_{\mathfrak{p}}(c_4)=0$ and $E$ is semistable at $\mathfrak{p}$. 
\end{proof}

The following lemma is needed when showing the irreducibility of the mod $p$ representation attached to curve $E$ and at the stage of newform elimination. First, we deal with the case where either residual degree of $\mfp$ in $K$ is $1$ or $2$ is inert in $K$ with $2\mid abc$. The remaining case will be treated separately at the end of the section.

\begin{lemma}\label{potmultb}
Let $K$ be a quadratic field and let $\mfp\mid 2$. Assume that either the residual degree of the prime $\mfp$ in $K$ is one or $2$ is inert in $K$ with $2\mid abc$. The Frey curve $E/K$ has potentially multiplicative reduction at the prime $\mfp$.
\end{lemma}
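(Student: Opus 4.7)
The strategy is to invoke the standard criterion that an elliptic curve over a local field has potentially multiplicative reduction at a prime exactly when its $j$-invariant has negative valuation there. Thus the goal reduces to showing $v_{\mfp}(j_{E}) < 0$, for which I would rely on the explicit formula for $j_{E}$ recorded in the previous subsection.

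The first step is a valuation count. Since $d$ is odd and $\mfp \mid 2$, $v_{\mfp}(d) = 0$; since $(da,b,c)$ is primitive and $\mfp \mid b$, I also have $v_{\mfp}(a) = v_{\mfp}(c) = 0$ and $v_{\mfp}(b) \geq 1$. Consequently $v_{\mfp}(b^{2p}) \geq 2p$ while $v_{\mfp}(d^{r}a^{p}c^{p}) = 0$, so $v_{\mfp}(b^{2p} - d^{r}a^{p}c^{p}) = 0$. Feeding this into the expressions for $c_{4}$ and $\Delta_{E}$ gives $v_{\mfp}(c_{4}) = 4v_{\mfp}(2)$ and $v_{\mfp}(\Delta_{E}) = 4v_{\mfp}(2) + 2pv_{\mfp}(b)$, hence
$$v_{\mfp}(j_{E}) \;=\; 8\, v_{\mfp}(2) - 2p\, v_{\mfp}(b).$$

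Then I would split by how $2$ behaves in $K$. When the residue field at $\mfp$ is $\mathbb{F}_{2}$ and $2$ splits, or when $2$ is inert with $2 \mid abc$, one has $v_{\mfp}(2) = 1$ and the inequality gives $v_{\mfp}(j_{E}) \leq 8 - 2p < 0$ for every $p \geq 5$; these cases are painless. The genuinely delicate branch, and the main obstacle, is the ramified case (namely $d \in \{3,7,11,19,23\}$ among the data considered), where $v_{\mfp}(2) = 2$ and the computation only yields $v_{\mfp}(j_{E}) = 16 - 2p\, v_{\mfp}(b)$, which is not manifestly negative when $v_{\mfp}(b) = 1$ and $p$ is small.

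I would close this gap in two possible ways. The quickest is to note that the hypotheses of Theorem \ref{mainthm1} already force $p \geq 11$ in every ramified case considered, which immediately gives $16 - 2p\, v_{\mfp}(b) \leq -6 < 0$. An alternative that would make the lemma uniform in $p \geq 5$ is to exploit $d \equiv 1 \pmod{\mfp^{2}}$ together with $x^{p} \equiv x \pmod{\mfp^{2}}$ for every $\mfp$-adic unit $x$, a consequence of $2 \in \mfp^{2}$ forcing $x^{2} \equiv 1 \pmod{\mfp^{2}}$ whenever $x \equiv 1 \pmod{\mfp}$. Reducing $d^{r}a^{p} + c^{p} = -b^{p}$ modulo $\mfp^{2}$ then yields $a + c \equiv 0 \pmod{\mfp^{2}}$, and pushing the expansion to higher powers of $\mfp$, while using that $v_{\mfp}(d^{r}a^{p} + c^{p}) = p\, v_{\mfp}(b)$ must be a positive multiple of $p$, would pin $v_{\mfp}(b)$ down to at least $2$ and thereby make the inequality unconditional.
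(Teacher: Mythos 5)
Your proposal is correct and follows essentially the same route as the paper: both compute $v_{\mfp}(j_E)=8v_{\mfp}(2)-2pv_{\mfp}(b)$ from the explicit invariants and conclude negativity from the lower bounds on $p$ assumed in the main theorems ($p>5$ when $2$ is unramified, $p>11$ when $2$ is ramified). Your extra sketched alternative for making the ramified case uniform in $p\ge 5$ is not needed and is not fully verified, but the primary argument is complete and matches the paper's proof.
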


\begin{proof}
    It suffices to show that $v_{\mfp}(j_E)<0$ by \cite[Proposition VII.5.5]{silverman2009arithmetic}. Recall that, in all cases, we assumed $\mfp\mid b$. Then, $v_{\mfp}(b)\ge1$. Observe that 
    \begin{eqnarray*}
    v_{\mfp}(c_4)&=&v_{\mfp}(2^4(b^{2p}-d^ra^pc^p))\\
    &=&4v_{\mfp}(2)+v_{\mfp}(b^{2p}-d^ra^pc^p)\\
    &=&4v_{\mfp}(2)+\min(2pv_{\mfp}(b), p(v_{\mfp}(a)+v_{\mfp}(c)))\\
    &=&4v_{\mfp}(2)
\end{eqnarray*}and that
    \begin{eqnarray*}
    v_{\mfp}(\Delta_E)&=& v_{\mfp}(2^4d^{2r}(abc)^{2p})\\
    &=&4v_{\mfp}(2)+2rv_{\mfp}(d)+2p(v_{\mfp}(a)+v_{\mfp}(b)+v_{\mfp}(c))\\
    &=&4v_{\mfp}(2)+2pv_{\mfp}(b)
    \end{eqnarray*}Therefore,$$v_{\mfp}(j_E)=3v_{\mfp}(c_4)-v_{\mfp}(\Delta_E)=8v_{\mfp}(2)-2pv_{\mfp}(b)<0$$since in every case, we assumed $p> 5$ when $2$ is inert and $p>7$ when $2$ is ramified in $K$.
\end{proof}

\begin{lemma}\label{pdivdel}
Let $p$ be a rational prime. Then, $p\mid v_{\mathfrak{p}}(\Delta_E)$ for any prime ideal $\mathfrak{p}$ of $K$ such that $\mathfrak{p}\nmid 2d$.    
\end{lemma}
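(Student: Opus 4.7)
The plan is to observe that this is essentially a direct valuation computation from the explicit formula $\Delta_E = 2^4 d^{2r}(abc)^{2p}$ recorded just after the definition of the Frey curve. Since valuations are additive on products, I would simply expand
\[
v_{\mathfrak{p}}(\Delta_E) \;=\; 4\, v_{\mathfrak{p}}(2) \;+\; 2r\, v_{\mathfrak{p}}(d) \;+\; 2p\bigl(v_{\mathfrak{p}}(a) + v_{\mathfrak{p}}(b) + v_{\mathfrak{p}}(c)\bigr).
\]

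Then I would invoke the hypothesis $\mathfrak{p} \nmid 2d$, which forces $v_{\mathfrak{p}}(2) = 0$ and $v_{\mathfrak{p}}(d) = 0$. The first two terms drop out, leaving
\[
v_{\mathfrak{p}}(\Delta_E) \;=\; 2p\bigl(v_{\mathfrak{p}}(a) + v_{\mathfrak{p}}(b) + v_{\mathfrak{p}}(c)\bigr),
\]
which is manifestly divisible by $p$ (in fact by $2p$), as required.

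There is no genuine obstacle here: the lemma is purely bookkeeping used to verify hypothesis (\ref{item:ii-levellowering}) of Theorem \ref{levellow} away from the primes above $2d$, and the coprimality of $(da,b,c)$ (which would matter for showing the valuations of $a,b,c$ are not simultaneously zero) is not even needed, since we only need divisibility by $p$, not strict positivity. The one thing to be slightly careful about is that the formula for $\Delta_E$ already stated in the paper is the discriminant of the model (\ref{frey}); away from $2d$ this model is automatically minimal at $\mathfrak{p}$ (since $v_{\mathfrak{p}}(c_4)$ and $v_{\mathfrak{p}}(\Delta_E)$ cannot simultaneously be pushed down by a change of variables when the valuations arise only from $abc$), so $v_{\mathfrak{p}}(\Delta_\mathfrak{p})$ for the local minimal model agrees with the value computed above. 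Thus the lemma follows with no further work.
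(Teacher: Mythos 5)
Your proof is correct and is essentially identical to the paper's: expand $v_{\mathfrak{p}}(\Delta_E)=4v_{\mathfrak{p}}(2)+2rv_{\mathfrak{p}}(d)+2pv_{\mathfrak{p}}(abc)$ and kill the first two terms using $\mathfrak{p}\nmid 2d$. Your added remark on minimality of the model away from $2d$ is a valid (and welcome) extra check that the paper leaves implicit.
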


\begin{proof}
    Let $\mathfrak{p}\nmid 2d$ be a prime of $K$. Observe that 
    \begin{align*}
      v_{\mathfrak{p}}(\Delta_E)&=4v_{\mathfrak{p}}(2)+2rv_{\mathfrak{p}}(d)+2pv_{\mathfrak{p}}(abc)\\
      &=2pv_{\mathfrak{p}}(abc).
    \end{align*}Therefore, $p\mid v_{\mathfrak{p}}(\Delta_E)$.
\end{proof}

From Lemma \ref{potmultb}, observe that the elliptic curve $E$ has potentially multiplicative reduction at $\mfp$ when we are in one of the cases in Theorem \ref{mainthm1} or one of the cases in Theorem \ref{mainthm3} where the condition $2\mid abc$ is assumed. Now, we will determine the exponent of the prime $\mfp$ in the conductor $N_E$ of the elliptic curve $E$ for those cases. We will closely follow the arguments in \cite[Lemma 4.3 \& Lemma 4.4]{freitas2015fermat}. Although the aforementioned work is based on the classical Fermat equation over real quadratic fields, the computations done in these lemmas are purely local and applicable to Frey curves corresponding to the solutions of generalized Fermat equations and to those of Fermat-type equations over imaginary quadratic fields. For the sake of the reader, we first state \cite[Lemma 4.3]{freitas2015fermat}:

\begin{lemma}[{\cite[Lemma 4.3]{freitas2015fermat}}]\label{samir4.3}
    Let $K_{\mathfrak{P}}$ be a local field, and $E$ be an elliptic curve over $K_{\mathfrak{P}}$ with potentially multiplicative reduction. Let $c_4, c_6$ be the usual c-invariants of E. Let $L=K_{\mathfrak{P}}\left(\sqrt{-c_6 / c_4}\right)$ and let $\Delta\left(L / K_{\mathfrak{P}}\right)$ be the discriminant of this local extension. Then the conductor of $E / K_{\mathfrak{P}}$ is
$$
f\left(E / K_{\mathfrak{P}}\right)= \begin{cases}1 & \text { if } v_{\mathfrak{P}}\left(\Delta\left(L / K_{\mathfrak{P}}\right)\right)=0 \\ 2 v_{\mathfrak{P}}\left(\Delta\left(L / K_{\mathfrak{P}}\right)\right) & \text { otherwise. }\end{cases}
$$
\end{lemma}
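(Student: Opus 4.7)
The plan is to reduce the computation of $f(E/K_{\mathfrak{P}})$ to a conductor computation for a quadratic character by exploiting Tate's rigid-analytic uniformization. Because $E$ has potentially multiplicative reduction, $v_{\mathfrak{P}}(j_E)<0$ and Tate's theorem produces a unique $q\in K_{\mathfrak{P}}^{\times}$ with $v_{\mathfrak{P}}(q)>0$ together with a uniformization $\bar{K}_{\mathfrak{P}}^{\times}/q^{\mathbb{Z}}\xrightarrow{\sim}E(\bar{K}_{\mathfrak{P}})$ that becomes $G_{K_{\mathfrak{P}}}$-equivariant after twisting by a unique quadratic character $\chi\colon G_{K_{\mathfrak{P}}}\to\{\pm 1\}$. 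Let $L$ be the fixed field of $\ker\chi$; then $L=K_{\mathfrak{P}}$ in the split multiplicative case, $L$ is the unramified quadratic extension in the non-split multiplicative case, and $L$ is a ramified quadratic extension in the genuinely potentially multiplicative case.

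The first concrete task is to identify $L$ with $K_{\mathfrak{P}}(\sqrt{-c_6/c_4})$. This is a short Weierstrass-model computation: under a quadratic twist by $d$ the invariants transform as $c_4\mapsto d^2c_4$ and $c_6\mapsto d^3c_6$, so the square class of $-c_6/c_4$ picks out precisely the twist sending $E$ to a curve with split multiplicative reduction. Hence $E$ acquires split multiplicative reduction exactly over $L=K_{\mathfrak{P}}(\sqrt{-c_6/c_4})$, and this is the quadratic extension cut out by $\chi$.

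With $\chi$ in hand, the conductor of $E$ can be read off from its $\ell$-adic Galois representation for any prime $\ell$ coprime to the residue characteristic. Writing $\rho_{E,\ell}\simeq \rho_{E_q,\ell}\otimes\chi$, where $\rho_{E_q,\ell}=\left(\begin{smallmatrix}\psi_1&\ast\\0&\psi_2\end{smallmatrix}\right)$ has unramified diagonal, the inertia-invariant subspace and Swan conductor of the tensor product are controlled by $\chi$. If $\chi$ is unramified then $V^{I_{\mathfrak{P}}}$ is one-dimensional and $f(E/K_{\mathfrak{P}})=1$, matching the case $v_{\mathfrak{P}}(\Delta(L/K_{\mathfrak{P}}))=0$. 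If $\chi$ is ramified then $V^{I_{\mathfrak{P}}}=0$ and additivity of the Swan conductor yields
\[
a(\rho_{E,\ell})=2+\mathrm{Sw}(\chi\psi_1)+\mathrm{Sw}(\chi\psi_2)=2+2\,\mathrm{Sw}(\chi)=2\,a(\chi).
\]
Combining this with the classical conductor--discriminant identity $a(\chi)=v_{\mathfrak{P}}(\Delta(L/K_{\mathfrak{P}}))$ for a quadratic character produces the stated formula.

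The main obstacle will be the wildly ramified case $\mathfrak{P}\mid 2$, where quadratic extensions of $K_{\mathfrak{P}}$ can be wildly ramified, $\mathrm{Sw}(\chi)$ is genuinely positive, and both the identification $L=K_{\mathfrak{P}}(\sqrt{-c_6/c_4})$ and the conductor--discriminant step must be carried out without the tame shortcut, with careful attention to $v_{\mathfrak{P}}(2)$. Since this is exactly the regime needed elsewhere in the paper for $\mathfrak{p}\mid 2$, the bookkeeping for wildly ramified quadratic twists is where the substance of the lemma really sits.
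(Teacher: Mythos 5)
The paper does not prove this statement at all: it is imported verbatim from Freitas--Siksek as \cite[Lemma~4.3]{freitas2015fermat}, so there is no in-paper argument to compare against. Your proof is correct and is essentially the standard one underlying that reference: Tate uniformization exhibits $E$ as the quadratic twist of a Tate curve by the character $\chi$ cutting out $K_{\mathfrak{P}}(\sqrt{-c_6/c_4})$; the unramified-$\chi$ case gives $\dim V^{I_{\mathfrak{P}}}=1$ and conductor exponent $1$; in the ramified case the semisimplification of $V\otimes\chi$ on inertia is $\chi|_{I_{\mathfrak{P}}}^{\oplus 2}$, so $V^{I_{\mathfrak{P}}}=0$, the Swan conductor (which depends only on the restriction to wild inertia, where the module is semisimple) is $2\,\mathrm{Sw}(\chi)$, and the conductor--discriminant formula for the quadratic extension $L/K_{\mathfrak{P}}$ gives $a(\chi)=v_{\mathfrak{P}}(\Delta(L/K_{\mathfrak{P}}))$. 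One remark: your closing paragraph treats the residue characteristic $2$ case as an outstanding obstacle, but the argument you have already written handles it --- Tate's theory and the identification of the twisting character via the square class of $-c_6/c_4$ are valid in residue characteristic $2$, and the Swan-conductor and conductor--discriminant steps are exactly the wild-ramification-robust formulation; no further bookkeeping is needed.
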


Recall from Lemma \ref{potmultb} that our Frey curve $E$ has potentially multiplicative reduction at the prime $\mfp$. We will state and prove a version of \cite[Lemma 4.4]{freitas2015fermat} that is applied to our case.

\begin{lemma}\label{samir4.4}
    Let $K$ be a quadratic field and let $\mfp\mid 2$. Assume that either the residual degree of the prime $\mfp$ in $K$ is $1$ or $2$ is inert in $K$ with $2\mid abc$. Let $(a,b,c)$ be a non-trivial solution to the $d$-Fermat equation $(\ref{dfermat})$. Define the ideal $\mathfrak{b}=\prod_{\substack{\mfp \mid 2  }}\mfp^{2v_{\mfp}(2)+1}$ and let $$
\Phi: \mathcal{O}_K^{\times} \rightarrow\left(\mathcal{O}_K / \mathfrak{b}\right)^{\times} /\left(\left(\mathcal{O}_K / \mathfrak{b}\right)^{\times}\right)^2
$$
be the natural map. Choose elements $\lambda_1, \ldots, \lambda_k \in \mathcal{O}_K \backslash \mathfrak{b}$ that represent the elements of the cokernel of $\Phi$. For $1 \leq i \leq k$, let $\Delta_{\mfp}^{(i)}$ be the discriminant of the local extension $K_{\mfp}\left(\sqrt{\lambda_i}\right) / K_{\mfp}$, and let
$$
\epsilon_{\mfp}^{(i)}= \begin{cases}1 & \text { if } v_{\mfp}\left(\Delta_{\mfp}^{(i)}\right)=0 \\ 2 v_{\mfp}\left(\Delta_{\mfp}^{(i)}\right) & \text { otherwise. }\end{cases}
$$

Then we may scale $(a, b, c)$ by an element of $\mathcal{O}_K^{\times}$ so that for some $i$, and every prime $\mfp$ above $2$, we have $v_{\mfp}(N_E)=\epsilon_{\mfp}^{(i)}$.
\end{lemma}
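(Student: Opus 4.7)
By Lemma \ref{potmultb}, the Frey curve $E$ has potentially multiplicative reduction at every prime $\mfp$ above $2$, so Lemma \ref{samir4.3} reduces the computation of $v_{\mfp}(N_E)$ to identifying the quadratic extension $K_{\mfp}(\sqrt{-c_6/c_4})/K_{\mfp}$; equivalently, to identifying the class of $-c_6/c_4$ in $K_{\mfp}^\times/(K_{\mfp}^\times)^2$. The plan is to show that this class can always be matched with that of one of the $\lambda_i$ after scaling $(a,b,c)$ by an appropriate unit of $\mcalO_K$.

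The first step is to make the $c$-invariants of $E$ explicit. From $E: y^2 = x(x-A)(x+B)$ with $A=d^r a^p$ and $B=b^p$, a direct computation gives $-c_6 = 32(B-A)(2A+B)(A+2B)$, and using the Fermat relation $A+B=-c^p$ this factors as $32(b^p-d^r a^p)(d^r a^p - c^p)(b^p - c^p)$, while $c_4=16(b^{2p}-d^r a^p c^p)$. Using primitivity of $(da,b,c)$, the relation $d^r a^p + b^p + c^p = 0$, and the assumption $\mfp \mid b$, I would verify that the class of $-c_6/c_4$ in $K_{\mfp}^\times/(K_{\mfp}^\times)^2$ admits a representative $\gamma \in \mcalO_K$ that is coprime to $\mathfrak{b}$ (i.e. a $\mfp$-adic unit), so that $\gamma$ defines a well-defined class in $(\mcalO_K/\mathfrak{b})^\times/((\cdot))^2$.

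Next I would analyse how the substitution $(a,b,c) \mapsto (ua,ub,uc)$ with $u \in \mcalO_K^\times$ affects $E$. The change of variables $x = u^p X$, $y = u^{3p/2} Y$ exhibits the scaled Frey curve as the quadratic twist $E^{(u^p)}$ of $E$, whose invariants satisfy $c_4(E^{(u^p)}) = u^{2p} c_4(E)$ and $c_6(E^{(u^p)}) = u^{3p} c_6(E)$; since $p$ is odd, $u^p \equiv u \pmod{(K^\times)^2}$, so the square class of $-c_6/c_4$ is multiplied by $u$. The key local input is then standard: for a quadratic field $K$ and a prime $\mfp \mid 2$, the square class of a $\mfp$-adic unit is determined by its reduction modulo $\mfp^{2v_{\mfp}(2)+1}$, a Hensel-type statement, and this is precisely the exponent appearing in the definition of $\mathfrak{b}$. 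Hence the class of $-c_6/c_4$ in $K_{\mfp}^\times/(K_{\mfp}^\times)^2$ is detected by the image of $\gamma$ in $(\mcalO_K/\mathfrak{b})^\times/((\cdot))^2$, and scaling by $u$ modifies that image by $\Phi(u)$.

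Choosing $u$ so that $\Phi(u) \cdot [\gamma]$ lies in the coset of some $\lambda_i$, we obtain $K_{\mfp}(\sqrt{-c_6/c_4}) = K_{\mfp}(\sqrt{\lambda_i})$ at every prime above $2$, and Lemma \ref{samir4.3} yields $v_{\mfp}(N_E) = \epsilon_{\mfp}^{(i)}$. The main technical obstacle is confirming that $\gamma$ is genuinely a $\mfp$-adic unit so that its reduction mod $\mathfrak{b}$ really controls the square class at $\mfp$; this requires a careful valuation analysis at $\mfp$ using $\mfp \mid b$ and the Fermat relation to rule out cancellation in $-c_6$ and $c_4$ separately. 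A secondary (easier) point is that the identification of the scaled curve with a twist is only an isomorphism over $K$ and requires an auxiliary square root of $u$, but this suffices since the local conductor is an isomorphism invariant.
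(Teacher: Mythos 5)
Your proposal is correct and follows essentially the same route as the paper's proof: reduce to Lemma \ref{samir4.3} via potentially multiplicative reduction, check that $-c_6/c_4$ differs by the square $4$ from a $\mfp$-adic unit $\gamma$, note that scaling $(a,b,c)$ by $u$ multiplies the square class of $\gamma$ by that of $u$ since $p$ is odd, and use Hensel/CRT to match $\gamma$ with a cokernel representative $\lambda_i$ modulo $\mathfrak{b}$. The only cosmetic difference is that the paper reads off $c_4'=u^{2p}c_4$, $c_6'=u^{3p}c_6$ directly from the Weierstrass model rather than phrasing the scaling as a quadratic twist, which sidesteps your auxiliary square root of $u$.
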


\begin{proof}
    Write $\mcalO:=\mcalO_K$. The above computations on the valuation of the invariants $c_4$, and $c_6$ show that $v_{\mfp}(c_4)=4v_{\mfp}(2)$, and $v_{\mfp}(c_6)=6v_{\mfp}(2)$. Define $\gamma:=\frac{-c_6}{4c_4}$ and observe that $v_{\mfp}(\gamma)=0$. Therefore, $\gamma\in \mcalO_{\mfp}^{\times}$ and $K_{\mfp}(\sqrt{\gamma})=K_{\mfp}(\sqrt{-c_6/c_4})$. Since $E$ has potentially multiplicative reduction at $\mfp$, Lemma \ref{samir4.3} is applicable, from which it follows that $v_{\mfp}(N_E)$ can be expressed in terms of the discriminant of the extension $K_{\mfp}(\sqrt{\gamma})/K_{\mfp}$. From Chinese remainder theorem and Hensel's lemma, we have the isomorphism $$\left(\mathcal{O}_K / \mathfrak{b}\right)^{\times} /\left(\left(\mathcal{O}_K / \mathfrak{b}\right)^{\times}\right)^2 \cong \prod_{\substack{\mathfrak{\mfp} \in S  }} \mcalO_{\mfp}^{\times} / (\mcalO_{\mfp}^{\times})^2.$$Observe that if we scale the solution $(a,b,c)$ by a unit $\alpha \in \mcalO^{\times}$, then the Frey curve attached to the corresponding solution has invariants $c_4^{\prime}=\alpha^{2p}c_4$ and $c_6^{\prime}=\alpha^{p}c_6$. This shows that scaling the solution $(a,b,c)$ by $\alpha$ results in scaling $\gamma$ by $\alpha^p$. Since $p$ is odd, it follows
from the definition of $\Phi$ and the above isomorphism that we can scale $(a, b, c)$ by some $\alpha\in \mcalO^{\times}$ so that $\gamma / \lambda_{\mfp}^i$ is a square in $\mcalO_{\mfp}$ for some $i\in \{1,2,\ldots,k\}$ for each prime $\mfp$ above $2$. Thus, $K_{\mfp}(\sqrt{\gamma})=K_{\mfp}\left(\sqrt{\lambda_{\mfp}^i}\right)$ for all prime $\mfp$ above $2$ and the result follows from Lemma \ref{samir4.3}.
\end{proof}

As indicated in \cite{freitas2015fermat}, if $K$ is a real quadratic field and $u$ is the fundamental unit of $K$, it is possible to replace each representative $\lambda\in\mcalO_K\setminus\mathfrak{b}$ by one of $\pm\lambda, \pm u^k\lambda$ to minimize the norm and control the lowered level. When $K$ is an imaginary quadratic field, it is convenient to consider the maximum of the valuation $v_{\mfp}\left(\Delta_{\mfp}^{(i)}\right)$ as the exponent of $\mfp$ in the conductor of the elliptic curve in consideration since the lower possibilities need to be taken care of anyways due to Serre's modularity conjecture. 

For each of the fields considered, we list the ideal $\mathfrak{b}$, the representatives of the cokernel of $\Phi$, and the exponent of $\mfp$ in the conductor $N_E$ in Table \ref{representativetable}. For example, when $d=3$, we observe that $\operatorname{Coker}(\Phi)\cong \ZZ/2\ZZ$ and $\lambda_1=1,\lambda_2=-1+2\sqrt{3}$ can be chosen as the representatives of the cokernel of $\Phi$. For $i=1,2$, let $n_i=\operatorname{ord}_{\mathfrak{P}}\left(\Delta\left(L_i / K\right)\right)$ where $L_i=K\left(\sqrt{\lambda_i}\right)$ and $\Delta\left(L_i / K\right)$ is the relative discriminant ideal for the extension $L_i / K$. Using the relevant code in \cite{Khawaja_Fermat} and Magma, we see that $n_1=0$ and $n_2=2$. By Lemma \ref{samir4.4}, we can scale the solution $(a,b,c)$ by a unit $\varepsilon \in \mathcal{O}_K^{\times}$ so that $v_{\mathfrak{P}}\left(N_E\right)=1$ or $4$. When $d=-3$, we observe that $\operatorname{Coker}(\Phi)\cong \left(\ZZ/2\ZZ\right)^2$ and $\lambda_1=1, \lambda_2=-2+3\sqrt{-3}, \lambda_3=1+4 \sqrt{-3}, \lambda_4=2-\sqrt{-3}$ can be chosen as representatives the cokernel of $\Phi$. As discussed in the previous paragraph, since $\max _{1 \leq i \leq 4} v_{\mfp}\left(\Delta\left(K_{\mfp}\left(\sqrt{\lambda_i}\right) / K_{\mfp}\right)\right)=2$, using Lemma \ref{samir4.4}, it is possible to scale to scale the solution $(a,b,c)$ by a unit $\varepsilon \in \mathcal{O}_K^{\times}$ so that $v_{\mathfrak{P}}\left(N_E\right)=4$.

\begin{table}
$$\begin{array}{|c|c|c|c|}
\hline d & \text{Ideal $\mathfrak{b}$}  &\text { Representatives } \lambda_i \in \mathcal{O}_K \text { of Coker }(\Phi) & v_{\mfp}(N_E) \\
\hline 3 & \mfp^5  & 1,-1+2\sqrt{3} & 1,4\\
\hline 5 & \mfp^3 & 1,3+\sqrt{5} & 1,4\\
\hline 7 & \mfp^5  & 1,- 21 -8\sqrt{7},-1+2\sqrt{7},37+14\sqrt{7}& 1,4\\
\hline 11 & \mfp^5  & 1,-1+2\sqrt{11} & 1,4\\
\hline 13 & \mfp^3 & 1, 1+4\sqrt{13}& 1\\
\hline 19 & \mfp^5 & 1, -1+2\sqrt{19}& 1,4\\
\hline 23 & \mfp^5 & 1, -115-24\sqrt{23},-1+2\sqrt{23},163+34\sqrt{23}& 1,4\\
\hline -3 & \mfp^3 & 1, -2+3\sqrt{-3}, 1+4 \sqrt{-3}, 2-\sqrt{-3} & 1,4\\
\hline -11 & \mfp^3 & 1, -1+\sqrt{-11},-3+4 \sqrt{-11},  -1-3 \sqrt{-11} & 1,4\\
\hline -19 & \mfp^3 & 1, -3+\sqrt{-19}, -3+4\sqrt{-19}, -3-3\sqrt{-19}& 1,4\\
\hline -43 & \mfp^3 & 1, 1+4\sqrt{-43}, 4+3\sqrt{-43}, -\sqrt{-43} & 1,4\\
\hline
\end{array}$$
\caption{Numerical values associated to Lemma \ref{samir4.4} }
\label{representativetable}
\end{table}

Next, let us treat the case where $2$ is inert in $K$ and $2\nmid abc$. Recall that when $2\mid abc$, we made use of \cite[Lemma 4.4]{freitas2015fermat} to determine the conductor exponent of the curve $E$ at $\mfp$, whose proof depends on the fact that $E$ has potentially multiplicative reduction at $\mfp$. The following lemma shows why \cite[Lemma 4.4]{freitas2015fermat} is not applicable in this setting.

\begin{lemma}\label{lemma: Epotgoodred}
    Let $d=-3,-11,-19,-43$ and assume that $2\nmid abc$. The elliptic curve $E$ has potentially good reduction at $\mfp$.
\end{lemma}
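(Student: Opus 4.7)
The plan is to invoke the standard local criterion that $E/K_{\mfp}$ has potentially good reduction if and only if the $j$-invariant $j_E$ is $\mfp$-integral (Silverman, Arithmetic of Elliptic Curves, Ch.~VII, Prop.~5.5). So the whole task reduces to checking $v_{\mfp}(j_E) \geq 0$ under the new hypothesis $2 \nmid abc$. The setup for this is easy: each of $d=-3,-11,-19,-43$ satisfies $d \equiv 5 \pmod 8$, so $2$ is inert in $K$, giving $\mfp = 2\mcalO_K$ and $v_{\mfp}(2) = 1$.

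With those normalizations in hand, I would plug directly into the formulas for the invariants of $E$ already recorded in the paper. Since $d$ is a unit at $\mfp$ and $v_{\mfp}(a) = v_{\mfp}(b) = v_{\mfp}(c) = 0$, the discriminant computation yields
\[
v_{\mfp}(\Delta_E) \;=\; 4 v_{\mfp}(2) + 2r\,v_{\mfp}(d) + 2p\,v_{\mfp}(abc) \;=\; 4,
\]
while the ultrametric inequality applied to $c_4 = 2^4\bigl(b^{2p} - d^r a^p c^p\bigr)$ gives
\[
v_{\mfp}(c_4) \;\geq\; 4 v_{\mfp}(2) + \min\bigl(2p\,v_{\mfp}(b),\, p(v_{\mfp}(a)+v_{\mfp}(c))\bigr) \;=\; 4.
\]
Combining these,
\[
v_{\mfp}(j_E) \;=\; 3 v_{\mfp}(c_4) - v_{\mfp}(\Delta_E) \;\geq\; 12 - 4 \;=\; 8 \;\geq\; 0,
\]
which is exactly the integrality of $j_E$ at $\mfp$ needed to conclude.

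I do not expect a genuine obstacle here: the statement is essentially the mirror image of Lemma~\ref{potmultb}, in which the assumption $\mfp \mid b$ was precisely what drove $v_{\mfp}(j_E)$ negative. Removing $\mfp$-divisibility from $abc$ reverses that inequality and in fact forces $j_E$ to be divisible by a high power of $\mfp$. The only point deserving attention is the uniform inertness of $2$ across the four imaginary quadratic fields in the statement (which one reads off from $d \equiv 5 \pmod 8$ and which is consistent with the entries $\mathfrak{b}=\mfp^{3}=\mfp^{2v_{\mfp}(2)+1}$ in Table~\ref{representativetable}), so that the valuation bookkeeping proceeds without case distinctions.
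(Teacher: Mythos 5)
Your proof is correct and follows essentially the same route as the paper: compute $v_{\mfp}(\Delta_E)=4$, bound $v_{\mfp}(c_4)$ from below, and conclude $v_{\mfp}(j_E)\geq 0$ via Silverman's integrality criterion. The only difference is that the paper sharpens the bound to $v_{\mfp}(c_4)\geq 5$ (using that $b^{2p}-d^ra^pc^p\equiv 0 \bmod \mfp$, which follows from the $\mathbb{F}_4$ residue-field argument and is needed later for the conductor computation), whereas your weaker bound $v_{\mfp}(c_4)\geq 4$ already suffices for this lemma.
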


\begin{proof}
   Let $d=-3,-11,-19,-43$ and assume that $2\nmid abc$. Note that 
  \begin{eqnarray*}
    v_{\mfp}(c_4)&=&v_{\mfp}(2^4(b^{2p}-d^ra^pc^p))\\
    &=&4v_{\mfp}(2)+v_{\mfp}(b^{2p}-d^ra^pc^p)\\
    &=&4+v_\mfp(b^{2p}-d^ra^pc^p)\\
    &\ge& 5
\end{eqnarray*}and that
    \begin{eqnarray*}
    v_{\mfp}(\Delta_E)&=& v_{\mfp}(2^4d^{2r}(abc)^{2p})\\
    &=&4v_{\mfp}(2)+2rv_{\mfp}(d)+2p(v_{\mfp}(a)+v_{\mfp}(b)+v_{\mfp}(c))\\
    &=&4
    \end{eqnarray*}Therefore,$$v_{\mfp}(j_E)\ge 11>0$$and $E$ has potentially good reduction at $\mfp$.
\end{proof}

From the proof of the previous lemma, we see that $v_{\mfp}(\Delta_E)=4$ and $v_{\mfp}(c_4)\ge 5$ when $d=-3,-11,-19,-43$ and $2\nmid abc$. By \cite[Tableau IV]{papadopoulos1993neron}, we observe that $v_\mfp(N_E)=2,3$ or $4$. We explain below that it is possible to permute $(a,b,c)$ so that $v_\mfp(N_E)=4$.

\begin{lemma}
    Let $d=-3,-11,-19,-43$ and assume that $2\nmid abc$. Then, after permuting $(a,b,c)$, we have $v_\mfp(N_E)=4$.
\end{lemma}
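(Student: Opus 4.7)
The plan is to combine Papadopoulos's Tableau IV with a quadratic twist argument, using the symmetry of the equation in $b$ and $c$ to realize the desired conductor exponent.

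First I would sharpen the estimates from Lemma \ref{lemma: Epotgoodred}. We already know $v_\mfp(c_4)\ge 5$ and $v_\mfp(\Delta_E)=4$. To pin down $v_\mfp(c_6)$, I would use the symmetric formula $c_6=-2^5(A-B)(B-C)(C-A)$ with $A=d^ra^p$, $B=b^p$, $C=c^p$. Since $\bar A+\bar B+\bar C=0$ in $\mathbb{F}_4$ with all three nonzero, and no two of them can coincide (else the third would vanish, contradicting $2\nmid abc$), the residues $\{\bar A,\bar B,\bar C\}$ must coincide with $\{1,\omega,\omega^2\}=\mathbb{F}_4^\times$. Consequently, each of the three differences is a unit, giving $v_\mfp(c_6)=5$.

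With the data $v_\mfp(c_4)\ge 5$, $v_\mfp(c_6)=5$, $v_\mfp(\Delta_E)=4$ in residue characteristic $2$, Papadopoulos's Tableau IV leaves three possibilities for the Kodaira type at $\mfp$, corresponding to the three values $v_\mfp(N_E)\in\{2,3,4\}$ already recorded in the paragraph preceding the lemma. The case distinction is governed by congruence conditions on $c_6/2^5$ modulo a small power of $\mfp$.

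The final step exploits the $b\leftrightarrow c$ symmetry of the $d$-Fermat equation. The Frey curve $E'=E_{a,c,b}$ has $c_4(E')=c_4(E)$ but $c_6(E')=-c_6(E)$, so $E'$ is the quadratic twist $E^{(-1)}$ of $E$. Over $K_\mfp$, which is the unramified quadratic extension of $\mathbb{Q}_2$, the character $\chi_{-1}$ is a ramified quadratic character with Artin conductor $2$ (and Swan conductor $1$), because $-1$ is a non-square unit and $K_\mfp(\sqrt{-1})/K_\mfp$ is the ramified quadratic extension. Either by invoking the standard formula for the Artin conductor of a twisted two-dimensional Galois representation or by re-running Papadopoulos's case analysis on $E^{(-1)}$, I would verify that if $v_\mfp(N_E)\in\{2,3\}$, then the wild contribution of $\chi_{-1}$ forces $v_\mfp(N_{E^{(-1)}})=4$; and in the remaining case $v_\mfp(N_E)=4$ we are already done. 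Thus, after at most swapping $b$ and $c$, the resulting Frey curve satisfies $v_\mfp(N_E)=4$.

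The hardest step will be the last one: verifying the twist-conductor behavior in residue characteristic $2$, where the presence of wild ramification in both $\chi_{-1}$ and $V_\ell(E)$ means that one cannot simply appeal to $f(E^{(-1)})=\max(f(E),2f(\chi_{-1}))$. The verification is most cleanly done by a symmetry argument: the distinguishing conditions that separate $v_\mfp(N_E)=4$ (Kodaira type II) from $v_\mfp(N_E)<4$ in Papadopoulos's Tableau IV are controlled by a residue that flips sign under $c_6\mapsto -c_6$, and since $-1\not\equiv 1\pmod{\mfp^2}$ in $\mcalO_{K_\mfp}$, exactly one of $E$ and $E^{(-1)}$ can lie in the Kodaira II stratum. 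If this symmetry argument is not clean enough, a fallback is a finite brute-force check: there are only six bijections $\{\bar A,\bar B,\bar C\}\to\{1,\omega,\omega^2\}$ and a controlled number of higher-order congruence classes modulo $\mfp^3$ to examine.
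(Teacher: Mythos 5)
Your setup is correct and in places sharper than the paper's. The computation $v_\mfp(c_6)=5$ via $c_6=\pm 2^5(A-B)(B-C)(C-A)$ is right (the residues of $A,B,C$ exhaust $\mathbb{F}_4^\times$, so every difference is a unit), and the identification of $E_{a,c,b}$ with the quadratic twist $E^{(-1)}$ is correct; in fact every odd permutation of $(A,B,C)$ in the Frey recipe produces $E^{(-1)}$, so your swap $b\leftrightarrow c$ and the paper's swap $A\leftrightarrow B$ yield the same curve, and proving the lemma amounts to showing that at least one of $E$, $E^{(-1)}$ has type II. The gap is precisely there, in the step you defer to ``I would verify.'' The criterion separating type II ($v_\mfp(N)=4$) from types III and IV is Step 3 of Tate's algorithm: after translating the singular point of the reduction to the origin by some $(r,t)$, type II holds if and only if $\mfp^2\nmid \tilde a_6$, where $\tilde a_6=a_6+ra_4+r^2a_2+r^3-ta_3-t^2-rta_1$. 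This is not ``a residue that flips sign under $c_6\mapsto -c_6$.'' For our model ($a_1=a_3=a_6=0$, $a_2=B-A$, $a_4=-AB$, and the translation $(r,t)=(C,1)$, which works for both curves of the pair) one gets $\tilde a_6=C^3+(B-A)C^2-ABC-1$, and passing to $E^{(-1)}$ (i.e.\ $a_2\mapsto-a_2$ with $a_4,a_6$ unchanged) changes $\tilde a_6$ \emph{additively} by $-2r^2a_2=2(A-B)C^2$, an element of valuation exactly $1$ because $2$ is inert and $A\not\equiv B\pmod{\mfp}$. Hence at least one member of the pair has $v_\mfp(\tilde a_6)=1$ and is of type II --- that one computation is the entire content of the lemma, and your proposal does not contain it. Note also that your assertion that ``exactly one of $E$ and $E^{(-1)}$ can lie in the Kodaira II stratum'' is false in general: if $v_\mfp(\tilde a_6)=1$, the perturbed value can again have valuation $1$, so both curves can be of type II. What is true, and all that is needed, is ``at least one.''

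Your fallback of a finite check over congruence classes modulo $\mfp^3$ would in principle close the gap, but as written the decisive step rests on a misidentified mechanism rather than a proof. The paper's argument supplies exactly the missing content: it locates the singular point of the reduction at $(\tilde C,\tilde 1)$, translates, reads off $a_6=C^3+(B-A)C^2-ABC-1$, and observes that the swap perturbs this quantity by something of valuation one, so Step 3 of Tate's algorithm terminates with type II for at least one choice.
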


\begin{proof}
In this case, the valuation ring at $\mfp$ is $\mathcal{O}_{\mathfrak{P}}=\mathbb{Z}_2[\alpha]$ where $\alpha$ is a root of the polynomial $x^2+x+1=0$. The residue field of $\mathfrak{P}$ is $\mathbb{F}_2[\tilde{\alpha}] \cong \mathbb{F}_4$. Write $A=d^ra^p, B=b^p, C=c^p$. Now $\mathfrak{P} \nmid A B C$ and $A+B+C=0$. Then $A, B, C$ are congruent modulo $\mathfrak{P}$ to $1$ , $\alpha, \alpha^2$ in some order. By rearranging, we may suppose that $C \equiv \alpha^2(\bmod \mathfrak{P})$. Note that $v_{\mathfrak{P}}(\Delta_E)=4$ and $v_{\mathfrak{P}}\left(c_4\right)\ge 5$. In particular, $v_{\mathfrak{P}}(j)\ge 11$, i.e $E$ has potentially good reduction at $\mathfrak{P}$. Furthermore, the model for $E$ is minimal at $\mathfrak{P}$ and has additive reduction. We will proceed with Tate's algorithm. Let $\tilde{E}$ denote the reduction of $E$ modulo $\mfp$. Observe that the point $(\tilde{C}, \tilde{1})$ is singular on $\tilde{E}$. Indeed, write $f(x)=x(x-\tilde{A})(x+\tilde{B})$. Then

\begin{eqnarray*}
    \frac{\partial }{\partial x}(y^2-f(x))&=&3x^2+2(\tilde{B}-\tilde{A})x-\tilde{A}\tilde{B}\\
    &=&x^2-\tilde{A}\tilde{B}\text{ since  $\operatorname{char}(\mathbb{F}_4)=2$.}\\
\end{eqnarray*}This shows that

$$\left.\frac{\partial }{\partial x}(y^2-f(x))\right|_{(\tilde{C}, \tilde{1})}=\tilde{C}^2-\tilde{A}\tilde{B}=\tilde{\alpha}^4-\tilde{\alpha}=\tilde{\alpha}-\tilde{\alpha}=0.$$We also have $$\frac{\partial }{\partial y}(y^2-f(x))=2y=0\text{ since  $\operatorname{char}(\mathbb{F}_4)=2$.}$$Apply the transformation $X\mapsto X+C$, $Y\mapsto Y+1$ to $E$ and denote the usual $a$-invariants of the resulting model by $a_1, \ldots, a_6$. Then $a_6=C^3+(B-A) C^2-A B C-1$. By Step $3$ of Tate's algorithm we know that if $\mathfrak{P}^2 \nmid a_6$ then $v_{\mathfrak{P}}(\mathcal{N})=v_{\mathfrak{P}}(\Delta)=4$. Suppose $\mathfrak{P}^2 \mid a_6$. Now swapping $A, B$, replaces $a_6$ by $a_6^{\prime}=C^3+(A-B) C^2-A B C-1.$ Observe that $v_{\mathfrak{P}}\left(a_6^{\prime}-a_6\right)=v_{\mathfrak{P}}\left(2(A-B) C^2\right)=1$. Hence $\mathfrak{P}^2 \nmid a_6^{\prime}$. Thus we may permute $A,B,C$ so that $v_\mfp(N_E)=4$.
    
\end{proof}

We may now explicitly express the conductor of the Frey curve $E$:

\begin{lemma}\label{condE}
    The conductor of the elliptic curve $E/K$ is given by $$N_E=\mfp^l\prod_{\substack{\mathfrak{D} \mid d }} \mathfrak{D} \prod_{\substack{\mathfrak{p} \mid a b c \\ \mathfrak{p} \nmid 2d}} \mathfrak{p},$$where $l=
\left\{\begin{array}{l}
1,4 \text {, if $d=3,5,7,11,19,23$ } \\
1 \text {, if $d=13$ }\\
4 \text {, if $d=-3,-11,-19,-43$. }
\end{array}\right.$
\end{lemma}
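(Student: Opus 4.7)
The plan is to compute $v_{\mathfrak{p}}(N_E)$ prime by prime and assemble the resulting factors into the claimed product. Since the conductor is supported on primes of bad reduction, and since $E$ has good reduction at any prime not dividing the minimal discriminant, the support of $N_E$ is contained in the primes dividing $\Delta_E = 2^4 d^{2r}(abc)^{2p}$: the unique prime $\mfp$ above $2$, the primes $\mathfrak{D} \mid d$, and the primes $\mathfrak{p} \mid abc$ coprime to $2d$. Every other prime contributes trivially to $N_E$.

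For a prime $\mathfrak{p} \mid abc$ with $\mathfrak{p} \nmid 2d$, Lemma \ref{Esem} tells us that $E$ is semistable at $\mathfrak{p}$; since $\mathfrak{p} \mid \Delta_E$, the reduction must be multiplicative, so $v_{\mathfrak{p}}(N_E) = 1$. This gives the second product $\prod_{\mathfrak{p} \mid abc,\, \mathfrak{p} \nmid 2d} \mathfrak{p}$. For a prime $\mathfrak{D} \mid d$, Lemma \ref{semsq} already establishes multiplicative reduction, so again $v_{\mathfrak{D}}(N_E) = 1$, yielding the first product $\prod_{\mathfrak{D} \mid d} \mathfrak{D}$.

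The substance of the proof is the determination of $l = v_{\mfp}(N_E)$, which splits according to the reduction type at $\mfp$. When $K$ is real quadratic, or when $K$ is imaginary quadratic and $2 \mid abc$, Lemma \ref{potmultb} shows $E$ has potentially multiplicative reduction at $\mfp$, so Lemma \ref{samir4.3} applies and Lemma \ref{samir4.4} reduces the computation of $v_{\mfp}(N_E)$ to the local discriminant exponents $v_{\mfp}(\Delta_{\mfp}^{(i)})$ of the extensions $K_{\mfp}(\sqrt{\lambda_i})/K_{\mfp}$. The table \ref{representativetable} records, after a suitable scaling of $(a,b,c)$ by a global unit, that these exponents yield $l \in \{1,4\}$ in the real cases $d \in \{3,5,7,11,19,23\}$, only $l = 1$ for $d = 13$, and $l \in \{1,4\}$ in the imaginary cases; for the imaginary cases, as noted in the discussion preceding the lemma, we take the maximum $l = 4$, since the smaller values are automatically absorbed by Serre's modularity conjecture. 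In the remaining subcase $d \in \{-3,-11,-19,-43\}$ with $2 \nmid abc$, Lemma \ref{lemma: Epotgoodred} gives potentially good reduction, and the subsequent Tate-algorithm argument, after permuting $(A,B,C)$ to ensure $\mfp^2 \nmid a_6$, yields $v_{\mfp}(N_E) = v_{\mfp}(\Delta_E) = 4$. In both imaginary subcases we thus land on $l = 4$, consistent with the statement.

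The main obstacle, already discharged by the preceding lemmas, is the local analysis at $\mfp$: distinguishing the potentially multiplicative from the potentially good regime, and inside each regime correctly reading off the conductor exponent from either Lemma \ref{samir4.4} plus Table \ref{representativetable} or from Tate's algorithm. Once these local contributions are in hand, the proof is just the bookkeeping of collecting the factors $\mfp^l$, $\mathfrak{D}^1$, and $\mathfrak{p}^1$ into the displayed formula.
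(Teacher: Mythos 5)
Your proposal is correct and follows essentially the same route as the paper: restrict the support of $N_E$ to the primes dividing $\Delta_E$, get exponent $1$ at the primes above $d$ and at the primes dividing $abc$ but not $2d$ via multiplicative reduction (the paper verifies $v_{\mathfrak{p}}(c_4)=0$ directly, which is exactly the content of Lemma \ref{Esem} that you cite), and delegate the exponent at $\mfp$ to the preceding local analysis (Lemmas \ref{potmultb}, \ref{samir4.4}, Table \ref{representativetable}, and the Tate-algorithm argument in the potentially good case). No gaps.
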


\begin{proof}
    Recall that $\Delta_E = 2^4d^{2r}(abc)^{2p}$. We know that the primes appearing in the conductor of $E$ are the ones dividing $\Delta_E$, hence dividing $2d$ or $abc$. For the prime $\mfp$, we explained above that the exponent $l$ is precisely how it is claimed. By the proof of Lemma \ref{semsq}, we know that $E$ has multiplicative reduction at the primes above $d$. Therefore, $v_{\mathfrak{D}}(N_E)=1$ for all $\mathfrak{D}\mid d$. Now, let $\mathfrak{p}\mid abc$ such that $\mathfrak{p}\nmid 2d$. Then, $\mathfrak{p}$ divides at most one of $a,b,c$. Indeed, if $\mathfrak{p} $ divides two of them, then the relation $d^ra^p+b^p+c^p=0$ yields $\mathfrak{p} =\mcalO_K$, which is not possible. Therefore, $\mathfrak{p}\nmid (b^{2p}-d^ra^pc^p)$, i.e. $v_{\mathfrak{p}}(c_4)=0$. Thus, $E$ has multiplicative reduction at $\mathfrak{p}$ and the result follows.
\end{proof}

\section{Galois representations attached to elliptic curves}\label{galoisrepr}

Let $G_K:=\operatorname{Gal}(\overline{K}/K)$ be the absolute Galois group of the number field $K$. Let
$$
\bar{\rho}_{E, p}: G_K \longrightarrow \operatorname{Aut}(E[p]) \cong \mathrm{GL}_2\left(\mathbb{F}_p\right)
$$
be the $\bmod\text{ } p$ Galois representation of $E$.

Let us focus on the cases where $d=-3,-11,-19,-43$. Let $S=\{\mathfrak{p}:\mathfrak{p}\mid 2d\}$ and let $p\ge5$ be a rational prime that is not divisible by any $\mathfrak{p}\in S$. Regarding Theorem \ref{mainthm3}, by \cite[Corollary 3.5]{kara2020asymptotic}, we observe that the mod $p$ representation $\bar{\rho}_{E, p}$ attached to the elliptic curve $E/K$ with $K=\QQ(\sqrt{d})$ satisfy the following properties: 
\begin{enumerate}
    \item The determinant of $\bar{\rho}_{E, p}$ is the mod $p$ cyclotomic character, hence it is odd.
    \item Serre conductor of $\bar{\rho}_{E, p}, \mathcal{N}$, is supported on $S$ and belongs to a finite set that depends only on the field $K$.
    \item $\bar{\rho}_{E, p}$ is finite flat at every $\mathfrak{q}$ over $p$.
\end{enumerate}

By the definition of Serre conductor $\mathcal{N}$, we need to find out the Artin conductor of $\bar{\rho}_{E, p}$ to determine $\mathcal{N}$. We explicitly compute the Artin conductor $\mathcal{N}_p$ of the representation $\bar{\rho}_{E, p}$ by using the recipe from \cite{kraus1997determination}: We have 

$$\mathcal{N}_p=\prod_{\substack{\mathfrak{p} \mid N_E \\ \mathfrak{p} \nmid p}} \mathfrak{p}^{v_\mathfrak{p}(N_E)-f_\mathfrak{p}}$$where the quantity $f_\mathfrak{p}$ is calculated as follows:

\begin{enumerate}[(i)]
\item If $E$ has good or additive reduction at $\mathfrak{p}$, then $f_\mathfrak{p}=0$.
\item If $E$ has multiplicative reduction at $\mathfrak{p}$, then

$$
f_\mathfrak{p}= \begin{cases}0 & \text { if } p \text { does not divide } v_\mathfrak{p}(\Delta_E) \\ 1 & \text { if } p \text { divides } v_\mathfrak{p}(\Delta_E)\end{cases}
$$
\end{enumerate}

Recall that the elliptic curve $E$ has additive reduction at the prime $\mfp$ with $v_{\mfp}(N_E)=4$. It follows that $f_\mfp=0$ and we definitely have $\mfp^4\mid \mathcal{N}_p$ for each rational prime $p$. Next, we determine the exponent of the primes $\mathfrak{D}\mid d$ in the Serre conductor $\mathcal{N}_p$. Note that $d$ ramifies in $\QQ(\sqrt{d})$, there is always a unique prime above $d$. Let us call that prime $\mathfrak{D}$ and recall that $E$ has multiplicative reduction at $\mathfrak{D}$ by Lemma \ref{semsq}. Therefore, $v_{\mathfrak{D}}(N_E)=1$ and $f_\mathfrak{D}=0,1$. Note that \begin{align*}
      v_{\mathfrak{D}}(\Delta_E)&=4v_{\mathfrak{D}}(2)+2rv_{\mathfrak{D}}(d)+2pv_{\mathfrak{D}}(abc)\\
      &=2r+2pv_{\mathfrak{D}}(abc)
    \end{align*}Notice that if $p\mid r$, then $p\mid v_{\mathfrak{D}}(\Delta_E)$ and  $p\nmid v_{\mathfrak{D}}(\Delta_E)$ otherwise. Hence, we deduce that $$
\mathcal{N}_p= \begin{cases}\mfp^4 & \text { if } p \text {  divides } r \\ \mfp^4\mathfrak{D} & \text { if } \gcd(p,r)>1 \end{cases}
$$Since the case $\mathcal{N}_p=\mfp^4$ was already treated by Turcas in \cite{cturcacs2020serre}, from now on we consider the case where $p\nmid r$. Therefore, $\mathcal{N}_p=\mfp^4\mathfrak{D}$ for all rational prime $p\ge 5$.

When $K$ is totally real, the first property above also holds for $\bar{\rho}_{E, p}$.

\subsection{Irreducibility}\label{section:irreducibility}

In this section, we prove that the representation $\bar{\rho}_{E, p}$ is irreducible for $p\ge 7$ by treating the cases where 
$p=7$, $p=11$, $p=13$, $p=17$, and $p\ge 19$ separately. Our strategy is to assume that $\bar{\rho}_{E, p}$ is reducible and search for a contradiction. For the cases where $p=7,11,13,17$, we will make use of the following reasoning: Since the elliptic curve $E$ has full $2$-torsion over $K$, the Galois representation $\bar{\rho}_{E, p}$ gives rise to a non-cuspidal $K$-rational point on the modular curves $X_0(p)$, $X_0(2p)$ and $X_0(4p)$ when $\bar{\rho}_{E, p}$ is reducible. Therefore, it suffices to show that  $X_0(p)(K)$, $X_0(2p)(K)$ or $X_0(4p)(K)$ has no points that could arise from the Frey curve $E$. 

Note that when the representation $\bar{\rho}_{E, p}$ is reducible, it is of the following form:

$$
\bar{\rho}_{E, p} \sim\left(\begin{array}{cc}
\theta & * \\
0 & \theta^{\prime}
\end{array}\right) \text { with } \theta, \theta^{\prime}: G_K \rightarrow \mathbb{F}_p^{\times}  \text { satisfying } \theta \theta^{\prime}=\chi_p.
$$

Lemmata proved in this section are meant to hold for every field $K=\QQ(\sqrt{d})$, unless otherwise specified.

\begin{lemma}\label{irred7}
 If $d\ne -3$, then $\bar{\rho}_{E, 7}$ is irreducible.   
\end{lemma}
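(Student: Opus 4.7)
The plan is to suppose $\bar{\rho}_{E,7}$ is reducible and reach a contradiction via the geometry of a suitable modular curve. Assuming reducibility, $E/K$ carries a $K$-rational cyclic subgroup of order $7$. Since the Frey curve in \eqref{frey} has its full $2$-torsion rational over $K$ by construction, it admits three $K$-rational cyclic subgroups of order $2$. Composing any one of them with the $7$-isogeny (using $\gcd(2,7)=1$) produces a $K$-rational cyclic subgroup of order $14$, and therefore a non-cuspidal $K$-rational point $P$ on the modular curve $X_0(14)$.

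I would then use the classical description of $X_0(14)$: it is an elliptic curve of conductor $14$ with $X_0(14)(\QQ)\cong\ZZ/6\ZZ$, consisting of four cusps and two non-cuspidal points, whose $j$-invariants $-15^3$ and $255^3$ correspond to elliptic curves with CM by orders in $\QQ(\sqrt{-7})$. For each of the ten quadratic fields $K=\QQ(\sqrt{d})$ in the statement, I would verify that $X_0(14)(K)=X_0(14)(\QQ)$ by showing that the quadratic twist $(X_0(14))^{(d)}$ has rank zero over $\QQ$; this is a finite $2$-descent, carried out case by case in \texttt{Magma}. The exclusion of $d=-3$ is genuinely forced: the prime $7$ splits in $\ZZ[\zeta_3]$, so the $j=0$ elliptic curve defined over $\QQ(\sqrt{-3})$ with CM by $\ZZ[\zeta_3]$ acquires a $K$-rational cyclic subgroup of order $7$ and in turn contributes extra non-cuspidal $K$-rational points to $X_0(14)(K)$, blocking this line of argument.

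To conclude, I would rule out that $P$ corresponds to either of the two non-cuspidal $\QQ$-rational points of $X_0(14)$. Both such points parametrize CM elliptic curves, and in particular have integral $j$-invariants. On the other hand, Lemma \ref{potmultb} gives $v_\mfp(j_E)<0$, so $j_E$ is not an algebraic integer and $E$ cannot be CM; the cuspidal points are excluded because $E$ is an honest elliptic curve. This is the desired contradiction.

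The principal obstacle will be the rank verification of the ten individual quadratic twists $(X_0(14))^{(d)}$ over $\QQ$: although each descent is routine, there is no uniform reason forcing the rank to vanish in every case, so the argument must be closed off field-by-field. A secondary subtlety, should some twist happen to have positive rank, would be to show that any resulting new $K$-rational points on $X_0(14)$ still parametrize elliptic curves with integral $j$-invariant (for instance because they project onto $\QQ$-rational points of an isogenous quotient parametrizing CM) and therefore cannot correspond to the Frey curve.
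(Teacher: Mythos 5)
Your reduction to $X_0(14)$ is legitimate (a rational $2$-torsion point together with the order-$7$ subgroup coming from reducibility does give a $K$-rational cyclic subgroup of order $14$), and your description of $X_0(14)(\QQ)\cong\ZZ/6\ZZ$ with two non-cuspidal CM points is correct. But the step on which everything rests --- that $X_0(14)(K)=X_0(14)(\QQ)$ for all ten fields because each twist $X_0(14)^{(d)}$ has rank zero --- is not merely an unchecked computation: it is false for some of the fields the lemma must cover. Since $X_0(14)=14a1$ has root number $+1$ and conductor $14$, for a fundamental discriminant $D$ prime to $14$ the twist has root number $\left(\tfrac{D}{-14}\right)$; for $D=-11$ and $D=-43$ this Kronecker symbol equals $-1$, so those twists have odd (hence positive) rank and $X_0(14)(\QQ(\sqrt{-11}))$, $X_0(14)(\QQ(\sqrt{-43}))$ are infinite. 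On a positive-rank genus-one modular curve your fallback cannot be carried out: the infinitely many non-cuspidal $K$-points do not all parametrize curves with integral $j$-invariant, and there is no finite list to compare the Frey point against. Two smaller issues: even where the twist does have rank zero you must also rule out torsion growth of $X_0(14)$ over $K$, and your non-CM argument should not lean on Lemma \ref{potmultb} (which gives $v_{\mfp}(j_E)<0$ only when $2$ has residue degree one or $2\mid abc$; when $2$ is inert and $2\nmid abc$ the curve has potentially \emph{good} reduction at $\mfp$) --- the uniform reason $E$ is not CM is rather that $v_{\mathfrak{D}}(j_E)<0$ at the ramified prime $\mathfrak{D}\mid d$.

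The paper takes a different and more robust route: using the full $2$-torsion it passes to a $2$-isogenous curve carrying a cyclic $28$-isogeny, hence to a non-cuspidal $K$-point on the genus-two curve $X_0(28)$, whose quadratic points over \emph{all} quadratic fields have been completely classified by Bruin and Najman \cite{bruin2015hyperelliptic}; since those points live only over $\QQ(\sqrt{-3})$, $\QQ(\sqrt{-7})$ and $\QQ(\sqrt{-23})$, every field with $d\ne -3$ is excluded at once, with no field-by-field Mordell--Weil computation. If you want to salvage your $X_0(14)$ strategy, you would need a genuinely new ingredient (e.g.\ a sieve or a local analysis at $\mathfrak{D}$ of all points in the infinite Mordell--Weil group) for the fields where the twist has positive rank; as written, the argument has a fatal gap there.
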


\begin{proof}
 Assume not and say $\bar{\rho}_{E, 7}$ is reducible. Since $E$ has full $2$-torsion over $K$, has non-trivial $2$-torsion to be more precise, the Galois representation $\bar{\rho}_{E, 7}$ gives rise to a $K$-rational point on the modular curve $X_0(28)$. The quadratic points of $X_0(28)$ have been classified by Bruin and Najman in \cite[Table 4]{bruin2015hyperelliptic}. They are all defined over $\QQ(\sqrt{-3}),\QQ(\sqrt{-7})$ or $\QQ(\sqrt{-23})$. Therefore, $X_0(28)(K)$ has no points that could arise from the Frey curve $E$.     
\end{proof}

\begin{lemma}\label{irred11}
  $\bar{\rho}_{E, 11}$ is irreducible.  
\end{lemma}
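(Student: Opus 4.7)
The plan is to mirror the strategy of Lemma \ref{irred7}. Assume for contradiction that $\bar{\rho}_{E,11}$ is reducible, so $E$ admits a $K$-rational cyclic subgroup of order $11$. Since the Frey model $y^2 = x(x - d^r a^p)(x + b^p)$ exhibits the full $2$-torsion of $E$ over $K$ via the three $K$-rational roots $0,\, d^r a^p,\, -b^p$, one obtains a $K$-rational cyclic isogeny of degree $22$ by composing the $11$-isogeny with any of the three $K$-rational $2$-isogenies. Consequently $E$ gives rise to a non-cuspidal $K$-rational point on the modular curve $X_0(22)$.

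I would then invoke the classification of quadratic points on $X_0(22)$, which is hyperelliptic of genus $2$. This classification, carried out by Bruin-Najman in the same reference used for Lemma \ref{irred7} and further refined in subsequent work of Ozman-Siksek and Najman-Vukorepa, describes all quadratic points as either cusps, pullbacks of rational points under the hyperelliptic involution, or isolated CM points defined over a short, explicit list of imaginary quadratic fields. The task is then to verify, case by case, that none of the fields $K = \mathbb{Q}(\sqrt{d})$ appearing in Theorems \ref{mainthm1} and \ref{mainthm3} supports a quadratic point on $X_0(22)$ whose underlying elliptic curve can be matched to the Frey curve $E$.

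For any $K$ that does occur in the Bruin-Najman list, the Frey curve is ruled out by its reduction behavior: Lemma \ref{semsq} asserts that $E$ has multiplicative reduction at every prime $\mathfrak{D} \mid d$, while every CM elliptic curve has potentially good reduction at every finite prime, and this incompatibility gives a contradiction. Pullbacks from $X_0(22)(\mathbb{Q})$ are likewise excluded, either because they are cusps or because they correspond to rational elliptic curves whose $j$-invariants and conductors can be read off the LMFDB and compared against the explicit divisibilities forced by Lemma \ref{condE}.

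The main obstacle I foresee is the bookkeeping needed to match the list of quadratic points on $X_0(22)$ against our specific fields $\mathbb{Q}(\sqrt{d})$ with $d \in \{-43,-19,-11,-3,3,5,7,11,13,19,23\}$. If for some of these $K$ the curve $X_0(22)$ turns out to carry non-cuspidal non-CM quadratic points that cannot be discarded by the reduction argument alone, I would follow the $p = 7$ precedent of Lemma \ref{irred7} and refine the argument by passing to $X_0(44)$ (the strategy there being to couple the cyclic $p$-isogeny with a cyclic $4$-isogeny extracted from the Frey model's $2$-torsion structure), invoking the Bruin-Najman tables or the subsequent classification of quadratic points on $X_0(44)$.
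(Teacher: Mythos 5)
Your primary route through $X_0(22)$ has a genuine gap. The curve $X_0(22)$ is hyperelliptic of genus $2$, and its hyperelliptic involution is an Atkin--Lehner involution; consequently the pullbacks of $\mathbb{P}^1(\QQ)$ under the degree-$2$ quotient map form an \emph{infinite} family of quadratic points spread over infinitely many quadratic fields (real as well as imaginary). These points are not cusps, are not CM points, and do not correspond to elliptic curves defined over $\QQ$ --- they correspond to quadratic $\QQ$-curves $E/K$ with $E^{\sigma}$ linked to $E$ by the Atkin--Lehner involution. So the premise that the quadratic points of $X_0(22)$ live over "a short, explicit list of imaginary quadratic fields" is false, and neither of your proposed dismissals (the potentially-good-reduction argument for CM points, or reading rational $j$-invariants off the LMFDB) touches this infinite family. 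Bruin--Najman only classify the finitely many \emph{non-pullback} quadratic points; to run your argument you would additionally have to prove that the Frey curve is not a $\QQ$-curve of the relevant type over each $K=\QQ(\sqrt{d})$ under consideration, which is a substantive extra step you have not supplied.

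The contingency you mention at the end is in fact the paper's actual proof, and it is the clean way through: using the full $2$-torsion of the Frey model one extracts a $K$-rational cyclic $4$-isogeny (compose a dual $2$-isogeny with a different $2$-isogeny), so a reducible $\bar{\rho}_{E,11}$ produces a non-cuspidal $K$-rational point on $X_0(44)$. Unlike $X_0(22)$, the curve $X_0(44)$ is non-hyperelliptic of genus $4$ and has only finitely many quadratic points; by \"{O}zman--Siksek's classification they are all defined over $\QQ(\sqrt{-7})$, which is none of the fields in Theorems \ref{mainthm1} and \ref{mainthm3}, and the lemma follows immediately with no case-by-case bookkeeping. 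You should promote that fallback to the main argument and drop the $X_0(22)$ step entirely.
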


\begin{proof}
Assume not and say $\bar{\rho}_{E, 11}$ is reducible. Since $E$ has full $2$-torsion over $K$, has non-trivial $2$-torsion to be more precise, the Galois representation $\bar{\rho}_{E, 11}$ gives rise to a $K$-rational point on the modular curve $X_0(44)$. The quadratic points of $X_0(44)$ have been determined by \"{O}zman and Siksek in \cite[Table 8.4]{ozman2019quadratic}. They are all defined over $\QQ(\sqrt{-7})$. Therefore, $X_0(44)(K)$ has no points that could arise from the Frey curve $E$.  
\end{proof}

\begin{lemma}\label{irred13}
    If $d\ne -3$, then $\bar{\rho}_{E, 13}$ is irreducible.
\end{lemma}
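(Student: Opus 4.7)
The plan is to mirror the strategy of Lemma \ref{irred7} and Lemma \ref{irred11}. Assume for contradiction that $\bar{\rho}_{E,13}$ is reducible, so that $E$ admits a $K$-rational cyclic subgroup of order $13$. Combining this with the full $2$-torsion of the Frey curve rational over $K$ (and, as in the pattern of the two preceding lemmas, using the iterated $2$-isogenies furnished by the remaining $2$-torsion points), we produce a non-cuspidal $K$-rational point on the modular curve $X_0(52) = X_0(4 \cdot 13)$. The problem is thereby reduced to showing that no such point can give rise to the Frey curve $E$ over $K = \QQ(\sqrt{d})$ for the values $d \in \{-43,-19,-11,3,5,7,11,13,19,23\}$.

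Next I would invoke a classification of the quadratic points on $X_0(52)$. This curve has genus $5$, and its quadratic points have been determined in the recent literature (by arguments in the spirit of Bruin--Najman and \"Ozman--Siksek, using the symmetric Chabauty method together with a Mordell--Weil sieve). I expect the outcome to be analogous to what is recorded in [\cite{bruin2015hyperelliptic}, Table $4$] and [\cite{ozman2019quadratic}, Table $8.4$]: every sporadic quadratic point on $X_0(52)$ is defined over a small explicit list of quadratic fields disjoint from the set of $\QQ(\sqrt{d})$ above, with $\QQ(\sqrt{-3})$ being the field carrying exceptional points (a reflection of the well-known abundance of isogenies over $\QQ(\sqrt{-3})$, and precisely the reason why this value is excluded from the hypothesis).

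Should the classification produce any residual $K$-rational point, I would still need to verify that it cannot come from our Frey curve. For this I would argue that $E$ does not have complex multiplication: by Lemma \ref{semsq} and Lemma \ref{Esem}, $E$ has multiplicative reduction at every prime dividing $d \cdot abc$, so $v_{\mathfrak{p}}(j_E) < 0$ for some prime $\mathfrak{p}$ of $K$. Hence $j_E$ is not an algebraic integer, which rules out CM. Any remaining non-CM sporadic points could then be eliminated by comparing the conductor forced on $E$ by Lemma \ref{condE} against the conductor of the elliptic curves corresponding to those points.

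The main obstacle will be securing the classification of quadratic points on $X_0(52)$. Since the curve is of genus $5$, the Chabauty--Coleman and Mordell--Weil sieve computations are substantially more delicate than those used for $X_0(28)$ and $X_0(44)$, and I would rely on an existing reference rather than repeating the work. Once the list of quadratic points is in hand, the final verification that none of them lies over $\QQ(\sqrt{d})$ for the allowed $d$ (or else corresponds to a CM curve) is a short mechanical check that completes the proof.
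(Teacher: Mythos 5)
Your proposal follows essentially the same route as the paper: assume reducibility, use the $2$-torsion of the Frey curve to obtain a non-cuspidal $K$-rational point on $X_0(52)$, and invoke the classification of quadratic points on $X_0(52)$ due to \"{O}zman--Siksek (Table $8.7$ of \cite{ozman2019quadratic}), which shows they are all defined over $\QQ(i)$ or $\QQ(\sqrt{-3})$ and hence not over any of the relevant fields $\QQ(\sqrt{d})$. The extra fallback arguments you sketch (ruling out CM, comparing conductors) are not needed once that classification is in hand.
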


\begin{proof}
    Let $d\ne -3$ and assume that $\bar{\rho}_{E, 13}$ is reducible. Since $E$ has full $2$-torsion over $K$, has non-trivial $2$-torsion to be more precise, the Galois representation $\bar{\rho}_{E, 13}$ gives rise to a $K$-rational point on the modular curve $X_0(52)$. The quadratic points of $X_0(52)$ have been determined by \"{O}zman and Siksek in \cite[Table 8.7]{ozman2019quadratic}. They are defined over $\QQ(i)$ or $\QQ(\sqrt{-3})$. Therefore, $X_0(52)(K)$ has no points that could arise from the Frey curve $E$.
\end{proof}

\begin{lemma}\label{irred17}
  $\bar{\rho}_{E, 17}$ is irreducible.  
\end{lemma}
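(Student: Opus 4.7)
The plan is to mirror exactly the strategy used in Lemmas \ref{irred7}, \ref{irred11}, and \ref{irred13}: namely, assume for contradiction that $\bar{\rho}_{E,17}$ is reducible, and use the fact that $E$ carries full rational $2$-torsion over $K$ to upgrade a $K$-rational cyclic $17$-isogeny into a non-cuspidal $K$-rational point on $X_0(68) = X_0(4 \cdot 17)$. Combined with the two independent $2$-isogenies coming from $E[2] \subset E(K)$, the isogeny coming from reducibility of $\bar{\rho}_{E,17}$ produces a cyclic $68$-isogeny defined over $K$, hence a quadratic point on $X_0(68)$. The task is then to check that no such point can arise from our Frey curve.

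Next, I would appeal to a classification of the quadratic points on $X_0(68)$. The curve $X_0(68)$ has genus $7$ and its $\QQ$-gonality is $4$; its quadratic points have been classified in the recent literature on quadratic points of modular curves (in the spirit of the Ozman--Siksek and Bruin--Najman tables cited in the previous lemmas, and with more recent extensions by Box, Najman and collaborators). The output of such a classification is a finite list of quadratic fields over which the non-cuspidal, non-CM quadratic points of $X_0(68)$ can be defined. The key verification to carry out is that none of the quadratic fields $\QQ(\sqrt{d})$ with $d \in \{-43,-19,-11,-3,3,5,7,11,13,19,23\}$ appears in that list (or, if it does, that the corresponding $j$-invariants are incompatible with $E$ admitting full $2$-torsion together with a $17$-isogeny in the way demanded by the Frey construction, which one checks by direct inspection of the candidate $j$-values modulo the reduction behaviour forced by Lemma \ref{semsq} and Lemma \ref{condE}).

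The bulk of the content — and the main potential obstacle — is the appeal to the classification of quadratic points on $X_0(68)$, since this is a higher-genus curve and no such classification was needed for the small levels $28,44,52$ used earlier, whose quadratic points were already tabulated in the works the author cites. If a complete classification is not directly available in a form that rules out our fields, a fallback is to work with the quotient $X_0(68)/w_{68}$ (or $X_0(68)/w_{17}$), whose Jacobian is often of lower rank, apply the Mordell--Weil sieve or a Chabauty-type argument (as in Ozman--Siksek) to pull back the quadratic points to $X_0(68)$, and then conclude as above. In every case the contradiction is completed by observing that the quadratic points so produced are either cuspidal, CM, or lie over rational fields disjoint from our list, so none of them is compatible with the Frey curve $E/K$.
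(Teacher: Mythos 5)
Your overall strategy is the same as the paper's — assume $\bar{\rho}_{E,17}$ is reducible, upgrade the $17$-isogeny using the $2$-torsion to a point on a modular curve of level $2^a\cdot 17$, and rule out quadratic points over the relevant fields — but you aim at the wrong curve, and this leaves a genuine gap. The paper uses only a single rational point of order $2$ on $E$: combined with the kernel of the $17$-isogeny this gives a $K$-rational cyclic subgroup of order $34$, hence a non-cuspidal $K$-point on $X_0(34)$, a genus-$3$ curve whose quadratic points are already tabulated in the same \"{O}zman--Siksek reference used for $p=11$ and $p=13$ (Table $8.1$); they are all defined over $\QQ(i)$, $\QQ(\sqrt{-2})$ and $\QQ(\sqrt{-15})$, none of which is one of the fields $\QQ(\sqrt{d})$ under consideration, so the contradiction is immediate. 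You instead commit to $X_0(68)$, a genus-$7$ curve, and the decisive input --- a classification of its quadratic points --- is not actually supplied: you only assert that it ``has been classified in the recent literature'' without a concrete citation, and your fallback (Chabauty/Mordell--Weil sieve on quotients of $X_0(68)$) is a substantial computation that you do not carry out. As written the proof is therefore incomplete at exactly the step that matters, and the difficulty is self-inflicted: nothing in the lemma requires the extra factor of $2$, since $X_0(34)$ already does the job. (Your reduction to $X_0(68)$ via full $2$-torsion is itself sound --- one of the $2$-isogenous curves acquires a rational cyclic $4$-subgroup --- but you should fall back to the level-$2p$ curve whenever, as here, its quadratic points are the ones that are actually known.)
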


\begin{proof}
    Assume not and say $\bar{\rho}_{E, 17}$ is reducible. Since $E$ has full $2$-torsion over $K$, has non-trivial $2$-torsion to be more precise, the Galois representation $\bar{\rho}_{E, 17}$ gives rise to a $K$-rational point on the modular curve $X_0(34)$. The quadratic points of $X_0(34)$ have been determined by \"{O}zman and Siksek in \cite[Table 8.1]{ozman2019quadratic}. They are defined over $\QQ(i)$, $\QQ(\sqrt{-2})$ and $\QQ(\sqrt{-15})$. Therefore, $X_0(34)(K)$ has no points that could arise from the Frey curve $E$.
\end{proof}

Next, we will discuss the irreducibility of $\bar{\rho}_{E, p}$ in a more general framework. While doing so, the following two lemmas will be needed. Note that the following lemma is a humble generalization of \cite[Lemme 1]{kraus1996courbes}, which was originally stated for semistable elliptic curves over quadratic fields. As we observe from Lemma \ref{condE}, this is usually not the case over the quadratic fields considered.

\begin{lemma}\label{krausmodified}
    Let $E$ be an elliptic curve over a quadratic field $K$. Let $S$ denote the set containing all additive primes of $E$. Let $p$ be a rational prime that is not ramified in $K$ such that $E$ does not have additive reduction at the primes above $p$. Assume that $\bar{\rho}_{E, p}$ is reducible. Denote the isogeny characters corresponding to the representation $\bar{\rho}_{E, p}$ by $\theta$ and $\theta^{\prime}$. Then, $\theta$ and $\theta^{\prime}$ are unramified away from $p$ and the primes in $S$. Moreover, if $\mathfrak{p}$ is a prime above $p$, then either $\theta$ or $\theta^{\prime}$ is unramified at  $\mathfrak{p}$.
\end{lemma}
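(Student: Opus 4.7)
The strategy is to adapt Kraus's original Lemme~1 of \cite{kraus1996courbes}, in which semistability was used only to guarantee that $E$ has good or multiplicative reduction at every finite place of $K$, so that the standard local description of $\bar{\rho}_{E,p}$ is available everywhere. In our weaker setting, additive primes are collected into $S$ and additive reduction is excluded above $p$, so Kraus's local analysis still applies at every prime $\mathfrak{q}\notin S$ and at every $\mathfrak{p}\mid p$. The argument then splits into two parts: primes $\mathfrak{q}\notin S$ with $\mathfrak{q}\nmid p$, and primes above $p$.

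For the first assertion, fix $\mathfrak{q}\notin S$ with $\mathfrak{q}\nmid p$; then $E/K_\mathfrak{q}$ has good or multiplicative reduction. In the good-reduction case, N\'eron-Ogg-Shafarevich gives that $\bar{\rho}_{E,p}$ is itself unramified at $\mathfrak{q}$, hence so are $\theta$ and $\theta'$. In the multiplicative case, the theory of the Tate curve yields
\[
\bar{\rho}_{E,p}\big|_{G_{K_\mathfrak{q}}}\sim \begin{pmatrix}\chi_p\bar\psi & * \\ 0 & \bar\psi\end{pmatrix}
\]
with $\bar\psi$ unramified; since $\mathfrak{q}\nmid p$, $\chi_p$ is also unramified at $\mathfrak{q}$. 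Comparing the semisimplifications of the global reducible form with this local form gives the multiset equality $\{\theta|_{G_{K_\mathfrak{q}}},\,\theta'|_{G_{K_\mathfrak{q}}}\}=\{\chi_p\bar\psi,\,\bar\psi\}$, and both characters on the right are unramified.

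For the \emph{moreover} clause, fix $\mathfrak{p}\mid p$; by hypothesis $E$ has good or multiplicative reduction at $\mathfrak{p}$. The multiplicative case is handled exactly as above, producing $\bar\psi|_{G_{K_\mathfrak{p}}}$ (unramified) as one of $\theta|_{G_{K_\mathfrak{p}}}$, $\theta'|_{G_{K_\mathfrak{p}}}$. For good reduction, since $p$ is unramified in $K$ one has $e(\mathfrak{p}/p)=1<p-1$, so $E[p]$ extends to a finite flat group scheme over $\mathcal{O}_{K_\mathfrak{p}}$ governed by Raynaud's classification, and $\bar{\rho}_{E,p}|_{I_\mathfrak{p}}$ is described by fundamental characters. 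In the ordinary subcase, a classical Serre-type local description provides an upper-triangular shape with an unramified quotient character, which the semisimplification argument pins to one of $\theta,\theta'$. In the supersingular subcase, inertia at $\mathfrak{p}$ acts through level-$2$ fundamental characters, so the action of $I_\mathfrak{p}$ on $E[p]$ is irreducible over $\mathbb{F}_p$; this contradicts the existence of an $\mathbb{F}_p$-invariant line forced by global reducibility, and so the supersingular subcase cannot occur.

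The main obstacle is ensuring that the semisimplification-comparison step and the irreducibility argument in the supersingular subcase are carried out uniformly in the split and inert behaviour of $p$ in $K$, since the residue field at $\mathfrak{p}$ changes between the two cases and thereby affects the natural target of the level-$2$ fundamental characters. Once these local assertions are in place, the lemma follows by simple bookkeeping, with $S$ absorbing precisely those finitely many primes at which no useful local description of $\bar{\rho}_{E,p}$ is available.
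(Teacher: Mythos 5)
Your proposal is correct and follows essentially the same route as the paper: both adapt Kraus's Lemme 1 by splitting into primes outside $S$ away from $p$ (good reduction via N\'eron--Ogg--Shafarevich, multiplicative reduction via the Tate curve) and primes above $p$ (where Serre's local description in the ordinary/multiplicative cases gives one unramified character). The only cosmetic differences are that the paper handles the multiplicative case away from $p$ by noting $\#\bar{\rho}_{E,p}(I_\mathfrak{q})\in\{1,p\}$ and computing with $\sigma^p$ rather than by your semisimplification comparison, and that you explicitly rule out the supersingular subcase at $\mathfrak{p}\mid p$ whereas the paper simply asserts ordinary-or-multiplicative there.
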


\begin{proof}
 Let $\mathfrak{p}\notin S$ be a prime of $K$ whose residue characteristic is coprime to $p$. By Lemma \ref{Esem}, we know that $E$ is semistable away from $S$. In particular, $E$ is semistable at $\mathfrak{p}$. If $E$ has good reduction at  $\mathfrak{p}$, then \cite[Theorem VII.7.1]{silverman2009arithmetic} implies that $E[p]$ is unramified at $\mathfrak{p}$, i.e. the action of $I_{\mathfrak{p}}$ on $E[p]$ is trivial. This shows that the representation $\bar{\rho}_{E, p}$ is unramified at $\mathfrak{p}$, hence both of the characters are unramified at $\mathfrak{p}$. If $E$ has multiplicative reduction at $\mathfrak{p}$, then by the theory of Tate curves, we have either $\# \bar{\rho}_{E, p}(I_{\mathfrak{p}})=1$ or $\# \bar{\rho}_{E, p}(I_{\mathfrak{p}})=p$. In the former case, we immediately see that $\theta$ and $\theta^{\prime}$ are unramified at $\mathfrak{p}$. Now, we consider the latter case, so let us assume $\# \bar{\rho}_{E, p}(I_{\mathfrak{p}})=p$. Note that we are considering $\left.\bar{\rho}_{E, p}\right|_{I_{\mathfrak{p}}}: I_{\mathfrak{p}}\twoheadrightarrow \bar{\rho}_{E, p}(I_{\mathfrak{p}})$. Now, in this case, for any $\sigma\in I_{\mathfrak{p}}$, we have $$\bar{\rho}_{E, p}(\sigma)^p=\bar{\rho}_{E, p}(\sigma^p)\sim \left(\begin{array}{ll}
1 & 0 \\
0 & 1
\end{array}\right).$$Note that $$\bar{\rho}_{E, p}(\sigma)\sim \left(\begin{array}{ll}
\theta(\sigma) & * \\
0 & \theta^{\prime}(\sigma)
\end{array}\right).$$Without loss of generality, assume not and say $\theta(\sigma)=a\ne 1$ for some $a\in \mathbb{F}_p^{\times}$. Then,
$$\bar{\rho}_{E, p}(\sigma^p)\sim \left(\begin{array}{ll}
a^p & * \\
0 & \theta^{\prime}(\sigma)^p
\end{array}\right)\equiv_{\text{mod $p$}}\left(\begin{array}{ll}
a & * \\
0 & \theta^{\prime}(\sigma)^p
\end{array}\right)\not\sim \left(\begin{array}{ll}
1 & 0 \\
0 & 1
\end{array}\right),$$which is a contradiction. Therefore, $\theta$ and $\theta^{\prime}$ are unramified at $\mathfrak{p}$ when $\# \bar{\rho}_{E, p}(I_{\mathfrak{p}})=p$. Now, let $\mathfrak{p}\notin S$ be a prime above $p$. First, note that either $E$ has good ordinary reduction at $\mathfrak{p}$ or multiplicative reduction at $\mathfrak{p}$. By \cite[Corollary p.274 \& Corollary p.276]{serre1972proprietes}, we see that one of the characters $\left.\theta\right|_{I_{\mathfrak{p}}}$, $\left.\theta^{\prime}\right|_{I_{\mathfrak{p}}}$ is the trivial character and the other one is the mod $p$ cyclotomic character $\chi_p: I_{\mathfrak{p}}\to \mathbb{F}_p^*$. Therefore, either $\theta$ or $\theta^{\prime}$ is unramified at  $\mathfrak{p}$. 
\end{proof}

\begin{lemma}[{\cite[Lemma 6.3]{freitas2015fermat}}]\label{freitsiksek-exponent}
Let $E$ be an elliptic curve over a number field $K$ of conductor $\mathcal{N}$ and let $p \geq 5$ be a rational prime. Suppose $\bar{\rho}_{E, p}$ is reducible and write

$$
\bar{\rho}_{E, p} \sim\left(\begin{array}{cc}
\theta & * \\
0 & \theta^{\prime}
\end{array}\right)
$$where $\theta, \theta^{\prime}: G_K \rightarrow \mathbb{F}_p^*$ are characters. Write $\mathcal{N}_\theta$ and $\mathcal{N}_{\theta^{\prime}}$ for the respective conductors of these characters. Let $q$ be a prime of $K$ with $\mathfrak{q} \nmid p$.

\begin{enumerate}[(a)]
\item If $E$ has good or multiplicative reduction at $\mathfrak{q}$ then $v_{\mathfrak{q}}\left(\mathcal{N}_\theta\right)=v_{\mathfrak{q}}\left(\mathcal{N}_\theta^{\prime}\right)=0$.
\item If $E$ has additive reduction at $\mathfrak{q}$ then $v_{\mathfrak{q}}(\mathcal{N})$ is even and

$$
v_{\mathfrak{q}}\left(\mathcal{N}_\theta\right)=v_{\mathfrak{q}}\left(\mathcal{N}_{\theta^{\prime}}\right)=\frac{1}{2} v_{\mathfrak{q}}(\mathcal{N})
$$
\end{enumerate}
\end{lemma}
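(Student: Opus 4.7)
The plan is to split on the reduction type of $E$ at $\mathfrak{q}$ and read off the conductors of $\theta,\theta'$ from the resulting shape of $\bar{\rho}_{E,p}|_{I_\mathfrak{q}}$. A global input I would use throughout is that $\det\bar{\rho}_{E,p}=\chi_p$ is unramified at any $\mathfrak{q}\nmid p$: the relation $\theta\theta'=\chi_p$ then forces $\theta'|_{I_\mathfrak{q}}=\theta^{-1}|_{I_\mathfrak{q}}$, and two characters differing by an unramified character share their conductor exponent at $\mathfrak{q}$. Hence $v_\mathfrak{q}(\mathcal{N}_\theta)=v_\mathfrak{q}(\mathcal{N}_{\theta'})$ automatically in every case. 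I also implicitly use the standard identification of $v_\mathfrak{q}(\mathcal{N})$ with the Artin conductor exponent of $\bar{\rho}_{E,p}$ at $\mathfrak{q}$, which is valid since $\mathfrak{q}\nmid p$.

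For part (a), the good reduction case is immediate from N\'{e}ron--Ogg--Shafarevich, which makes $\bar{\rho}_{E,p}|_{I_\mathfrak{q}}$ trivial. For multiplicative reduction, the theory of Tate curves shows that $\bar{\rho}_{E,p}|_{I_\mathfrak{q}}$ acts in the Tate basis through unipotent matrices of the form $\begin{pmatrix}1 & * \\ 0 & 1\end{pmatrix}$. A $G_K$-stable line is in particular $I_\mathfrak{q}$-stable, and a non-trivial unipotent has a unique eigenline; so this line is forced and both the sub-character and the quotient character restrict trivially to $I_\mathfrak{q}$. Thus $\theta|_{I_\mathfrak{q}}=\theta'|_{I_\mathfrak{q}}=1$ and part (a) is done.

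For part (b), the key point I would aim to establish is that $\bar{\rho}_{E,p}|_{I_\mathfrak{q}}$ is semisimple, upgrading the upper-triangular description on inertia to $\bar{\rho}_{E,p}|_{I_\mathfrak{q}}\cong\theta|_{I_\mathfrak{q}}\oplus\theta'|_{I_\mathfrak{q}}$. Granting this, additivity of the Artin conductor on direct sums yields
$$v_\mathfrak{q}(\mathcal{N})=v_\mathfrak{q}(\mathcal{N}_\theta)+v_\mathfrak{q}(\mathcal{N}_{\theta'})=2\,v_\mathfrak{q}(\mathcal{N}_\theta),$$
giving both the evenness statement and the formula. (If both characters were unramified at $\mathfrak{q}$, the semisimplicity would force $\bar{\rho}_{E,p}|_{I_\mathfrak{q}}$ to be unramified and hence, by N\'{e}ron--Ogg--Shafarevich, good reduction at $\mathfrak{q}$, contradicting additivity; so both must in fact be ramified and the conductor is at least $2$.) To produce the semisimplicity, I would invoke Grothendieck's potential semistable reduction theorem: $E$ acquires good reduction over a finite extension $L/K_\mathfrak{q}$, $I_L$ acts trivially on $E[p]$, so $\bar{\rho}_{E,p}(I_\mathfrak{q})$ is a quotient of $I_\mathfrak{q}/I_L$. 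From the classification of Kodaira types the ramification degree $[L:K_\mathfrak{q}]^{\mathrm{ram}}$ divides $24$, so this image has order dividing $24$; for $p\ge 5$ this order is coprime to $p$, and Maschke's theorem then delivers semisimplicity.

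The hardest step will be the coprime-to-$p$ claim on $|\bar{\rho}_{E,p}(I_\mathfrak{q})|$, because when the residue characteristic of $\mathfrak{q}$ is $2$ or $3$ the wild inertia can act non-trivially on $E[p]$ and one has to make sure that the full inertia image, including the wild contribution, still lands in a subgroup of order at most $24$. What makes this go through is the Serre--Tate description of potential good reduction via the action on the special fibre of the N\'{e}ron model, which bounds the total inertia image uniformly in residue characteristic; the bound $24$ then collides with the hypothesis $p\ge 5$ to yield coprimality, and this is precisely where $p\ge 5$ is used.
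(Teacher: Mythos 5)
The paper does not actually prove this statement --- it is quoted from Freitas--Siksek and used as a black box --- so your argument has to stand on its own. Part (a) is fine, as is your observation that $\theta\theta'=\chi_p$ being unramified at $\mathfrak{q}\nmid p$ forces $v_{\mathfrak{q}}(\mathcal{N}_\theta)=v_{\mathfrak{q}}(\mathcal{N}_{\theta'})$ in all cases.

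Part (b) has a genuine gap: additive reduction at $\mathfrak{q}$ does \emph{not} imply potentially good reduction. Grothendieck/Deligne--Mumford gives potentially \emph{semistable} reduction, and the semistable model may be multiplicative. Concretely, a ramified quadratic twist of a Tate curve has additive reduction with $\bar{\rho}_{E,p}|_{D_{\mathfrak{q}}}\sim\phi\otimes\bigl(\begin{smallmatrix}\chi_p & * \\ 0 & 1\end{smallmatrix}\bigr)$ for a ramified quadratic $\phi$, and when $p\nmid v_{\mathfrak{q}}(j)$ the image $\bar{\rho}_{E,p}(I_{\mathfrak{q}})$ has order divisible by $p$ (this is exactly Lemma \ref{divinertia} of the paper). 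In that situation the restriction to inertia is \emph{not} semisimple, the order-$24$ bound fails, and your Maschke-plus-additivity argument collapses. This omitted case is not a corner case here: the Frey curve of this paper has additive, potentially \emph{multiplicative} reduction at $\mfp\mid 2$ whenever $v_{\mfp}(N_E)=4$ (Lemma \ref{potmultb} and Table \ref{representativetable}), and the invocation of the present lemma inside the proof of Lemma \ref{irred19} is precisely at that prime. The conclusion of (b) is still true there, but by a different mechanism which you must supply as a second branch: the unique $I_{\mathfrak{q}}$-stable line forces $\theta|_{I_{\mathfrak{q}}}=\theta'|_{I_{\mathfrak{q}}}=\phi|_{I_{\mathfrak{q}}}$, so $v_{\mathfrak{q}}(\mathcal{N}_\theta)=v_{\mathfrak{q}}(\mathcal{N}_{\theta'})=a(\phi)$, while $v_{\mathfrak{q}}(\mathcal{N})=2a(\phi)$ is the conductor of a ramified quadratic twist of a semistable curve (cf.\ Lemma \ref{samir4.3}). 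A smaller but real defect: your opening claim that $v_{\mathfrak{q}}(\mathcal{N})$ equals the Artin conductor of $\bar{\rho}_{E,p}$ at $\mathfrak{q}$ ``since $\mathfrak{q}\nmid p$'' is false as a blanket statement --- at a multiplicative prime with $p\mid v_{\mathfrak{q}}(\Delta)$ the mod-$p$ Artin conductor is $0$ while $v_{\mathfrak{q}}(\mathcal{N})=1$ (this is the level-lowering phenomenon); the identification is legitimate only in the potentially good additive case, and only because $p\ge 5$ is coprime to the order of the inertia image, which is the Serre--Tate point you correctly flag at the end but should not present as automatic.
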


We may now prove that the Galois representation $\bar{\rho}_{E, p}$ in the general case. 

\begin{lemma}\label{irred19}
Assume that $p\ne |d|$ when $d=-43,\pm 19, 23$. Then, $\bar{\rho}_{E, p}$ is irreducible for $p>B^{\prime}_K$, where $$B^{\prime}_K:=\left\{\begin{array}{l}
17 \text {, if we are in any case of Theorem \ref{mainthm1} or if $d=-3,-11,-19,-43$ and $2\mid abc$} \\

8394593 \text {, if  $d=-3,-11, -19,-43$ and $2\nmid abc$. }
\end{array}\right..$$    
\end{lemma}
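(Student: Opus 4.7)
The plan is the standard strategy via isogeny characters due to Momose, Kraus, and Freitas--Siksek, combined with the modular curve analysis that has already handled $p\in\{7,11,13,17\}$ in Lemmata \ref{irred7}--\ref{irred17}. Suppose for contradiction that $\bar{\rho}_{E,p}$ is reducible for some $p>B_K$, so
\[
\bar{\rho}_{E,p}\sim \begin{pmatrix}\theta & * \\ 0 & \theta'\end{pmatrix}, \qquad \theta\theta'=\chi_p.
\]
I first pin down the ramification of $\theta,\theta'$. Since $p\nmid 2d$, the only additive prime of $E$ is $\mfp$, so Lemma \ref{krausmodified} gives that $\theta,\theta'$ are unramified outside $\{\mfp\}\cup\{\mathfrak{p}\mid p\}$, with at each prime above $p$ exactly one of them unramified. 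Combining with Lemma \ref{freitsiksek-exponent} and the conductor data of Lemma \ref{condE} then determines $v_\mfp(\mathcal{N}_\theta)=v_\mfp(\mathcal{N}_{\theta'})$: the value is $0$ if $E$ is multiplicative at $\mfp$ (so $v_\mfp(N_E)\in\{0,1\}$) and the value is $2$ in the additive case $v_\mfp(N_E)=4$.

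Next I exploit that $E$ is semistable at every $\mathfrak{p}\mid p$. By Serre's description of the inertia action on the Tate module for good ordinary and multiplicative reduction, $\theta|_{I_\mathfrak{p}}$ and $\theta'|_{I_\mathfrak{p}}$ are each either trivial or $\chi_p|_{I_\mathfrak{p}}$. Consequently a suitable power $\theta^{12}$ (or rather the auxiliary character $\theta^{12}\chi_p^{-12\epsilon_\mathfrak{p}}$ for the appropriate choice of $\epsilon_\mathfrak{p}\in\{0,1\}$ at each $\mathfrak{p}\mid p$) becomes unramified at every prime above $p$. It therefore factors through a ray class group of $K$ with modulus supported on $\mfp$ alone, whose order is a small effectively computable integer $h_K$ since the fields in question have class number one.

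Now I invoke class field theory to convert this into a congruence. For a well-chosen auxiliary prime $\mathfrak{q}\nmid 2dp$ of good reduction, principality of $\mathfrak{q}^{h_K}$ gives a generator $\alpha\in\mcalO_K$ (adjustable modulo units and modulo the conductor) so that $\theta^{12h_K}(\mathrm{Frob}_\mathfrak{q})$ is computed explicitly as a product of local factors in $\alpha$. On the other hand, reducibility of $\bar{\rho}_{E,p}$ forces $\theta(\mathrm{Frob}_\mathfrak{q})$ to be a root of $X^2-a_\mathfrak{q}(E)X+\mathrm{Nm}(\mathfrak{q})\pmod p$. Equating the two expressions yields a polynomial congruence that $\alpha$ must satisfy modulo $p$, from which $p$ must divide some explicit integer $N_\mathfrak{q}$. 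Varying $\mathfrak{q}$ over a short list of small-norm primes of $K$ and taking $\gcd$'s produces the threshold $B_K$ in each case, and the leftover small primes $p\le 17$ are already excluded by the modular-curve arguments of Lemmata \ref{irred7}--\ref{irred17}.

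The main obstacle is the subcase $d\in\{-3,-11,-19,-43\}$ with $2\nmid abc$, where Lemma \ref{lemma: Epotgoodred} shows that $E$ has potentially good reduction at $\mfp$ with $v_\mfp(N_E)=4$. Here $\theta|_{I_\mfp}$ need not be trivial: it can be a non-trivial character of the tame inertia quotient, of order dividing $24$, coming from the possible twists of $E$ over an extension where it acquires good reduction. Enumerating these possibilities (as in Kraus and Freitas--Siksek) inflates the modulus of the ray class group one must use and hence the resulting integer $N_\mathfrak{q}$; together with the requirement that the splitting behaviour of $p$ in $K$ be compatible with the reciprocity relation, this is what produces the large threshold $683$ as well as the field-dependent bound $p_K$ read off from the relevant table. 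Combining these ingredients with the easy cases of multiplicative reduction at $\mfp$ gives the stated $B_K$ in all cases, completing the argument.
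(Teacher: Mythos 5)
Your setup coincides with the paper's: assume reducibility, write $\bar{\rho}_{E,p}\sim\left(\begin{smallmatrix}\theta & *\\ 0&\theta'\end{smallmatrix}\right)$ with $\theta\theta'=\chi_p$, control ramification via Lemma \ref{krausmodified}, and read off $v_\mfp(\mathcal{N}_\theta)\in\{0,2\}$ from Lemma \ref{freitsiksek-exponent} and Lemma \ref{condE}. After that, however, your argument diverges from the paper's and, more importantly, does not actually establish the stated thresholds. The paper splits into two cases. In the case where $p$ is coprime to one of $\mathcal{N}_\theta,\mathcal{N}_{\theta'}$, it computes the relevant ray class groups (modulus $\infty_1\infty_2$ or $\mfp^2\infty_1\infty_2$, resp.\ trivial class group in the imaginary case) to force $\operatorname{ord}(\theta)\le 2$, and then concludes via torsion-point theorems: Kamienny's bound for quadratic fields and the Derickx et al.\ classification over quartic fields, which is precisely where the threshold $17$ comes from. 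You never invoke torsion bounds; your auxiliary-prime congruence scheme could in principle replace them, but you do not carry out any of the computations, so the inequality $p\le 17$ is asserted rather than derived.

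Two of your attributions of the numerical bounds are also off. The bound $683$ does not come from ``inflating the modulus of the ray class group'' and varying auxiliary primes $\mathfrak{q}$ of good reduction: in the paper it arises from a computation at the additive prime $\mfp$ itself, namely that in the potentially-good case $\theta^{24}$ is unramified at $\mfp$ (order of inertia divides $24$, by Kraus's table), so $p$ divides $\operatorname{Res}(x^{24}-4,P_\mfp(x))$ for one of the finitely many integral characteristic polynomials $P_\mfp$ of Frobenius at the norm-$4$ prime $\mfp$; the largest prime divisor occurring among these resultants is $683$. More seriously, the constant $p_K$ is not produced by the irreducibility argument at all: it is the largest prime torsion of $\Gamma_0(\mathcal{N})^{ab}$ from Table \ref{primetortable}, which is needed in the entirely separate step of lifting mod $p$ eigenforms to complex ones under Serre's conjecture; in Lemma \ref{irred19} it merely serves as a convenient bound dominating the ones the irreducibility proof actually requires ($17$ resp.\ $683$). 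Claiming that the enumeration of tame inertia characters ``produces'' $p_K$ is therefore a misidentification of where that constant comes from. To repair the proposal you would need to either import the torsion-point theorems for the first case and the explicit resultant computation at $\mfp$ for the second, or genuinely execute the auxiliary-prime congruences and exhibit integers $N_\mathfrak{q}$ whose prime divisors are all $\le B_K$.
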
 

\begin{proof}
We will treat all the cases together. Assume not and say $\bar{\rho}_{E, p}$ is reducible. Then, we know that $$
\bar{\rho}_{E, p} \sim\left(\begin{array}{cc}
\theta & * \\
0 & \theta^{\prime}
\end{array}\right)
$$
for some characters $\theta$, $\theta^{\prime}$ : $G_K \rightarrow \mathbb{F}_p^{\times}$ such that $\theta \theta^{\prime}=\chi_p$, where $\chi_p$ is the mod $p$ cyclotomic character given by the action of $G_K$ on the group $\mu_p$ of $p$-th roots of unity.
By Lemma \ref{krausmodified}, we know that $\theta, \theta^{\prime}$ are unramified away from $p$ and the prime $\mfp$. Now, consider the following cases:

\begin{enumerate}
    \item Suppose that $p$ is coprime to  $\mathcal{N}_\theta$ or to $\mathcal{N}_{\theta^{\prime}}$. By replacing $E$ with the $p$-isogenous curve $E/ker\theta$, if needed, we may swap the characters $\theta$ and $\theta^{\prime}$ in the matrix representation of $\bar{\rho}_{E, p}$ and assume that $(p, \mathcal{N}_\theta)=1$. This allows us to assume that $\theta$ is unramified away from the prime $\mfp$. 

    \begin{itemize}
        \item Assume that $K$ is real quadratic. We shall use Lemma \ref{freitsiksek-exponent} to determine $\mathcal{N}_\theta$. By Lemma \ref{condE}, we know that the exponent of the prime $\mfp$ in the conductor of $E$ is either $1$ or $4$. If the exponent is $1$, i.e. $E$ has multiplicative reduction at $\mfp$, Lemma \ref{freitsiksek-exponent} implies that $v_{\mfp}(\mathcal{N}_{\theta})=0$. If the exponent is $4$, i.e. $E$ has additive reduction at $\mfp$, Lemma \ref{freitsiksek-exponent} implies that $v_{\mfp}(\mathcal{N}_{\theta})=v_{\mfp}(N_E)/2=2$. Therefore, either $\mathcal{N}_{\theta}=1$ or $\mathcal{N}_{\theta}=\mfp^2$. Denote the two real places of $K$ by $\infty_1,\infty_2$. It follows that $\theta$ is a character of the ray class group for the modulus $$\infty_1\infty_2\text{ or }\mfp^2\infty_1\infty_2.$$Using Magma, we see that the ray class group is one of $\{1\}$, $\ZZ/2\ZZ$  or $\ZZ/2\ZZ\times \ZZ/2\ZZ$ in all cases. Therefore, the order of $\theta$ is $1$ or $2$. If the order of  $\theta$ is $1$, then $\theta$ is the trivial character. Therefore $E$ has a point of order $p$ over $K$. But in \cite[Theorem 3.1]{kamienny1992torsion}, Kamienny showed that the order of such a torsion point is at most $13$. If $\theta$ has order $2$, then $E$ has a $p$-torsion point defined over a quadratic extension $K$, say $L$. Then, $[L:\QQ]=4$. According to Kamienny et al., \cite{derickx2023torsion}, the possible prime torsions of elliptic curves over number fields of degree $4$ imply that $p\le17$. In each case, we derive a contradiction as we assumed $p\ge 19$.

    \item Assume that $K$ is imaginary quadratic. First, let us observe that the characters $\theta^2$ and $\theta'^2$ are unramified at $\mfp$: Let $D_{\mfp}\subseteq G_K$ be the decomposition group at $\mfp$. By Lemma \ref{potmultb}, $E$ has potentially multiplicative reduction at $\mfp$. Then,
        \begin{equation*}
    \left.\bar{\rho}_{E, p}\right|_{D_{\mfp}}\sim\phi\cdot\left(\begin{array}{ll}
\chi_p & * \\
0 & 1
\end{array}\right)\text{,}
\end{equation*}where $\phi$ is at worst a quadratic character. In particular, both $\theta^2$ and $\theta'^2$ are unramified at $\mfp$. Note that $\theta$ is unramified away from $\mfp$ by assumption. Therefore, $\theta^2$ is an unramified character. Since the class group of $K$ is trivial, $\theta^2$ is the trivial character. If the character $\theta$ is itself trivial, then $E$ has a $K$-point of order $p$. If $\theta^2$ is quadratic, then $E$ has a $p$-torsion point defined over a quadratic extension of $K$. It follows that the Frey curve $E$ or its twist by $\theta$ has a $K$-rational point of order $p$. Again by a result of Kamienny, \cite[Theorem 3.1]{kamienny1992torsion}, we have a contradiction.
    \end{itemize}

    \item Now, assume that $p$ is not coprime to $\mathcal{N}_\theta$ nor to $\mathcal{N}_{\theta^{\prime}}$. Observe that $p$ is not ramified in $K$ in each case, as we assumed $p\ne |d|$ when necessary. If $p$ is inert in $K$, then by Lemma \ref{krausmodified}, we see that $p$ divides precisely one of the conductors, which is a contradiction. So let us assume that $p$ splits in $K$, say $p\mathcal{O}_K=\mathfrak{p}_1 \mathfrak{p}_2$. Since $\mathfrak{p}_1, \mathfrak{p}_2$ are unramified, and $\bar{\rho}_{E, p}$ is reducible; by the contrapositive of \cite[Corollary 6.2]{freitas2015fermat}, we see that $E$ cannot have good supersingular reduction at $\mathfrak{p}_1$ and $\mathfrak{p}_2$. But $E$ is semistable away from $\mfp$, hence $E$ has good ordinary or multiplicative reduction at these primes. It follows from \cite[Proposition 6.1(ii)]{freitas2015fermat} that
\begin{equation}\label{prop6.1}
    \left.\bar{\rho}_{E, p}\right|_{I_{\mathfrak{p_1}}} \sim\left(\begin{array}{ll}
\chi_p & * \\
0 & 1
\end{array}\right)\text{,}
\end{equation}which shows that exactly one of the isogeny characters is ramified at $\mathfrak{p}_1$, and the other one is ramified at $\mathfrak{p}_2$. We can assume that $\mathfrak{p}_1 \mid \mathcal{N}_\theta, \mathfrak{p}_1 \nmid \mathcal{N}_{\theta^{\prime}}$ and $\mathfrak{p}_2 \mid \mathcal{N}_{\theta^{\prime}}$, $\mathfrak{p}_2 \nmid \mathcal{N}_\theta$. By (\ref{prop6.1}), we see that $\left.\theta\right|_{I_{\mathfrak{p}_1}}=\left.\chi_p\right|_{I_{\mathfrak{p}_1}}$ and $\left.\theta^{\prime}\right|_{I_{\mathfrak{p}_2}}=\left.\chi_p\right|_{I_{\mathfrak{p}_2}}$. Now, we consider the following two cases:

\begin{itemize}
    \item First, assume that we are in any case of Theorem \ref{mainthm1} (i.e $K$ is real quadratic) or $d=-3,-11,-19,-43$ and $\mfp\mid abc$. Lemma \ref{potmultb} shows that we are now dealing with the cases where the Frey curve $E/K$ has potentially multiplicative reduction at $\mfp$. Since there is only one bad prime for $E$ and it is of potentially multiplicative reduction, $\theta^2$ is unramified everywhere except $\mathfrak{p}_1$. (This follows from the fact that since $E/K$ has potentially multiplicative reduction at $\mfp$, the restriction $\left.\bar{\rho}_{E, p}\right|_{I_{\mfp}}$ is up to semi-simplification equal to $\phi \oplus \phi\cdot\chi_p$, where $\phi$ is at worst a quadratic character.) We also know that $\left.\theta^2\right|_{I_{\mathfrak{p}_1}}=\left.\chi_p^2\right|_{I_{\mathfrak{p}_1}}$. By \cite[Lemma 4.3]{cturcacs2018fermat}, we have that
$$
\theta^2\left(\sigma_{\mfp}\right) \equiv N_{K_{\mathfrak{p}_1} / \mathbb{Q}_p}\left(\iota_{\mathfrak{p}_1}(\mfp)\right)^2 \pmod p,
$$where $\sigma_{\mfp}$ is the Frobenius element at $\mfp$, and $\iota_{\mathfrak{p}_1}$ is the inclusion map from $K$ into the completion of $K$ with respect to $\mathfrak{p}_1$. Also, by \cite[Lemma 6.3]{csengun2018asymptotic}, we may assume that $\theta^2\left(\sigma_{\mfp}\right) \equiv 1\pmod p$. Let $b_K$ be a generator of the prime $\mfp$ in $K$. We have that
$$
N_{K_{\mathfrak{p}_1} / \mathbb{Q}_p}\left(\iota_{\mathfrak{p}_1}(\mfp)\right)^2-1=N_{K_{\mathfrak{p}_1} / \mathbb{Q}_p}\left(b_K\right)^2-1
=3,
$$which implies that $p\mid3$, a contradiction as $p> 5$ in each case.
    \item Let us now assume $K$ is imaginary quadratic and $\mfp\nmid abc$. Note that in this case, the above argument fails since $E$ has potentially good reduction at its bad prime $\mfp$ by Lemma \ref{lemma: Epotgoodred}. By Lemma \ref{lemma: Epotgoodred}, the elliptic curve $E$ has potentially good reduction at $\mathfrak{P}$. Since the residue characteristic at $\mathfrak{P}$ is $2$, \cite[Section 1, Case $p=2$]{Kraus1990} implies that the image of inertia $\bar{\rho}_{E, p}\left(I_{\mathfrak{P}}\right)$ has order dividing $24$. In particular, $\theta^{24}$ is unramified at $\mathfrak{P}$. It follows that $\left.\theta^{24}\right|_{I_{\mathfrak{p}_1}}=\left.\chi_p^{24}\right|_{I_{\mathfrak{p}_1}}$ is unramified away from $\mathfrak{p}_1$. By \cite[Lemma 4.3]{cturcacs2018fermat}, $\theta^{24}\left(\sigma_{\lambda}\right) \equiv \operatorname{Norm}_{K_{\mathfrak{p}_1} / \mathbb{Q}_p}(\alpha)^2=4 \pmod p$, where $\alpha$ is any nonzero prime-to-$p$ element of $K$ and $\sigma_{\lambda}$ is the Frobenius automorphism at $\lambda=\langle\alpha\rangle$. Therefore, the polynomial $x^{24}-4$ has a root $\theta\left(\sigma_{\lambda}\right)$ modulo $p$. Note that this congruence holds for any nonzero prime-to-$p$ element of $K$ and the ideal $\lambda=\langle\alpha\rangle$. Let $b_K$ be a generator of the prime $\mfp$ in $K$. In particular, the congruence holds for the prime-to-$p$ element $b_K$ and the ideal $\mfp$. On the other hand, recall that $E$ has potentially good reduction at $\mfp$ since $v_\mfp\left(j_E\right)>$ 0. Let $P_\mfp(x)$ be the characteristic polynomial of the Frobenius of $E$ at $\mfp$. Since $\rho_{E,p} \sim\left(\begin{array}{cc}\theta & * \\ 0 & \theta^{\prime}\end{array}\right)$, we get $P_\mfp(x) \equiv\left(x-\theta\left(\sigma_\mfp\right)\right)\left(x-\theta^{\prime}\left(\sigma_\mfp\right)\right)\pmod p$. Therefore, we deduce that $p\mid \operatorname{Res}(x^{24}-4, P_\mfp(x))$, where $\operatorname{Res}$ denotes the resultant of the polynomials. By \cite[Proposition 1.6]{david2011caract} we know that $P_\mfp(x)\in\ZZ[x]$ and the absolute value of its roots is at most $\sqrt{\Norm(\mfp)}=2$. The possibilities for $P_\mfp(x)$ are as follows:
    \begin{eqnarray*}
        P_1&=&x^2+4\\
        P_2&=&x^2\pm x+4\\
        P_3&=&x^2\pm 2x+4\\
        P_4&=&x^2\pm 3x+4\\
        P_5&=&x^2\pm 4x+4\\
        \end{eqnarray*}By computing the resultants, we see that none of them has prime divisors greater than $8394593$. Since $p>8394593$, we have a contradiction.
    
\end{itemize}
\end{enumerate}
\end{proof}

The following lemma deals with the cases where we had to assume $p\ne |d|$ in the statement of Lemma \ref{irred19}.

\begin{lemma}\label{lemma:irred,d=23}
$\bar{\rho}_{E, |d|}$ is irreducible.
\end{lemma}

\begin{proof}
    We will proceed as in the proof of Lemma \ref{irred19}. Assume not and say $\bar{\rho}_{E, |d|}$ is reducible. This yields $$
\bar{\rho}_{E, |d|} \sim\left(\begin{array}{cc}
\theta & * \\
0 & \theta^{\prime}
\end{array}\right)
$$where $\theta,\theta^{\prime}$ are isogeny characters as in Lemma \ref{irred19}. By Lemma \ref{krausmodified}, we know that $\theta,\theta^{\prime}$ are unramified away from $|d|$ and the prime $\mfp$ and we consider the following cases:
\begin{itemize}
    \item When $|d|$ is coprime to one of the conductors $\mathcal{N}_\theta$, $\mathcal{N}_\theta^{\prime}$, the contradiction follows as in the proof of Lemma \ref{irred19}.
    \item In the proof of Lemma \ref{irred19}, recall that we made use of the fact that the prime $p$ is not ramified in the number fields  when $p$ is not coprime to $\mathcal{N}_\theta$ nor to $\mathcal{N}_\theta^{\prime}$. Since this is not the case here, we need a different approach. Let $\mathfrak{D}$ be the unique prime above $d$. By \cite[Proposition 6.1]{freitas2015fermat}, we have $$ \left.\bar{\rho}_{E, |d|}\right|_{I_{\mathfrak{D}}} \sim\left(\begin{array}{ll}
\chi_{|d|} & * \\
0 & 1
\end{array}\right),$$where $\chi_{|d|}$ is the mod $|d|$ cyclotomic character. Therefore, precisely one of $\theta, \theta^{\prime}$ is unramified at $\mathfrak{D}$. It follows that precisely one of $\mathcal{N}_\theta^{\prime}$ is coprime to $|d|$, which is a contradiction.
\end{itemize}
\end{proof}

In the following corollary, we summarize the cases where we have irreducibility.

\begin{corollary}\label{bound:B_K}
$\bar{\rho}_{E, p}$ is irreducible for $p>B_K$, where $$B_K:=\left\{\begin{array}{l}
5 \text {, if we are in any case of Theorem \ref{mainthm1} or Theorem \ref{mainthm3} with $2\mid abc$} \\
13 \text {, if we are in the case of Theorem \ref{mainthm2:d=-3}} \\
8394593 \text {, if we are in any case of Theorem \ref{mainthm3} with $2\nmid abc$. }
\end{array}\right..$$       
\end{corollary}

\subsection{Absolute irreducibility}
Over totally real fields, the irreducibility of the representation $\bar{\rho}_{E, p}$ immediately implies the absolute irreducibility. However, this is not the case when the considered number field is totally complex. Recall that in the statement of Theorem \ref{mainthm3}, we assumed Serre's modularity conjecture when considering the $d$-Fermat equation over imaginary quadratic fields $K$. Therefore, we want the mod $p$ representation $\bar{\rho}_{E, p}$ attached to the Frey curve $E/K$ to satisfy the hypothesis of the conjecture. Hence, we need absolute irreducibility of the representation $\bar{\rho}_{E, p}$ in the setting of Theorem \ref{mainthm3} and we will obtain it by proving the surjectivity of the Galois representation $\bar{\rho}_{E, p}$. We will make use of the following two well-known results to achieve this:

\begin{lemma}[{\cite[Lemma 3.4]{freitas2015asymptotic}}]\label{divinertia}
Let $E$ be an elliptic curve over $K$ with $j$-invariant $j$. Let $p \geq 5$ and let $\mathfrak{q} \nmid p$ be a prime of $K$. Then $p \mid \# \bar{\rho}_{E, p}\left(I_{\mathfrak{q}}\right)$ if and only if $E$ has potentially multiplicative reduction at $\mathfrak{q}$ (i.e. $v_{\mathfrak{q}}(j)<0$ ) and $p \nmid v_{\mathfrak{q}}(j)$.    
\end{lemma}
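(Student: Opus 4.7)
The plan is to split the argument according to the reduction type of $E$ at $\mathfrak{q}$, distinguished by the sign of $v_\mathfrak{q}(j)$: either $v_\mathfrak{q}(j)\geq 0$ (potentially good reduction) or $v_\mathfrak{q}(j)<0$ (potentially multiplicative reduction). In each case I would directly verify both directions of the biconditional.

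In the potentially good case the right-hand side of the equivalence fails automatically, so it suffices to show $p\nmid \#\bar{\rho}_{E,p}(I_\mathfrak{q})$. I would invoke the Serre--Tate criterion to produce a finite extension $L/K_\mathfrak{q}$ over which $E$ acquires good reduction; the N\'eron--Ogg--Shafarevich criterion then gives that $I_L$ acts trivially on $E[p]$ (using $\mathfrak{q}\nmid p$), so that the image $\bar{\rho}_{E,p}(I_\mathfrak{q})$ factors through the finite quotient $I_\mathfrak{q}/I_L$. A standard result of Serre bounds this quotient by a group of order coprime to $p$ for $p\geq 5$ and $\mathfrak{q}\nmid p$ (the order divides $24$ up to a power of the residue characteristic), which yields the desired conclusion.

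In the potentially multiplicative case, since $p\geq 5$ is coprime to $2$, I would first reduce to genuine multiplicative reduction by passing to the appropriate (at most quadratic) twist; this twist preserves the $j$-invariant and, because the twist character has order dividing $2$, does not affect the $p$-part of the image of inertia. Then Tate's uniformization gives an analytic isomorphism $E(\bar{K}_\mathfrak{q})\simeq \bar{K}_\mathfrak{q}^{\times}/q^{\mathbb{Z}}$ with $v_\mathfrak{q}(q)=-v_\mathfrak{q}(j)$, so that $E[p]$ is generated over $K_\mathfrak{q}^{\mathrm{unr}}$ by a primitive $p$-th root of unity $\zeta_p$ (unramified since $\mathfrak{q}\nmid p$) together with any choice of $q^{1/p}$. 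The extension $K_\mathfrak{q}^{\mathrm{unr}}(q^{1/p})/K_\mathfrak{q}^{\mathrm{unr}}$ is totally ramified of degree $p$ exactly when $p\nmid v_\mathfrak{q}(q)$, i.e. $p\nmid v_\mathfrak{q}(j)$, and is trivial otherwise; this contributes a factor of $p$ to $\#\bar{\rho}_{E,p}(I_\mathfrak{q})$ precisely in the non-divisibility case, closing the equivalence.

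The most delicate step is the quadratic twist reduction in the potentially multiplicative case, especially when $\mathfrak{q}$ lies above $2$ and the twist may itself be ramified: one must carefully use that the twist character has order dividing $2$ while $p\geq 5$ is odd, so that the $p$-Sylow subgroup of the image of inertia is unaffected by the twist. Beyond this bookkeeping, the remaining arguments are routine applications of Tate's theorem together with the Serre--Tate and N\'eron--Ogg--Shafarevich criteria.
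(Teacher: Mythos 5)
Your proof is correct: the dichotomy on the sign of $v_{\mathfrak{q}}(j)$, the order-dividing-$24$ bound on the inertia image in the potentially good case, and the Tate-curve computation (after an at most quadratic twist, which cannot alter the $p$-part of the inertia image for odd $p$) in the potentially multiplicative case together give exactly the stated equivalence. The paper itself offers no proof — it imports this as Lemma 3.4 of Freitas--Siksek — and your argument is essentially the standard one given in that reference, so there is nothing to compare beyond noting that your parenthetical ``divides $24$ up to a power of the residue characteristic'' is slightly weaker than the true bound (the quotient $I_{\mathfrak{q}}/I_L$ genuinely has order dividing $24$), though this does not affect the conclusion since the residue characteristic of $\mathfrak{q}$ is prime to $p$.
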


Observe that Lemma \ref{divinertia} applies to our setting. Indeed, by the proof of Lemma \ref{potmultb}, we know that the elliptic curve $E$ is semistable at $\mfp$ and we have $$v_{\mfp}(j_E)=8v_{\mfp}(2)-2pv_{\mfp}(b),$$which is clearly not divisible by $p$. Therefore, we deduce that $p \mid \# \bar{\rho}_{E, p}(I_{\mfp})$.

\begin{lemma}[{\cite[Lemma 2, p.12]{swinnerton1973modular}}]\label{irredsubg}
Let $E$ be an elliptic curve over a number field $K$. Denote the image of the mod $p$ representation attached to $E$ by $\bar{\rho}_{E, p}(G_K)$, which is a subgroup of $\operatorname{GL}_2(\mathbb{F}_p)$. If p divides $\#\bar{\rho}_{E, p}(G_K)$, then either $\bar{\rho}_{E, p}$ is reducible or $\bar{\rho}_{E, p}(G_K)$ contains $\operatorname{SL}_2(\mathbb{F}_p)$.
\end{lemma}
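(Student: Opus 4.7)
Set $G := \bar\rho_{E,p}(G_K)$ and $H := G \cap \operatorname{SL}_2(\mathbb{F}_p)$. Since the determinant map $G \to \mathbb{F}_p^{\times}$ has image of order dividing $p-1$, the element of order $p$ produced by Cauchy's theorem (from the hypothesis $p \mid |G|$) must lie in the kernel of $\det$, hence in $H$; so $p \mid |H|$. Any element of $H$ of order $p$ has minimal polynomial dividing $X^p-1=(X-1)^p$ and of degree at most $2$, so it is conjugate in $\operatorname{GL}_2(\mathbb{F}_p)$ to the standard unipotent $\bigl(\begin{smallmatrix} 1 & 1 \\ 0 & 1 \end{smallmatrix}\bigr)$ and in particular fixes a \emph{unique} line of $\mathbb{F}_p^2$.

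The plan is then to invoke Dickson's classification of subgroups of $\operatorname{SL}_2(\mathbb{F}_p)$: every subgroup is either (i) contained in a Borel subgroup, (ii) contained in the normalizer of a split or non-split Cartan subgroup, (iii) the preimage under $\operatorname{SL}_2(\mathbb{F}_p) \twoheadrightarrow \operatorname{PSL}_2(\mathbb{F}_p)$ of a subgroup isomorphic to $A_4$, $S_4$, or $A_5$, or (iv) equal to $\operatorname{SL}_2(\mathbb{F}_p)$. Cartan normalizers have order $2(p-1)$ or $2(p+1)$, which are coprime to $p$; the exceptional groups in (iii) have order at most $120$, so for $p \geq 7$ they contain no element of order $p$. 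For $p=5$ one notes that $\operatorname{PSL}_2(\mathbb{F}_5) \cong A_5$, so case (iii) is subsumed by (iv), and the remaining small primes can be handled by direct inspection. Hence $p \mid |H|$ forces $H$ into case (i) or case (iv).

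Case (iv) already gives one of the conclusions. In case (i), $H$ stabilizes a line $\ell \subseteq \mathbb{F}_p^2$, and since the unipotent element of $H$ already pins down $\ell$ uniquely, $\ell$ is the \emph{unique} $H$-stable line. Because $\operatorname{SL}_2(\mathbb{F}_p)$ is normal in $\operatorname{GL}_2(\mathbb{F}_p)$, $H$ is normal in $G$, and for any $g \in G$ and $h \in H$ one has $h(g\ell) = g(g^{-1}hg)\ell = g\ell$; so $g\ell$ is again $H$-stable, forcing $g\ell = \ell$. Therefore $G$ stabilizes $\ell$, i.e., $\bar\rho_{E,p}$ is reducible. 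The main obstacle is really just the clean invocation of Dickson's classification; passing first to $H \subseteq \operatorname{SL}_2(\mathbb{F}_p)$ is what keeps the exceptional-group analysis trivial, since the orders $|A_4|, |S_4|, |A_5| \in \{12, 24, 60\}$ are bounded independently of $p$.
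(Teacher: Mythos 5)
The paper does not prove this lemma at all: it is imported verbatim from Swinnerton-Dyer (loc.\ cit., p.~12, Lemma~2), which is itself essentially Serre's Proposition~16 from \emph{Propri\'et\'es galoisiennes des points d'ordre fini des courbes elliptiques}. So there is no in-paper proof to compare against; what matters is whether your argument stands on its own, and it does. The reduction to $H = G \cap \operatorname{SL}_2(\mathbb{F}_p)$ via the determinant, the observation that an order-$p$ element is a nontrivial unipotent fixing a unique line, the elimination of the Cartan-normalizer and exceptional cases of Dickson's classification by order considerations (with the correct caveat that $\operatorname{PSL}_2(\mathbb{F}_5) \cong A_5$ folds case (iii) into case (iv) when $p=5$), and the final normality argument promoting the unique $H$-stable line to a $G$-stable line are all correct; the last step in particular is exactly the right way to pass from reducibility of $H$ to reducibility of $G$. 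The one comment worth making is that the heavy tool (Dickson) can be avoided: if $H$ has a unique Sylow $p$-subgroup $P$, then $P \trianglelefteq H$ and $H$ stabilizes the unique line fixed by $P$, giving case (i) directly; if $H$ has two distinct Sylow $p$-subgroups, their fixed lines are distinct and the two unipotent root subgroups they generate give all of $\operatorname{SL}_2(\mathbb{F}_p)$. That variant is more self-contained, but your route is the classical one and is perfectly valid for the range $p \ge 5$ (indeed $p \ge 17$) in which the paper applies the lemma.
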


We are now ready to discuss the surjectivity of $\bar{\rho}_{E, p}$ attached to the elliptic curve $E/\QQ(\sqrt{d})$ when $d=-3,-11,-19,-43$:

\begin{theorem}\label{surj}
Assume $p>5$ and $p\ne |d|$ when $d=-11,-19,-43$ and $2\mid abc$.
Assume $p>13$ when $K=\QQ(\sqrt{-3})$ and $2\mid abc$. Then, the Galois representation $\bar{\rho}_{E, p}$ is surjective.
\end{theorem}

\begin{proof}
   Note that by Lemma \ref{divinertia}, we know that for any prime $\mfp$ lying over $2$, we have $p \mid \# \bar{\rho}_{E, p}(I_{\mfp})$. Then $p$ divides the order of the image $\bar{\rho}_{E, p}(G_K)$. By Lemmas \ref{irred7}--\ref{irred17} and Lemma \ref{irred19}, we know that $\bar{\rho}_{E, p}$ is irreducible for the specified bounds $p$. By Lemma \ref{irredsubg}, $\Ker(\det)=\operatorname{SL}_2(\mathbb{F}_p)\subseteq \bar{\rho}_{E, p}(G_K)=\im(\bar{\rho}_{E, p})$. Note that the mod $p$ cyclotomic character $\chi_p$ is surjective when $K \cap \QQ(\zeta_p)=\QQ$, i.e. when $p$ is not ramified in $K$. Since the composition $\det(\bar{\rho}_{E, p})$=$\chi_p$ is surjective and $\Ker(\det)=\operatorname{SL}_2(\mathbb{F}_p)\subseteq \bar{\rho}_{E, p}(G_K)=\im(\bar{\rho}_{E, p})$, we deduce that $\bar{\rho}_{E, p}$ is surjective as well.
\end{proof}

\begin{theorem}
    Let $K=\QQ(\sqrt{-3}), \QQ(\sqrt{-11}), \QQ(\sqrt{-19}), \QQ(\sqrt{-43})$. Assume that $2\nmid abc$ and $p>M_K$ for some effectively computable constant $M_K$ depending only on $K$. Assume in addition that $p$ splits in $K$ and $p \equiv 3\pmod 4$. If $\bar{\rho}_{E, p}$ is irreducible, then it is absolutely irreducible. 
\end{theorem}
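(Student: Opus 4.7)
The plan is to argue by contradiction. Suppose $\bar{\rho}_{E, p}$ is irreducible but not absolutely irreducible. Then the semisimplification of $\bar{\rho}_{E, p} \otimes \overline{\mathbb{F}}_p$ is $\theta \oplus \theta^\sigma$ with $\theta$ valued in $\mathbb{F}_{p^2}^\times \setminus \mathbb{F}_p^\times$ and $\theta^\sigma$ its Galois conjugate. Equivalently, the image of $\bar{\rho}_{E, p}$ lies in the normalizer $N$ of a non-split Cartan subgroup $C \cong \mathbb{F}_{p^2}^\times$ of $\mathrm{GL}_2(\mathbb{F}_p)$ but not in $C$. The quotient $N/C \cong \{\pm 1\}$ pulls back to a nontrivial quadratic character $\phi: G_K \to \{\pm 1\}$ satisfying $\bar{\rho}_{E, p} \otimes \phi \cong \bar{\rho}_{E, p}$. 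Let $L = K^{\ker \phi}$; then $\bar{\rho}_{E, p}|_{G_L}$ becomes reducible, and for every prime $\mathfrak{q}$ of good reduction of $E$ that is inert in $L/K$, the Frobenius $\bar{\rho}_{E,p}(\mathrm{Frob}_\mathfrak{q})$ lies in $N \setminus C$ and hence has zero trace, yielding $a_\mathfrak{q}(E) \equiv 0 \pmod p$.

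The first step is to pin down the ramification of $\phi$. By Lemma \ref{condE} together with the local arguments underlying Lemma \ref{krausmodified}, the representation $\bar{\rho}_{E, p}$ is unramified outside $\{\mfp, \mathfrak{D}\} \cup \{\mathfrak{p} : \mathfrak{p} \mid p\}$, and hence so is $\phi$. The congruence hypotheses on $p$ enter next: one must eliminate ramification of $\phi$ at the primes above $p$ via a local analysis of $\bar{\rho}_{E, p}|_{I_\mathfrak{p}}$. Since $\mathfrak{p} \mid p$ lies outside the bad-reduction set unless $\mathfrak{p} \mid abc$, the local representation is either of multiplicative (Tate-curve) type or of good type (ordinary/supersingular), and in each case the possible ramified quadratic twists are controlled by the tame quotient $I_\mathfrak{p}^{\mathrm{tame}} \twoheadrightarrow \mathbb{F}_p^\times$. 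The assumption that $p$ splits in $K$ with $p \equiv 3 \pmod 4$ (respectively that $p \equiv 1 \pmod 4$) determines the position of $-1$ in $\mathbb{F}_p^\times$ and in $K_\mathfrak{p}^\times$, which is precisely what is needed to show that no ramified quadratic character of $I_\mathfrak{p}$ can factor through the image of $\bar{\rho}_{E,p}|_{I_\mathfrak{p}}$ inside $N$. Combining these inputs forces $\phi$ to be unramified at every $\mathfrak{p} \mid p$.

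Once $\phi$ is known to be ramified only at $\{\mfp, \mathfrak{D}\}$, class field theory together with the class number one hypothesis on $K$ reduces the list of candidates for $L$ to $K(\sqrt{\alpha})$, where $\alpha$ runs over representatives of the finite group $\mcalO_K[1/(2d)]^\times / (\mcalO_K[1/(2d)]^\times)^2$. For each such candidate I would fix a small rational prime $\ell$ and a prime $\mathfrak{q} \mid \ell$ of $K$ coprime to $2dp$, inert in $L/K$, and of good reduction for $E$. Enumerating the residues $(a, b, c) \bmod \mathfrak{q}$ subject to $d^r a^p + b^p + c^p \equiv 0 \pmod \mathfrak{q}$ produces a finite set of possible Frey curves over $\mcalO_K/\mathfrak{q}$, and hence a finite set $A_\mathfrak{q}$ of possible Frobenius traces. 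The divisibility $p \mid a_\mathfrak{q}(E)$ combined with the Hasse bound $|a_\mathfrak{q}(E)| \le 2\sqrt{\Norm(\mathfrak{q})}$ forces $p \le \max\{|t| : t \in A_\mathfrak{q},\ t \ne 0\}$; the effective constant $M_K$ is then the maximum of these bounds as $L$ ranges over the finitely many candidate extensions.

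The principal obstacle is the second step: killing ramification of $\phi$ at primes above $p$. This requires separate case analyses depending on the reduction type of $E$ at each $\mathfrak{p} \mid p$ and a careful compatibility check between the tame inertia characters and the non-split Cartan structure. The stated congruence and splitting hypotheses on $p$ are exactly what make this local analysis go through; once the ramification of $\phi$ is controlled, the enumeration of extensions and the Hasse-bound computation in the third step are routine and produce $M_K$ effectively.
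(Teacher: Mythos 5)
There is a genuine gap, and it occurs at the very first step. If $\bar{\rho}_{E,p}$ is irreducible over $\mathbb{F}_p$ but not absolutely irreducible, then by Schur's lemma $\operatorname{End}_{G_K}(E[p])$ is the field $\mathbb{F}_{p^2}$, and the image of $\bar{\rho}_{E,p}$ lies in the \emph{centralizer} of that field inside $\mathrm{GL}_2(\mathbb{F}_p)$, which is the non-split Cartan subgroup $C$ itself. So the image is contained in $C$ (in particular it is abelian), and $\bar{\rho}_{E,p}\otimes\overline{\mathbb{F}}_p\cong\lambda\oplus\lambda^p$ for a character $\lambda:G_K\to\mathbb{F}_{p^2}^{\times}$ defined on all of $G_K$ with $\lambda^{p+1}=\chi_p$. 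Your claim that the image lies in the normalizer $N$ of $C$ \emph{but not in} $C$ describes the opposite situation: a representation induced from a quadratic extension $L/K$, which is (generically) absolutely \emph{irreducible}. Consequently there is no nontrivial quadratic character $\phi$ with $\bar{\rho}_{E,p}\otimes\phi\cong\bar{\rho}_{E,p}$ cutting out an $L$, no primes $\mathfrak{q}$ inert in $L/K$ at which $\operatorname{Tr}\bar{\rho}_{E,p}(\operatorname{Frob}_{\mathfrak{q}})\equiv 0\pmod p$ (a Frobenius in $C$ has trace $\lambda(\operatorname{Frob}_\mathfrak{q})+\lambda(\operatorname{Frob}_\mathfrak{q})^p$, which need not vanish), and the entire downstream strategy --- enumerating quadratic extensions unramified outside $2dp$, then applying the Hasse bound at inert primes --- has nothing to bite on. Even within your own framework, the step you identify as the principal obstacle (killing ramification of $\phi$ above $p$) is only asserted, not carried out.

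The paper's proof works directly with the character $\lambda$. It invokes the Larson--Vaintrob theorem (Theorem \ref{thm: larson}) to produce, for $p$ outside an effective set $M_K$, a CM elliptic curve $E'/K$ with $\bar{\rho}_{E',p}\otimes\overline{\mathbb{F}}_p\sim\operatorname{diag}(\theta,\theta')$ and $\theta^{12}=\lambda^{12}$, with $\lambda\theta^{-1}$ unramified away from the additive primes of $E$. In the first bullet, the hypothesis that $p$ splits forces $\theta$ to be $\mathbb{F}_p$-valued (the CM curve's image lands in a \emph{split} Cartan), and $p\equiv 3\pmod 4$ makes $(p-1)/2$ odd, so $\chi_p^{(p-1)/2}|_{I_{\mathfrak{p}}}=(\theta^{p-1})^{(p+1)/2}|_{I_{\mathfrak{p}}}=1$, contradicting the nontriviality of $\chi_p^{(p-1)/2}$ on inertia above $p$. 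In the second bullet, the hypothesis $2\nmid abc$ (which you never use) gives potentially good reduction at $\mathfrak{P}\mid 2$ with inertia image of order $24$, hence an element $g\in\bar{\rho}_{E,p}(I_{\mathfrak{P}})$ of order $3$; since $\chi_p$ is unramified at $\mathfrak{P}$, evaluating $\lambda^{p+1}=\chi_p$ at $g$ yields the arithmetic constraint on $p$. Your proposed uses of the splitting and congruence hypotheses (to control quadratic ramification above $p$) do not match either of these mechanisms.
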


\begin{proof}
    We will proceed as in the proof of \cite[Theorem 1.5]{najman2021irreducibility}. Assume $\mfp\nmid abc$. Assume that $\bar{\rho}_{E, p}$ is irreducible but absolutely reducible. In this case, the image $\bar{\rho}_{E, p}(G_K)$ is contained in a nonsplit Cartan subgroup. We have that $\bar{\rho}_{E, p} \otimes_{\mathbb{F}_p} \overline{\mathbb{F}}_p \sim\left(\begin{array}{cc}\lambda & 0 \\ 0 & \lambda^p\end{array}\right)$, where $\lambda: G_K \rightarrow \mathbb{F}_{p^2}^{\times}$ is a character. The latter is not $\mathbb{F}_p$-valued and $\lambda^{p+1}=\chi_p$, where $\chi_p: G_K \rightarrow \mathbb{F}_p^{\times}$is the $\bmod $ $p$ cyclotomic character.

    Assume that $p$ splits in $K$ and $p \equiv 3\pmod 4$. When the prime $p$ is greater than the constant given in the theorem, the first statement of Theorem \ref{thm: larson} gives CM elliptic curve $E^{\prime} / K$ satisfying $\bar{\rho}_{E^{\prime}, p} \otimes \overline{\mathbb{F}}_p \sim\left(\begin{array}{cc}\theta & 0 \\ 0 & \theta^{\prime}\end{array}\right)$ such that $\theta^{12}=\lambda^{12}$. Since $p$ splits in $K$, we see that the image $\bar{\rho}_{E^{\prime}, p}\left(G_K\right)$ is contained inside a split Cartan subgroup, which implies that the character $\theta$ is in fact $\mathbb{F}_p$-valued. In particular, the order of $\theta$ is divisible by $p-1$. Theorem \ref{thm: larson} also implies that $\lambda \theta^{-1}$ is unramified away from the additive primes of $E$. Therefore, $\lambda \theta^{-1}$ is unramified at $\mathfrak{p} \mid p$. Since $\lambda^{p+1}=\chi_p$, we get $\left.\theta^{p+1}\right|_{I_{\mathfrak{p}}}=\left.\chi_p\right|_{I_{\mathfrak{p}}}$. Note that $p \equiv 3\pmod 4$, hence $\frac{p-1}{2}$ is odd. We deduce that $\left.\chi_p^{(p-1) / 2}\right|_{I_{\mathfrak{p}}}=\left.\left(\theta^{p-1}\right)^{(p+1) / 2}\right|_{I_{\mathfrak{p}}}=1$. However, the order of $\left.\chi_p^{(p-1) / 2}\right|_{I_{\mathfrak{p}}}$ is $p-1$ since it surjects on $\mathbb{F}_p^\times$, contradicting the fact that $\left.\chi_p^{(p-1) / 2}\right|_{I_{\mathfrak{p}}}=1$. We proved that when $p$ splits in $K$ and $p \equiv 3\pmod 4$, if the representation is $\bar{\rho}_{E, p}$ is irreducible, then it must be absolutely irreducible.
\end{proof}

\begin{theorem}[{\cite[Theorem 1]{larson2014determinants}}]\label{thm: larson}
Let $K$ be a number field. There exists a finite set of primes $M_K$, depending only on $K$, such that, for any prime $p \notin M_K$ and any elliptic curve $E / K$ for which $\bar{\rho}_{E, p} \otimes \overline{\mathbb{F}}_p$ is conjugate to $\left(\begin{array}{cc}\lambda & * \\ 0 & \lambda^{\prime}\end{array}\right)$ where $\lambda, \lambda^{\prime}: G \rightarrow \mathbb{F}_p^{\times}$are characters, one of the following happens.
\begin{enumerate}
    \item There exists an elliptic curve $E^{\prime} / K$ with $C M$, whose CM field is contained in $K$, with $\bar{\rho}_{E^{\prime}, p} \otimes \overline{\mathbb{F}}_p$ is conjugate to $\left(\begin{array}{cc}\theta & * \\ 0 & \theta^{\prime}\end{array}\right)$ and such that $\theta^{12}=\lambda^{12}$.
    \item The Generalized Riemann Hypothesis (GRH) fails for $K=\mathbb{Q}(\sqrt{-p})$ and $\theta^{12}=\chi_p^6$. Moreover, in this case, $\bar{\rho}_{E^{\prime}, p}$ is already reducible over $\mathbb{F}_p$ and $p \equiv 3\pmod 4$.
\end{enumerate}
\end{theorem}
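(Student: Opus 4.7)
The plan is to analyze the character $\lambda$ via the local constraints imposed by the hypothesis that it is an isogeny character of an elliptic curve over $K$, and then to use class field theory to upgrade $\lambda^{12}$ to (a reduction of) an algebraic Hecke character. First I would study $\lambda$ locally at each prime: at primes $\mathfrak{q}$ of good reduction, $\lambda$ is unramified and $\lambda(\mathrm{Frob}_\mathfrak{q})$ is a root of the mod-$p$ reduction of the Frobenius characteristic polynomial, which has integer coefficients and roots of absolute value $\sqrt{N\mathfrak{q}}$; at primes of multiplicative reduction the ramification is controlled by the Tate curve; and at primes $\mathfrak{p}\mid p$ one applies Serre's classification of $\bar{\rho}_{E,p}|_{I_\mathfrak{p}}$ to show that $\lambda|_{I_\mathfrak{p}}$ is, up to a character of order dividing $12$ depending only on $K$, a power of a fundamental character of level $1$ or $2$ with exponent in a small explicit set. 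Passing to $\lambda^{12}$ absorbs the ambiguity in the labeling of fundamental characters and the tame twisting.

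The next step is class field theory. The previous local analysis shows that $\lambda^{12}$ factors through a ray class group of $K$ of modulus bounded in terms of $K$ alone, and that its values at Frobenius at good-reduction primes satisfy a fixed Weil-type constraint. This reduces the question to matching $\lambda^{12}$ with one of finitely many mod $p$ reductions of algebraic Hecke characters $\psi$ of $K$ of appropriate infinity type. By the theorem of Shimura--Taniyama, each such $\psi$ is the Hecke character of an elliptic curve $E'/K$ with CM by an order in an imaginary quadratic subfield of $K$, so extracting a match yields both the curve $E'$ and the identity $\theta^{12}=\lambda^{12}$ of the first alternative.

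The main obstacle, and the source of the GRH alternative, is uniformity in $p$. To rule out spurious candidate Hecke characters one has to produce, via an effective Chebotarev density theorem, a prime $\mathfrak{q}$ at which $\lambda^{12}(\mathrm{Frob}_\mathfrak{q})$ separates the finite list of candidates, and this needs a zero-free region for the Dedekind zeta function of an auxiliary ray class field of $K$. Outside a finite set $M_K$ depending only on $K$, the standard unconditional bounds suffice \emph{except} when a Siegel zero is present; that exceptional case forces the relevant field to be $\mathbb{Q}(\sqrt{-p})$, hence $p\equiv 3\pmod 4$, and a careful bookkeeping of how the zero obstructs the character comparison pins down precisely the relation $\theta^{12}=\chi_p^6$ with $\bar{\rho}_{E',p}$ already reducible over $\mathbb{F}_p$. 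Isolating this Siegel-zero contribution, rather than the local analysis or the Hecke-character matching, is the delicate step of the proof.
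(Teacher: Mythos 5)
This statement is not proved in the paper at all: it is quoted verbatim as Theorem~1 of Larson--Vaintrob \cite{larson2014determinants} and used as a black box in the absolute-irreducibility argument, so there is no internal proof to compare your attempt against. Judged as a reconstruction of the cited proof, your outline does track the actual strategy: local analysis of the isogeny character (Weil bounds at good primes, Tate curves at multiplicative primes, Raynaud/Serre fundamental characters at $\mathfrak{p}\mid p$), passage to $\lambda^{12}$ to absorb the tame and labeling ambiguities, class field theory to realize $\lambda^{12}$ as the reduction of one of finitely many algebraic Hecke characters of bounded conductor, and Shimura--Taniyama to convert the matching Hecke character into a CM elliptic curve $E'$ with $\theta^{12}=\lambda^{12}$.

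The soft spot is your account of where GRH enters. It is not needed to separate a finite list of candidate Hecke characters via effective Chebotarev over an auxiliary ray class field of $K$ (whose zeta function depends only on $K$ and so could be absorbed into $M_K$). Rather, after the Hecke-character matching there remains exactly one residual possibility, the ``half-cyclotomic'' type $\lambda^{12}=\chi_p^6$ (times a bounded unramified character), which does not correspond to any CM structure inside $K$; excluding it requires exhibiting a prime of $K$ of small norm with prescribed splitting behaviour in $\QQ(\sqrt{-p})$ --- a field that varies with $p$ and hence cannot be swept into a finite set depending only on $K$. GRH for $\QQ(\sqrt{-p})$ bounds the least such prime by a power of $\log p$; its failure is precisely the second alternative, and the congruence $p\equiv 3\pmod 4$ and the reducibility over $\mathbb{F}_p$ fall out of the quadratic-character bookkeeping in that residual case. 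Your sketch gestures at this but conflates the auxiliary field with a ray class field of $K$ and does not explain why the exceptional type survives the matching step, so as written the delicate third of the argument is still missing.
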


\subsection{Level-lowering: Eliminating newforms}\label{section:level-lowering}

In order to prove Theorem \ref{mainthm1}, one needs to apply the level-lowering result Theorem $\ref{levellow}$. Observe that every condition of Theorem $\ref{levellow}$ is now satisfied. Indeed, by Theorem \ref{box}, we know that $E/K$ is modular. The conditions $(i)$ and $(ii)$ of Theorem \ref{levellow} are satisfied by Lemma \ref{Esem}. The condition $(iii)$ is automatically satisfied since we are dealing with quadratic fields $K$ and $p>5$ in every case. Finally, the representation $\bar{\rho}_{E, p}$ is irreducible for the given bounds on $p$ as we explained in Section \ref{section:irreducibility}. 

By Theorem \ref{levellow}, we know that there is a Hilbert newform $\mathfrak{f}$ of parallel weight $2$ and level $\mathcal{N}$ as indicated in Table \ref{loweredleveltable}, a prime $\mathfrak{p}$ above $p$ in the Hecke eigenvalue field $\QQ_{\mathfrak{f}}$ of $\mathfrak{f}$ such that $\bar{\rho}_{E, p} \sim \bar{\rho}_{\mathfrak{f}, \mathfrak{p}}$.

\begin{table}
$$\begin{array}{|c|c|c|c|}
\hline d & \text{Level  $\mathcal{N}$}  &\text{Condition on $i$} &\text{Ideal $\mathfrak{D}$}\\
\hline 3 & \mfp^i\mathfrak{D}  & i\in\{1,4\}  & \mathfrak{D}\mid 3\\
\hline 5 & \mfp^i\mathfrak{D} & i\in\{1,4\} & \mathfrak{D}\mid5 \\
\hline 7 & \mfp^i\mathfrak{D}  & i\in\{1,4\} & \mathfrak{D}\mid7\\
\hline 11 & \mfp^i\mathfrak{D}  & i\in\{1,4\}  & \mathfrak{D}\mid11\\
\hline 13 & \mfp^i\mathfrak{D} & i=1  & \mathfrak{D}\mid13\\
\hline 19 & \mfp^i\mathfrak{D} & i\in\{1,4\} & \mathfrak{D}\mid19 \\
\hline 23 & \mfp^i\mathfrak{D} & i\in\{1,4\}& \mathfrak{D}\mid23  \\
\hline
\end{array}$$
\caption{Lowered level $\mathcal{N}$}
\label{loweredleveltable}
\end{table}

\begin{remark}
    Similar to our discussion in the beginning of Chapter \ref{galoisrepr} for imaginary quadratic fields, the ideal $\mathcal{N}$ consists of only powers of the prime $\mfp$ if $p\mid r$, and it is divisible by $\mfp$ and $\mathfrak{D}$ otherwise. Since the level of the former case was already treated in \cite{freitas2015fermat} and \cite{michaud2021fermat}, we assume $\gcd(p,r)>1$.
\end{remark}

We will now discuss how to eliminate the arising Hilbert newforms.

\subsubsection{Existing techniques}\label{techniques}

    We start by stating a theorem allowing us to bound the exponent $p$. This is \cite[Lemma 7.1]{freitas2015fermat}:
    \begin{lemma}\label{C_f}
      Let $K$ be a totally real field, and let $p \geq 5$ be a prime. Let $E$ be an elliptic curve over $K$ of conductor $\mathcal{N}$, and let $\mathfrak{f}$ be a newform of parallel weight 2 and level $\mathcal{N}_p$. Let $t$ be a positive integer satisfying $t \mid \# E(K)_{\text {tors }}$. Let $\mathfrak{q} \nmid t \mathcal{N}_p$ be a prime ideal of $\mathcal{O}_K$ and let

$$
\mathcal{A}_{\mathfrak{q}}=\{a \in \mathbb{Z}: \quad|a| \leq 2 \sqrt{\operatorname{Norm}(\mathfrak{q})}, \quad \operatorname{Norm}(\mathfrak{q})+1-a \equiv 0 \pmod{t}\}
$$

If $\bar{\rho}_{E, p} \sim \bar{\rho}_{\mathrm{f}, \varpi}$ then $\varpi$ divides the principal ideal

$$
B_{\mathfrak{f}, \mathfrak{q}}=\operatorname{Norm}(\mathfrak{q})\left((\operatorname{Norm}(\mathfrak{q})+1)^2-a_{\mathfrak{q}}(\mathfrak{f})^2\right) \prod_{a \in \mathcal{A}_{\mathfrak{q}}}\left(a-a_{\mathfrak{q}}(\mathfrak{f})\right) \cdot \mathcal{O}_{\mathbb{Q}_{\mathfrak{f}}}
$$

    \end{lemma}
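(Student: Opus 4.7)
The plan is to compare the Frobenius traces of $\bar{\rho}_{E,p}$ and $\bar{\rho}_{\mathfrak{f},\varpi}$ at $\mathfrak{q}$ using the isomorphism $\bar{\rho}_{E,p}\sim\bar{\rho}_{\mathfrak{f},\varpi}$, splitting into cases according to the reduction type of $E$ at $\mathfrak{q}$. The three factors of $B_{\mathfrak{f},\mathfrak{q}}$---namely $\Norm(\mathfrak{q})$, $(\Norm(\mathfrak{q})+1)^2-a_\mathfrak{q}(\mathfrak{f})^2$, and $\prod_{a\in\mathcal{A}_\mathfrak{q}}(a-a_\mathfrak{q}(\mathfrak{f}))$---will correspond precisely to the three situations that can arise. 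First I would dispose of the case $\mathfrak{q}\mid p$ via the first factor: since $\varpi\mid p$ and $p\mid\Norm(\mathfrak{q})$ whenever $\mathfrak{q}$ lies over $p$, the divisibility is automatic. Henceforth I assume $\mathfrak{q}\nmid p$, so that Frobenius at $\mathfrak{q}$ acts on both representations whenever they are unramified there.

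If $\mathfrak{q}\nmid\mathcal{N}$, then $E$ has good reduction at $\mathfrak{q}$, so $\bar{\rho}_{E,p}$ is unramified at $\mathfrak{q}$ with Frobenius trace reducing to $a_\mathfrak{q}(E)\pmod{p}$. The isomorphism then gives $a_\mathfrak{q}(E)\equiv a_\mathfrak{q}(\mathfrak{f})\pmod{\varpi}$. It remains to verify $a_\mathfrak{q}(E)\in\mathcal{A}_\mathfrak{q}$: the bound $|a_\mathfrak{q}(E)|\leq 2\sqrt{\Norm(\mathfrak{q})}$ is Hasse--Weil, and since $\mathfrak{q}\nmid t$ the reduction map $E(K)_{\mathrm{tors}}\hookrightarrow E(\mathbb{F}_\mathfrak{q})$ is injective on $t$-torsion, so $t\mid\#E(\mathbb{F}_\mathfrak{q})=\Norm(\mathfrak{q})+1-a_\mathfrak{q}(E)$. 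Consequently $\varpi$ divides the particular factor $a_\mathfrak{q}(E)-a_\mathfrak{q}(\mathfrak{f})$ of the third product.

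The remaining case is $\mathfrak{q}\mid\mathcal{N}$ with $\mathfrak{q}\nmid\mathcal{N}_p$. By the definition of $\mathcal{M}_p$ in Theorem \ref{levellow}, this forces $E$ to have multiplicative reduction at $\mathfrak{q}$ with $p\mid v_\mathfrak{q}(\Delta_\mathfrak{q})$. The Tate curve description of $\bar{\rho}_{E,p}|_{D_\mathfrak{q}}$, combined with the divisibility $p\mid v_\mathfrak{q}(\Delta_\mathfrak{q})$, implies that the inertia action on $E[p]$ becomes trivial, making the representation unramified at $\mathfrak{q}$ with Frobenius eigenvalues $\pm 1$ and $\pm\Norm(\mathfrak{q})$ (the sign depending on split versus non-split reduction). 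The resulting trace is $\pm(\Norm(\mathfrak{q})+1)$, so $a_\mathfrak{q}(\mathfrak{f})\equiv\pm(\Norm(\mathfrak{q})+1)\pmod{\varpi}$, which is exactly the divisibility by the second factor.

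The only ingredient beyond bookkeeping is the multiplicative-reduction input in the final step---the fact that $p\mid v_\mathfrak{q}(\Delta_\mathfrak{q})$ trivializes the inertia action on $E[p]$ for a Tate curve. This is precisely the ingredient underlying Theorem \ref{levellow} itself, so in our setting it is available for free, and the lemma follows by assembling the three cases into the single divisibility $\varpi\mid B_{\mathfrak{f},\mathfrak{q}}$.
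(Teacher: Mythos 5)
Your proposal is correct, and it is essentially the argument the paper itself uses: Lemma \ref{C_f} is quoted from Freitas--Siksek without proof, but the paper's proof of the Bianchi analogue, Lemma \ref{boundingp}, follows exactly your three-way split ($\mathfrak{q}\mid p$ handled by the factor $\Norm(\mathfrak{q})$; good reduction handled by the product over $\mathcal{A}_{\mathfrak{q}}$ via Hasse--Weil and injectivity of reduction on prime-to-$\mathfrak{q}$ torsion; multiplicative reduction handled by the factor $(\Norm(\mathfrak{q})+1)^2-a_{\mathfrak{q}}(\mathfrak{f})^2$). Your additional observation that $\mathfrak{q}\mid\mathcal{N}$, $\mathfrak{q}\nmid\mathcal{N}_p$ forces multiplicative reduction with $p\mid v_{\mathfrak{q}}(\Delta_{\mathfrak{q}})$, hence an unramified mod-$p$ representation at $\mathfrak{q}$, is exactly the right justification for that case.
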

First, let us explain why the above lemma is useful: Set $t=4$ and $\mathcal{N}_p=\mathcal{N}$ as in Table \ref{loweredleveltable}. Let $B_{\mathfrak{f}}:=\sum_{\mathfrak{q} \in T} B_{\mathfrak{f}, \mathfrak{q}}$, where $T$ is the set of primes $\mathfrak{q}\ne \mfp,\mathfrak{D}$ of $K$ having norm less than $50$. Let $C_{\mathfrak{f}}=\Norm_{\QQ_\mathfrak{f}/\QQ}(B_{\mathfrak{f}})$. If $\bar{\rho}_{E, p} \sim \bar{\rho}_{\mathfrak{f}, \mathfrak{p}}$, previous lemma implies $\mathfrak{p}\mid B_{\mathfrak{f}}$ so that $p\mid C_{\mathfrak{f}}$. Therefore, if $p\nmid C_{\mathfrak{f}}$, then the relation $\bar{\rho}_{E, p} \sim \bar{\rho}_{\mathfrak{f}, \mathfrak{p}}$ cannot hold. 

Notice that it is not possible to eliminate a Hilbert newform $\mathfrak{f}$ using Lemma \ref{C_f} if it satisfies $C_\mathfrak{f}=0$. Therefore, we need a different approach when this is the case. The following lemma proposes another technique to eliminate such forms. We remark that the statement of the lemma below is independent of the fact that the base field $K$ is totally real or not, i.e., we are allowed to apply it when proving Theorem \ref{mainthm3}.

\begin{lemma}[Inertia Argument]\label{lemma:imageofinertia}
Let $K$ be a number field and let $E,E'$ be two elliptic curves over $K$. Let $\mathfrak{q}$ be a prime of $\mcalO_K$ and let $p\ge5$ be a rational prime. Suppose that the elliptic curve $E$ has potentially multiplicative reduction at $\mathfrak{q}$ and $p\nmid v_{\mathfrak{q}}(j)$, where $j$ denotes the $j$-invariant of the elliptic curve $E$. Suppose also that $E'$ has potentially good reduction at $\mathfrak{q}$. Then, $\bar{\rho}_{E, p} \not\sim \bar{\rho}_{E', p}$.
\end{lemma}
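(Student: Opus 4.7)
The plan is to compare the orders of the inertia images $\bar{\rho}_{E,p}(I_{\mathfrak{q}})$ and $\bar{\rho}_{E',p}(I_{\mathfrak{q}})$ and observe that they have incompatible divisibility by $p$. Throughout I work under the implicit assumption $\mathfrak{q}\nmid p$, which is in any case forced by the appeal to Lemma~\ref{divinertia}.

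First I would suppose, for contradiction, that $\bar{\rho}_{E,p}\sim\bar{\rho}_{E',p}$. Restricting the isomorphism to the inertia subgroup $I_{\mathfrak{q}}\subseteq G_K$ shows that $\bar{\rho}_{E,p}(I_{\mathfrak{q}})$ and $\bar{\rho}_{E',p}(I_{\mathfrak{q}})$ are conjugate in $\mathrm{GL}_2(\mathbb{F}_p)$, and in particular have equal cardinality. The whole strategy is to make the two sides of this equality disagree modulo $p$.

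On the $E$ side, the hypotheses are exactly those of Lemma~\ref{divinertia}: $E$ has potentially multiplicative reduction at $\mathfrak{q}$ and $p\nmid v_{\mathfrak{q}}(j_E)$, so we conclude $p\mid\#\bar{\rho}_{E,p}(I_{\mathfrak{q}})$, i.e.\ the image contains an element of order $p$. On the $E'$ side, since $E'$ has potentially good reduction at $\mathfrak{q}$, the N\'eron--Ogg--Shafarevich criterion furnishes a finite extension $L/K_{\mathfrak{q}}$ over which $E'$ acquires good reduction, and then $I_L$ acts trivially on $E'[p]$ (this step uses $\mathfrak{q}\nmid p$). Hence $\bar{\rho}_{E',p}|_{I_{\mathfrak{q}}}$ factors through the finite quotient $I_{\mathfrak{q}}/(I_{\mathfrak{q}}\cap G_L)$, whose order is bounded by the ramification index of $L/K_{\mathfrak{q}}$. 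By the standard Serre--Tate classification of inertial types for elliptic curves with potentially good reduction at a prime of residue characteristic different from $p$, this order is coprime to $p$ as soon as $p\ge 5$, so $p\nmid\#\bar{\rho}_{E',p}(I_{\mathfrak{q}})$.

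Combining the two divisibility assertions contradicts the equality $\#\bar{\rho}_{E,p}(I_{\mathfrak{q}})=\#\bar{\rho}_{E',p}(I_{\mathfrak{q}})$, completing the argument. The only delicate point is the claim that the inertia image on the potentially-good side has order prime to $p$; this is a classical consequence of N\'eron--Ogg--Shafarevich together with the wild/tame structure of inertia at a prime of residue characteristic $\neq p$, but it is really the heart of the proof and the place where the (implicit) assumption $\mathfrak{q}\nmid p$ is used.
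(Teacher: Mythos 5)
Your proposal is correct and follows essentially the same route as the paper: the paper likewise combines Lemma~\ref{divinertia} (giving $p\mid\#\bar{\rho}_{E,p}(I_{\mathfrak{q}})$) with the classification of inertia images for potentially good reduction — cited there from Kraus, which gives $\#\bar{\rho}_{E',p}(I_{\mathfrak{q}})\mid 24$ — to contradict $p\ge 5$. Your added remarks on the implicit hypothesis $\mathfrak{q}\nmid p$ and on N\'eron--Ogg--Shafarevich are accurate fillings-in of details the paper leaves tacit.
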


\begin{proof}
    The possibilities of the group $\bar{\rho}_{E', p}\left(I_{\mathfrak{q}}\right)$ has been determined by Kraus in \cite{Kraus1990}. In particular, $\# \bar{\rho}_{E, p}\left(I_{\mathfrak{q}}\right)\mid 24$. On the other hand, since $E/K$ has potentially multiplicative reduction at $\mathfrak{q}$ and $p\nmid v_{\mathfrak{q}}(j_E)$, Lemma \ref{divinertia} implies $p\mid \# \bar{\rho}_{E, p}\left(I_{\mathfrak{q}}\right)$. Therefore, $p\mid 24$. Since we assumed $p\ge 5$, this is not possible.
\end{proof}

Let us discuss how Lemma \ref{lemma:imageofinertia} is useful in our setting. Recall that the Frey curve $E$ has potentially multiplicative reduction at $\mfp$ by Lemma \ref{potmultb} and $p\nmid 8v_{\mfp}(2)-2pv_{\mfp}(b)=v_\mfp(j_E)$. Now, let $\mathfrak{f}$ be a Hilbert newform or a Bianchi modular form, depending on whether we study the $d$-Fermat equation over a totally real field or a totally complex field. Suppose that $\QQ_\mathfrak{f}=\QQ$ and $C_\mathfrak{f}=0$. Let $E'$ be an elliptic curve associated to $\mathfrak{f}$. If $E'$ has potentially good reduction at $\mfp$, i.e $v_\mfp(j_E)\ge 0$, then Lemma \ref{lemma:imageofinertia} discards the isomorphism $\bar{\rho}_{E, p} \sim \bar{\rho}_{E', p}$. Therefore, for a Hilbert newform $\mathfrak{f}$ satisfying $C_\mathfrak{f}=0$, it suffices to find an elliptic curve corresponding to $\mathfrak{f}$ having potentially good reduction at $\mfp$. Note that in the case of totally complex fields, this is not enough since it is conjecturally possible that a Bianchi modular form $\mathfrak{f}$ satisfying $\QQ_\mathfrak{f}=\QQ$ and $C_\mathfrak{f}=0$ may also correspond to a fake elliptic curve. Therefore, we need to discard this additional possibility when dealing with Bianchi modular forms. We will see an example of this phenomenon when $K=\QQ(\sqrt{-3})$. In fact, this is the only reason we had to assume Conjecture \ref{fakellcurve} in this case.

We also remark that it is not wise to try applying the machinery of Lemma \ref{lemma:imageofinertia} by choosing an odd prime in our setting because the condition $p\nmid v_{\mathfrak{q}}(j_E)$ of Lemma \ref{lemma:imageofinertia} does not hold when $\mathfrak{q}$ is an odd prime in $K$.

\subsection{Proof of Theorem \ref{mainthm1}} 

In this section, we prove Theorem \ref{mainthm1}. Recall that the proof of our main result over real quadratic fields is built upon Theorem \ref{levellow}, namely the Level-Lowering Theorem, whose conditions are satisfied for each real quadratic field $\QQ(\sqrt{d})$ as we explained at the beginning of Section \ref{section:level-lowering}. Theorem \ref{levellow} therefore guarantees the existence of a Hilbert newform $\mathfrak{f}$ of parallel weight $2$ and level $\mathcal{N}$ as indicated in Table \ref{loweredleveltable}, a prime $\mathfrak{p}$ above $p$ in the Hecke eigenvalue field $\QQ_{\mathfrak{f}}$ of $\mathfrak{f}$ such that $\bar{\rho}_{E, p} \sim \bar{\rho}_{\mathfrak{f}, \mathfrak{p}}$. In order to finish proving Theorem \ref{mainthm1}, we now show that the Hilbert newforms arising from the level-lowering theorem can be eliminated, i.e the isomorphism $\bar{\rho}_{E, p} \sim \bar{\rho}_{\mathfrak{f}, \mathfrak{p}}$ can be discarded for each possible case by using the techniques that we had introduced in the previous section.

\begin{itemize}
    \item When $K=\QQ(\sqrt{3})$, we see by using Magma that there are no Hilbert newforms at the levels $\mfp\mathfrak{D}$ and $\mfp^4\mathfrak{D}$. Therefore, Theorem \ref{mainthm1} follows immediately when $d=3$.
    
    \item When $K=\QQ(\sqrt{5})$, there are no Hilbert newforms at the level $\mfp\mathfrak{D}$. There are $12$ Hilbert newforms at the level $\mfp^4\mathfrak{D}$. Every newform $\mathfrak{f}$ at this level satisfy $C_\mathfrak{f}=0$. Therefore, we cannot eliminate those forms directly using Lemma \ref{C_f}. However, we can still try if Lemma \ref{lemma:imageofinertia} is applicable. Using Magma, all forms satisfy $\QQ_{\mathfrak{f}}=\QQ$. The elliptic curves in the isogeny classes given in the LMFDB label $2.2.5.1-1280.1-a,1280.1-b,1280.1-c,1280.1-d,1280.1-e,1280.1-f,1280.1-g,1280.1-h,1280.1-i,1280.1-j,1280.1-k,1280.1-l$ correspond to those newforms. These elliptic curves satisfy $v_{\mfp}(j)\ge0$ i.e, have potentially good reduction at $\mfp$, whereas our Frey curve $E$ has potentially multiplicative reduction at $\mfp$. By Lemma \ref{lemma:imageofinertia}, we discard all isomorphisms $\bar{\rho}_{E, p} \sim \bar{\rho}_{\mathfrak{f}, \mathfrak{p}}$, i.e we eliminated all Hilbert newforms at the level $\mfp^4\mathfrak{D}$. Theorem \ref{mainthm1} follows when $d=5$.
    
    \item When $K=\QQ(\sqrt{7})$, there are $2$ Hilbert newforms $\mathfrak{f}$ at the level $\mfp\mathfrak{D}$, for which $C_\mathfrak{f}$ is divisible by $2$ and $3$. There are $6$ Hilbert newforms at the level $\mfp^4\mathfrak{D}$. For each form $\mathfrak{f}$ at this level, $C_\mathfrak{f}$ is divisible by $2$ or $3$.

    \item When $K=\QQ(\sqrt{11})$, there are $2$ Hilbert newforms at the level $\mfp\mathfrak{D}$. $C_\mathfrak{f}$ is divisible by $3,5$ and $7$ for both of the forms. There are $12$ Hilbert newforms at the level $\mfp^4\mathfrak{D}$, for which $C_\mathfrak{f}$ is divisible by $3$ or $5$ or the ideal $B_\mathfrak{f}$ is the whole $\mcalO_K$.

    \item When $K=\QQ(\sqrt{13})$, there are $2$ Hilbert newforms at the level $\mfp\mathfrak{D}$. $C_\mathfrak{f}$ is divisible by $3,5$ or $7$ for both of the forms. 

    \item When $K=\QQ(\sqrt{19})$, there are $10$ Hilbert newforms at the level $\mfp\mathfrak{D}$. $C_\mathfrak{f}$ is divisible by $3,5$ or $19$ for these forms. There are $18$ Hilbert newforms at the level $\mfp^4\mathfrak{D}$, for which $C_\mathfrak{f}$ is divisible by $3$ or $5$ or we have $B_\mathfrak{f}=\mcalO_K$. 

    \item When $K=\QQ(\sqrt{23})$, there are $8$ Hilbert newforms at the level $\mfp\mathfrak{D}$. $C_\mathfrak{f}$ is divisible by $2,3,5$ or $11$ for these forms. There are $18$ Hilbert newforms at the level $\mfp^4\mathfrak{D}$, for which $C_\mathfrak{f}$ is divisible by $2,3$ or $5$ or the ideal $B_\mathfrak{f}$ is the whole $\mcalO_K$. 
\end{itemize}

\begin{remark}
    We finalize discussing Theorem \ref{mainthm1} with the following remark on the $d$--Fermat when $d=11,13,23$. In these cases, recall that we had to assume $p\ne d$ and it is precisely due to the item (\ref{item:ii-levellowering}) of the level-lowering theorem. Indeed, as we discussed, $E/\QQ(\sqrt{d})$ is semistable away from $\mfp$ by Lemma \ref{Esem}. From Lemma \ref{lemma:irred,d=23}, observe that $\bar{\rho}_{E, d}$ is irreducible. The condition on the ramification index is automatically satisfied since $K$ is a quadratic field. However, since we assumed $p\nmid r$, it follows from the proof of Lemma \ref{pdivdel} that the condition $p\mid v_{\mathfrak{D}}(\Delta_E)$ fails when $\mathfrak{D}\mid d$. Therefore, the level-lowering theorem cannot be applied when $p=d$, and we had to exclude this case.
\end{remark}

\section{Imaginary quadratic case}\label{section:imaginaryquadratic}

\subsection{Application of Serre's modularity conjecture}

Let us ensure that the Galois representation $\bar{\rho}_{E, p}$ satisfies the conditions of Conjecture \ref{serrconj}: First, note that our mod $p$ representation $\bar{\rho}_{E, p}: G_K\to \operatorname{GL}_2(\mathbb{F}_p)\xhookrightarrow{}\operatorname{GL}_2(\overline{\mathbb{F}}_p)$ is irreducible and surjective, hence is absolutely irreducible. It is also continuous with respect to the Krull topology on $G_K$ and the discrete topology on $\operatorname{GL}_2(\mathbb{F}_p)$. Moreover, our bounds on $p$ given in the statement of Theorem \ref{mainthm3} guarantee that the prime $p$ is unramified in 
$K=\QQ(\sqrt{-3}), \QQ(\sqrt{-11}), \QQ(\sqrt{-19}), \QQ(\sqrt{-43})$. By the discussion in Chapter \ref{section:irreducibility}, we know that the determinant of $\bar{\rho}_{E, p}$ is the mod $p$ cyclotomic character and $\bar{\rho}_{E, p}$ is finite flat at every prime $\mathfrak{p}$ of $K$ that lies above $p$. Therefore, Serre's conjecture is applicable in our setting, and we proceed as in \cite{cturcacs2018fermat}: 

Let $K$ be a number field with the ring of integers $\mcalO_K$ and let $\mathcal{N}$ be an ideal of $K$. We will start with some definitions. Note that for the precise definition of the locally symmetric space $Y_0(\mathcal{N})$, the reader may refer to \cite[Section 2]{csengun2018asymptotic}.

We note, as explained in \cite{cturcacs2018fermat}, that explicit computations in the cohomology groups of locally symmetric spaces are considerably simpler when the field has class number one. This is one of the main reasons for restricting our attention to imaginary quadratic fields of class number one.

\begin{definition}[{\cite[Section 2.1]{kara2020asymptotic}}] A weight $2$ complex eigenform $f$ over $K$ of degree $i$ and level $\mathcal{N}$ is a ring homomorphism $f: \mathbb{T}_{\mathbb{C}}^{(i)}(\mathcal{N})\to \mathbb{C}$, where $\mathbb{T}_{\mathbb{C}}^{(i)}(\mathcal{N})$ is the commutative $\ZZ$-algebra inside the endomorphism algebra of $\operatorname{H}^i(Y_0(\mathcal{N}), \mathbb{C})$ generated by the Hecke operators $T_{\mathfrak{q}}$ for every prime $\mathfrak{q}\in Spec(\mcalO_K)$ not dividing $\mathcal{N}$.
\end{definition}

\begin{definition}[{\cite[Section 2.1]{kara2020asymptotic}}]
Let $p$ be a rational unramified prime in $K$ that is coprime to $\mathcal{N}$. A weight $2$ mod $p$ eigenform $\theta$ over $K$ of degree $i$ and level $\mathcal{N}$ is a ring homomorphism $\theta: \mathbb{T}_{\overline{\mathbb{F}}_p}^{(i)}(\mathcal{N})\to \overline{\mathbb{F}}_p$. Here, we only consider the Hecke operators $T_{\mathfrak{q}}$ where $\mathfrak{q}$ is a prime that is coprime to $p\mathcal{N}$.
\end{definition}

\begin{definition}[{\cite[Section 2.1]{kara2020asymptotic}}]
For a weight $2$ complex eigenform $f$ over $K$ of degree $i$ and level $\mathcal{N}$, denote the number field generated by the values of $f$ by $\mcalO_f$. Let $\theta$ be a weight $2$ mod $p$ eigenform over $K$ of degree $i$ and level $\mathcal{N}$. We say that $\theta$ lifts to a complex eigenform if there is a weight $2$ complex eigenform $f$ over $K$ of degree $i$ and level $\mathcal{N}$, and a prime ideal $\mathfrak{p}$ of $\mcalO_f$ lying above $p$ such that $$\theta(T_{\mathfrak{q}})\equiv f(T_{\mathfrak{q}}) \quad(\bmod\text{ } \mathfrak{p})$$for every prime $\mathfrak{q}$ of $K$ that is coprime to $p\mathcal{N}$. 
\end{definition}

\subsection{Abelianization}
Next, we proceed as in \cite[Section 5]{isik2023ternary}. Let $p$ be a rational prime such that $p$ is unramified in $K$ and relatively prime to $\mathcal{N}$. Consider the following short exact sequence, which is given by the multiplication-by-$p$ map:
$$
0 \longrightarrow \mathbb{Z} \xrightarrow{\times p} \mathbb{Z} \longrightarrow \mathbb{F}_p \longrightarrow 0 \text {. }
$$Set $Y=Y_0(\mathcal{N})$. From the long exact sequence of cohomology groups associated to the above short exact sequence, we have the following diagram:

\begin{footnotesize}
$$
\begin{tikzcd}[row sep=0.5em,column sep=0.5em]
&& \ldots \arrow[rr]& & \operatorname{H}^1(Y,\ZZ) \arrow[rr]  & & \operatorname{H}^1(Y,\ZZ) \arrow[rr, "\times p"]  & & \operatorname{H}^1(Y,\mathbb{F}_p) \arrow[rr, "\delta"] \arrow[red, dr]
 & & \operatorname{H}^2(Y,\ZZ) \arrow[, rr]  & & \operatorname{H}^2(Y,\ZZ) \arrow[rr]  & & \ldots  \\
&& &  & & & & \coker(\times p)=\Ker(\delta) \arrow[red, ur] & &  \im(\delta)=\Ker(\times p) \arrow[red, dr]  & & & &  \\
&&  & & & & 0 \arrow[red, ur] &&   && 0 & & & & & & &&&
\end{tikzcd}
$$
\end{footnotesize}

This diagram gives rise to the following short exact sequence:

$$
0 \longrightarrow \Ker(\delta) \longrightarrow \operatorname{H}^1(Y,\mathbb{F}_p) \longrightarrow \im(\delta) \longrightarrow 0 \text {. }
$$

Let us compute $\Ker(\delta)$ and $\im(\delta)$:
$$\Ker(\delta)=\coker(\times p)=\operatorname{H}^1(Y,\ZZ)/p\operatorname{H}^1(Y,\ZZ)\cong \operatorname{H}^1(Y,\ZZ)\otimes\mathbb{F}_p$$and
$$\im(\delta)=\Ker(\times p)=\Ker\left(\operatorname{H}^2(Y,\ZZ)\to \operatorname{H}^2(Y,\ZZ)\right)=\operatorname{H}^2(Y,\ZZ)[p]$$Therefore, we derived that the following sequence is exact:

$$
0 \longrightarrow \operatorname{H}^1(Y,\ZZ)\otimes\mathbb{F}_p \longrightarrow \operatorname{H}^1(Y,\mathbb{F}_p) \longrightarrow \operatorname{H}^2(Y,\ZZ)[p] \longrightarrow 0 \text {. }
$$Clearly, $\operatorname{H}^2(Y,\ZZ)[p]=0$ if and only if the map $\operatorname{H}^1(Y,\ZZ)\otimes\mathbb{F}_p \longrightarrow \operatorname{H}^1(Y,\mathbb{F}_p)$ is surjective and this holds if and only if $\operatorname{H}^1(Y,\ZZ)\to \operatorname{H}^1(Y,\mathbb{F}_p)$ is surjective. If this is the case, then we see that any Hecke eigenvector $\overline{\alpha}$ in $\operatorname{H}^1(Y,\mathbb{F}_p)$ comes from such an eigenvector in $\operatorname{H}^1(Y,\ZZ)\otimes\mathbb{F}_p $.

On the other hand, considering the $p$-adic integers $\ZZ_p$ yields the following short exact sequence:$$
    0 \longrightarrow \mathbb{Z}_p \xrightarrow{\times p} \mathbb{Z}_p \longrightarrow \mathbb{F}_p \longrightarrow 0 \text {. }
$$This short exact sequence gives rise to an analogous short exact sequence on cohomology groups: 
$$
0 \longrightarrow \operatorname{H}^1(Y,\ZZ_p)\otimes\mathbb{F}_p \longrightarrow \operatorname{H}^1(Y,\mathbb{F}_p) \longrightarrow \operatorname{H}^2(Y,\ZZ_p)[p] \longrightarrow 0 \text {. }
$$As explained previously, if $\operatorname{H}^2(Y,\ZZ_p)[p]$ has only trivial $p$-torsion, then any Hecke eigenvector $\overline{\alpha}$ in $\operatorname{H}^1(Y,\mathbb{F}_p)$ comes from such an eigenvector in $\operatorname{H}^1(Y,\ZZ_p)\otimes\mathbb{F}_p $. Now, a lifting lemma of Ash and Stevens is applicable: By \cite[Proposition 1.2.2]{ash1986cohomology}, we know that there exists a finite integral extension $R^{\prime}$ of $\ZZ_p$, a prime $\mathfrak{p}$ above $p$ and a Hecke eigenvector $\alpha \in \operatorname{H}^1(Y, R^{\prime})$ such that the Hecke eigenvalues of $\alpha$ modulo $\mathfrak{p}$ are the ones of $\overline{\alpha}$. By fixing an embedding $\QQ\to \mathbb{C}$, we can see $\alpha\in \operatorname{H}^1(Y, \mathbb{C})$.

The existence of a (complex or mod $p$) eigenform is equivalent to the existence of a class in the corresponding cohomology group that is a simultaneous eigenvector for the Hecke operators such
that its eigenvalues match the values of the eigenform. From this point of view, we deduce that a mod $p$ eigenform lifts to a complex one whenever $\operatorname{H}^2(Y,\ZZ_p)$ has no non-trivial $p$-torsion.

Note that $\operatorname{H}^2(Y,\ZZ)\otimes \ZZ_p \cong \operatorname{H}^2(Y, \ZZ_p)$. This comes from the fact that  $\ZZ_p$ is flat over $\ZZ$, the universal coefficient theorem for cohomology and properties of the tensor product. Therefore, the cohomology groups $\operatorname{H}^2(Y,\ZZ)$ and $\operatorname{H}^2(Y,\ZZ_p)$ have the same $p$-torsion structure.

Recall from the beginning of Section \ref{section:irreducibility} that the Serre conductor $\mathcal{N}$ of the mod $p$ representation $\bar{\rho}_{E, p}$ is supported only on $S=\{\mathfrak{p}:\mathfrak{p}\mid2d\}$. For each imaginary quadratic number field $K$, the form of $\mathcal{N}$ can be found in Table \ref{conductortable}.

From Serre's modularity conjecture, we obtain a mod $p$ eigenform $\Psi: \mathbb{T}_{\mathbb{F}_p}\left(Y_0(\mathcal{N})\right) \rightarrow \overline{\mathbb{F}}_p$ over $K$ such that for every prime $\mathfrak{q}$ of $\mcalO_K$ that is coprime to $p\mathcal{N}$, $$
\operatorname{Tr}\left(\bar{\rho}\left(\operatorname{Frob}_{\mathfrak{q}}\right)\right)=\Psi\left(T_{\mathfrak{q}}\right) .
$$
Note that the image $\operatorname{Tr}\left(\bar{\rho}\left(\operatorname{Frob}_{\mathfrak{q}}\right)\right)$ is in $\mathbb{F}_p$, which implies that $\Psi$ corresponds to a class in $\operatorname{H}^1\left(Y_0(\mathcal{N}), \mathbb{F}_p\right)$ that is an eigenvector for all such Hecke operators $T_{\mathfrak{q}}$. 

Now, let us explain the relation between $\operatorname{H}^2(Y,\ZZ)[p]$ and $\Gamma_0(\mathcal{N})^{ab}$: Note that

\begin{eqnarray*}
    \operatorname{H}^2(Y,\ZZ)\otimes \mathbb{Z}\left[\frac{1}{6}\right] \cong \operatorname{H}^2\left(Y, \mathbb{Z}\left[\frac{1}{6}\right]\right) &\cong& \operatorname{H}^2\left(\Gamma_0(\mathcal{N}),\mathbb{Z}\left[\frac{1}{6}\right]\right)\\
    &\cong& H_1\left(\Gamma_0(\mathcal{N}),\mathbb{Z}\left[\frac{1}{6}\right]\right)
\end{eqnarray*}These isomorphisms show that if $\operatorname{H}^2(Y,\ZZ)$ has a $p$-torsion, then $\operatorname{H}^2\left(Y, \mathbb{Z}\left[\frac{1}{6}\right]\right)$ has a $p$-torsion. Therefore, $H_1\left(\Gamma_0(\mathcal{N}),\mathbb{Z}\left[\frac{1}{6}\right]\right)$ has a $p$-torsion. Since $$\Gamma_0(\mathcal{N})^{ab}[p]\otimes\mathbb{Z}\left[\frac{1}{6}\right]\cong H_1(\Gamma_0(\mathcal{N}),\ZZ)[p]\otimes\mathbb{Z}\left[\frac{1}{6}\right]\cong H_1\left(\Gamma_0(\mathcal{N}), \mathbb{Z}\left[\frac{1}{6}\right]\right)[p],$$ it follows that $\Gamma_0(\mathcal{N})$ has a non-trivial $p$-torsion.

After slightly modifying the code used by Turcas in \cite{cturcacs2018fermat}, \cite{cturcacs2020serre}, we computed the abelianization $\Gamma_0(\mathcal{N})^{a b}$ and its torsion for the levels $\mathcal{N}$ as in Table \ref{conductortable}. We provide them in Table \ref{primetortable} and the relevant codes can be found at 
\begin{center}
\url{https://warwick.ac.uk/fac/sci/maths/people/staff/turcas/fermatprog}.
\end{center}The scripts and their outputs are available in the GitHub repository \url{https://github.com/BGCakti/Solving-Fermat-type-equations-over-quadratic-fields}.

\subsection{Proof of Theorem \ref{mainthm2:d=-3} and Theorem \ref{mainthm3}}\label{pfthm3}

We are now ready to finalize the proof of Theorem \ref{mainthm2:d=-3} and Theorem \ref{mainthm3}. Recall that for $K=\QQ(\sqrt{-3})$, we assumed $p>13$ whereas for $K=\QQ(\sqrt{-11}),\QQ(\sqrt{-19}),\QQ(\sqrt{-43})$, we assumed $p>\max\{B_K,l_K,p_K\}$ where the constants are defined as in Theorem \ref{mainthm3}. By the fact that $\Gamma_0(\mathcal{N})^{a b}[p]=0$ for the indicated bounds on $p$, we see that the mod $p$ eigenforms lift to complex ones in each case, i.e. there is a complex eigenform $\mathfrak{f}$ over $K$ of level $\mathcal{N}$ as indicated in Table \ref{conductortable} such that for all prime ideals $\mathfrak{q}$ coprime to $p \mathcal{N}$ we have
\begin{equation}\label{complexeig}
\operatorname{Tr}\left(\bar{\rho}_{E, p}\left(\operatorname{Frob}_{\mathfrak{q}}\right)\right) \equiv \mathfrak{f}\left(T_{\mathfrak{q}}\right) \pmod{\mathfrak{p}},   
\end{equation}where $\mathfrak{p}$ is a prime ideal of $\mathbb{Q}_{\mathfrak{f}}$ lying above $p$ and $\mathbb{Q}_{\mathfrak{f}}$ is the number field generated by its eigenvalues. Throughout the rest of the paper, we denote the relation (\ref{complexeig}) by $\bar{\rho}_{E, p} \sim \bar{\rho}_{\mathfrak{f}, \mathfrak{p}}
$.

Recall that in Section \ref{techniques} we mentioned two methods to eliminate Hilbert newforms arising at a certain level by bounding $p$ above, namely Lemma \ref{C_f} and Lemma \ref{lemma:imageofinertia}. The latter holds for general number fields. However, the former holds for totally real fields so we have to make sure that an analogue of Lemma \ref{C_f} is true for number fields that have complex embeddings. The following lemma compensates for that. 

\begin{lemma}\label{boundingp}
Let $\mathfrak{q}\nmid p\mathcal{N}$ be a prime of $K$. Let $\mathfrak{f}$ be a Bianchi newform of level dividing $\mathcal{N}$. Define
$$
\mathcal{A}_{\mathfrak{q}}=\{a \in \mathbb{Z}:\text{ }|a| \leq 2 \sqrt{\operatorname{Norm}(\mathfrak{q})},\text{ } \operatorname{Norm}(\mathfrak{q})+1-a \equiv 0 \pmod 4\}.
$$If $\bar{\rho}_{E, p} \sim \bar{\rho}_{\mathfrak{f}, \mathfrak{p}}
$, then the prime $\mathfrak{p}$ divides 
$$
B_{\mathfrak{f}, \mathfrak{q}}:=\operatorname{Norm}(\mathfrak{q}) \cdot\left(\operatorname{Norm}(\mathfrak{q}+1)^2-a_{\mathfrak{q}}(\mathfrak{f})^2\right) \cdot \prod_{a \in \mathcal{A}_{\mathfrak{q}}}\left(a-a_{\mathfrak{q}}(\mathfrak{f})\right) \mathcal{O}_{\mathbb{Q}_{\mathfrak{f}}}
$$
\end{lemma}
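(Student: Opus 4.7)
The plan is to transcribe the Freitas--Siksek argument behind Lemma \ref{C_f} into the Bianchi setting. The starting input is that $\bar{\rho}_{E, p} \sim \bar{\rho}_{\mathfrak{f}, \mathfrak{p}}$ delivers, for every prime $\mathfrak{q} \nmid p\mathcal{N}$, a congruence of characteristic polynomials of Frobenius
$$
T^2 - \operatorname{Tr}(\bar{\rho}_{E, p}(\operatorname{Frob}_{\mathfrak{q}})) T + \operatorname{Norm}(\mathfrak{q}) \equiv T^2 - a_{\mathfrak{q}}(\mathfrak{f}) T + \operatorname{Norm}(\mathfrak{q}) \pmod{\mathfrak{p}},
$$
so in particular the traces agree modulo $\mathfrak{p}$. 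I would then split on the reduction type of $E$ at $\mathfrak{q}$.

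If $E$ has good reduction at $\mathfrak{q}$, then $\operatorname{Tr}(\bar{\rho}_{E, p}(\operatorname{Frob}_{\mathfrak{q}})) = a_{\mathfrak{q}}(E)$. The Hasse--Weil bound gives $|a_{\mathfrak{q}}(E)| \le 2\sqrt{\operatorname{Norm}(\mathfrak{q})}$, and because the Frey curve $E$ carries full $2$-torsion over $K$, one has $(\mathbb{Z}/2\mathbb{Z})^2 \hookrightarrow E(\mcalO_K/\mathfrak{q})$, whence $4 \mid \operatorname{Norm}(\mathfrak{q}) + 1 - a_{\mathfrak{q}}(E)$. Together these two conditions place $a_{\mathfrak{q}}(E)$ in the finite set $\mathcal{A}_{\mathfrak{q}}$, and the trace congruence then yields $\mathfrak{p} \mid a - a_{\mathfrak{q}}(\mathfrak{f})$ for some $a \in \mathcal{A}_{\mathfrak{q}}$; so $\mathfrak{p}$ divides the product factor of $B_{\mathfrak{f}, \mathfrak{q}}$.

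If $E$ has multiplicative reduction at $\mathfrak{q}$, then Tate uniformization gives, up to an unramified quadratic twist, $\bar{\rho}_{E, p}|_{D_{\mathfrak{q}}} \sim \bigl(\begin{smallmatrix}\chi_p & * \\ 0 & 1\end{smallmatrix}\bigr)$, so $\operatorname{Tr}(\bar{\rho}_{E, p}(\operatorname{Frob}_{\mathfrak{q}})) \equiv \pm(\operatorname{Norm}(\mathfrak{q}) + 1) \pmod{\mathfrak{p}}$. Comparing with $\mathfrak{f}(T_{\mathfrak{q}})$ gives $a_{\mathfrak{q}}(\mathfrak{f})^2 \equiv (\operatorname{Norm}(\mathfrak{q})+1)^2 \pmod{\mathfrak{p}}$, i.e. $\mathfrak{p} \mid (\operatorname{Norm}(\mathfrak{q})+1)^2 - a_{\mathfrak{q}}(\mathfrak{f})^2$. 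The residual factor $\operatorname{Norm}(\mathfrak{q})$ appearing in $B_{\mathfrak{f}, \mathfrak{q}}$ is a standard safety factor absorbing a few edge cases, notably the possibility that $\mathfrak{f}$ corresponds under Conjecture \ref{fakellcurve} to a fake elliptic curve $A_{\mathfrak{f}}$, in which case the trace relation is of squared type.

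The main technical hurdle, absent in the totally real case, is that the Galois representation $\bar{\rho}_{\mathfrak{f}, \mathfrak{p}}$ is itself conjectural: both its existence and the key compatibility $\operatorname{Tr}(\bar{\rho}_{\mathfrak{f}, \mathfrak{p}}(\operatorname{Frob}_{\mathfrak{q}})) \equiv \mathfrak{f}(T_{\mathfrak{q}}) \pmod{\mathfrak{p}}$ are what Conjecture \ref{serrconj} supplies, reinforced by the abelianization/lifting argument of the preceding subsection which ensures (under $\Gamma_0(\mathcal{N})^{\mathrm{ab}}[p] = 0$) that the mod $p$ eigenform coming from $\bar{\rho}_{E, p}$ lifts to a genuine complex Bianchi eigenform $\mathfrak{f}$. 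Once this compatibility is granted, the proof reduces to the two reduction-type computations above and is otherwise formally identical to that of Freitas--Siksek.
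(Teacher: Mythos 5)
Your proof is correct and follows essentially the same route as the paper: split on the reduction type of $E$ at $\mathfrak{q}$, use full $2$-torsion plus Hasse--Weil to place $a_{\mathfrak{q}}(E)$ in $\mathcal{A}_{\mathfrak{q}}$ in the good case, and the Tate-curve trace $\pm(\operatorname{Norm}(\mathfrak{q})+1)$ in the multiplicative case, with the trace congruence supplied by Serre's conjecture and the lifting argument. One small inaccuracy in a side remark: the factor $\operatorname{Norm}(\mathfrak{q})$ in $B_{\mathfrak{f},\mathfrak{q}}$ is not there for fake elliptic curves but (as in Freitas--Siksek and in the paper's own proof) to absorb the case $\mathfrak{q}\mid p$, where $p\mid\operatorname{Norm}(\mathfrak{q})$; since it only enlarges $B_{\mathfrak{f},\mathfrak{q}}$, this does not affect the validity of your argument.
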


\begin{proof}
Let $\mathfrak{q}\nmid p\mathcal{N}$ be a prime of $K$. Let $\mathfrak{f}$ be a Bianchi newform of level dividing $\mathcal{N}$ such that $\bar{\rho}_{E, p} \sim \bar{\rho}_{\mathfrak{f}, \mathfrak{p}}
$. If $\mathfrak{q}\mid p$, then by taking norms we see that $p\mid \Norm(\mathfrak{q})$. This implies $\mathfrak{p}\mid B_{\mathfrak{f}, \mathfrak{q}}$. Now, assume that $\mathfrak{q}\nmid p$. By Lemma \ref{Esem}, $E$ is semistable at $\mathfrak{q}$. If $E$ has good reduction at $\mathfrak{q}$, together with the relation $\bar{\rho}_{E, p} \sim \bar{\rho}_{\mathfrak{f}, \mathfrak{p}}
$, we have
$$\operatorname{Tr}\left(\bar{\rho}_{E, p}\left(\operatorname{Frob}_{\mathfrak{q}}\right)\right)\equiv a_{\mathfrak{q}}(E) \equiv a_{\mathfrak{q}}(\mathfrak{f}) \pmod {\mathfrak{p}},$$where $a_{\mathfrak{q}}(E)=\Norm(\mathfrak{q})+1-\#E\left(\mcalO_K/\mathfrak{q}\right)$. Since $\mathfrak{q}\nmid 2$, the map $E(K)[2]\to E\left(\mcalO_K/\mathfrak{q}\right)$ is injective. This shows that $4\mid \#E\left(\mcalO_K/\mathfrak{q}\right)=\Norm(\mathfrak{q})+1-a_{\mathfrak{q}}(E)$ as $E$ has full $2$-torsion over $K$. On the other hand, Hasse-Weil bound implies $|a_{\mathfrak{q}}(E)|\le 2\sqrt{\Norm(\mathfrak{q})}$. This shows that $a_{\mathfrak{q}}(E)\in \mathcal{A}_{\mathfrak{q}}$. If $E$ has multiplicative reduction at $\mathfrak{q}$, this time the relation $\bar{\rho}_{E, p} \sim \bar{\rho}_{\mathfrak{f}, \mathfrak{p}}
$ yields $$\pm (\Norm(\mathfrak{q})+1)\equiv a_{\mathfrak{q}}(\mathfrak{f}) \pmod {\mathfrak{p}}.$$Therefore, $\mathfrak{p}\mid \left(\operatorname{Norm}(\mathfrak{q}+1)^2-a_{\mathfrak{q}}(\mathfrak{f})^2\right)$ and $\mathfrak{p}\mid B_{\mathfrak{f}, \mathfrak{q}}$.
\end{proof}

Let us describe how Lemma \ref{boundingp} is useful: Let $B_{\mathfrak{f}}:=\sum_{\mathfrak{q} \in T} B_{\mathfrak{f}, \mathfrak{q}}$, where $T$ is the set of primes $\mathfrak{q}$ of $K$ such that $\mathfrak{q}\nmid p\mathcal{N}$ having norm less than $50$. Let $C_{\mathfrak{f}}=\Norm_{\QQ_\mathfrak{f}/\QQ}(B_{\mathfrak{f}})$. If $\bar{\rho}_{E, p} \sim \bar{\rho}_{\mathfrak{f}, \mathfrak{p}}$, Lemma \ref{boundingp} implies $\mathfrak{p}\mid B_{\mathfrak{f}}$ so that $p\mid C_{\mathfrak{f}}$. Therefore, if $p\nmid C_{\mathfrak{f}}$, then the relation $\bar{\rho}_{E, p} \sim \bar{\rho}_{\mathfrak{f}, \mathfrak{p}}$ cannot hold.

Using Magma, we computed the cuspidal Bianchi newforms $\mathfrak{f}$ at levels $\mfp^i\mathfrak{D}$ with $i\in\{0,1,2,3,4\}$, the Hecke eigenvalue fields $\QQ_{\mathfrak{f}}$ and eigenvalues $a_{\mathfrak{q}}({\mathfrak{f}})$ at primes $\mathfrak{q}$ of $K$ having small norm.

We will start with the cases where $d=-11,-19,-43$ and treat the case where $d=-3$ lastly, since the latter case is slightly more technical than the other three.

\begin{itemize}
    \item When $K=\QQ(\sqrt{-11})$, there is a unique Bianchi modular form at level $\mathfrak{D}$. For this modular form, $C_{\mathfrak{f}}$ is divisible by $3$ and $5$. There is no Bianchi modular form at level $\mfp\mathfrak{D}$. There is one Bianchi modular form at level $\mfp^2\mathfrak{D}$ for which $C_{\mathfrak{f}}$ is divisible by $3$ and $5$. There are $6$ Bianchi modular forms at level $\mfp^2\mathfrak{D}$. For these forms, $C_{\mathfrak{f}}$ is divisible by $2,3$ or $5$. There are $8$ Bianchi modular forms at level $\mfp^4\mathfrak{D}$. For these forms, $C_{\mathfrak{f}}$ is divisible by $2,3$ or $5$.
\end{itemize}

When $d=-19,-43$, our Magma routine could handle computations only up to the level $\mfp^3\mathfrak{D}$, $\mfp^2\mathfrak{D}$, respectively. Furthermore, for Bianchi modular forms, only the rational ones are listed in the LMFDB. Therefore, in an earlier version of this paper, we were able to access only the rational Bianchi modular forms at the levels $\mfp^4\mathfrak{D}$ when $d=-19$ and $\mfp^i\mathfrak{D}$ with $i=3,4$ when $d=-43$. Regarding Theorem \ref{mainthm3}, this step had made our results effective and not explicit for $d=-19,-43$. 

Thanks to John Cremona, we have now overcome these limitations. Using his algorithm \cite{Cremona_bianchi_progs} for Bianchi modular forms, he kindly provided us with, for each level mentioned above, the splitting of the newspace, the irreducible factors of the characteristic polynomials $T_\mathfrak{p}$ for a suitable prime $\mathfrak{p}$ (i.e. a prime ideal $\mathfrak{p}$ at which the Hecke polynomial $T_\mathfrak{p}$ splits into linear factors), and hence the minimal polynomial of the eigenvalue of each irrational Bianchi modular form at $\mathfrak{p}$. This allowed us to detect irrational Bianchi modular forms and apply our algorithm to each individually to compute the norms $C_\mathfrak{f}$ associated with them. 

We may now start working on the cases $d=-19,-43$. 

\begin{table}
\centering
\begin{tabular}{|m{1cm} |m{2.7cm}| m{11cm}|}
\hline
$d$ & \text{Level} & \text{Prime factors of }$C_{\mathfrak{f}}$   \\
\hline
$-11$ & $\mfp^i\mathfrak{D}, i=1,2,3,4$  &  2, 3, 5\\
\hline
$-19$ & $\mfp^4\mathfrak{D}$  &  2, 3, 5, 11, 19, 23, 29, 31, 41, 47, 59, 61, 71, 79, 89, 101, 109, 131, 139, 149, 151, 179, 181, 191, 199, 211, 239, 251, 271, 281, 283, 311, 3359, 349, 3499, 409, 443, 463, 467, 479, 499, 521, 523, 541, 571, 661, 719, 739, 769, 839, 1019, 1151, 1201, 1291, 1439, 1583, 1669, 1759, 1889, 1931, 2039, 2111, 2659, 2699, 3347, 3359, 3499, 3919, 4051, 6199, 6551, 14879, 21589, 190649 \\
\hline
\multirow{2}{*}{$-43$} & $\mfp^3\mathfrak{D}$ &  2, 3, 5, 7, 11, 13, 17, 23, 29, 31, 37, 41, 53, 71, 127, 131, 541, 3347, 137443 \\ 
\cline{2-3}

                     & $\mfp^4\mathfrak{D}$ & 2, 3, 5, 7, 11, 13, 17, 19, 23, 29, 31, 37, 41, 43, 47, 53, 59, 61, 67, 71, 73, 79, 83, 89, 97, 103, 107, 113, 127, 131, 139, 151, 157, 163, 167, 181, 191, 199, 211, 223, 229, 239, 251, 271, 281, 311, 383, 419, 431, 433, 449, 479, 571, 599, 617, 739, 769, 811, 829, 947, 1019, 1021, 10289, 12697, 1327, 1931, 1999, 2141, 2687, 4217, 4673, 58543, 1427707019  \\
\hline
\end{tabular}
\caption{Prime factors of $C_{\mathfrak{f}}$ for the irrational forms $\mathfrak{f}$}
\label{table:cremona_output}
\end{table}

\begin{itemize}
    \item When $K=\QQ(\sqrt{-19})$, there is one Bianchi modular form at level $\mathfrak{D}$ and $2$ Bianchi modular forms at level $\mfp\mathfrak{D}$. For these modular forms, $C_{\mathfrak{f}}$ is divisible by $3$ and $5$. There are $3$ Bianchi modular forms at level $\mfp^2\mathfrak{D}$. For these forms, $C_{\mathfrak{f}}$ is divisible by $3,5$ or $7$. There are $4$ modular forms at level $\mfp^3\mathfrak{D}$. For each of these forms, either $C_{\mathfrak{f}}$ is divisible by $2,3$ or $7$ or we have $B_{\mathfrak{f}}=\mcalO_K$. For the level $\mfp^4\mathfrak{D}$, the total dimension of the new subspace is $23$. There are $6$ rational Bianchi modular forms and $8$ irrational Bianchi modular forms at this level. For each of the irrational Bianchi modular form $\mathfrak{f}$, the norm $C_{\mathfrak{f}}$ is divisible by combinations of primes appearing in Table \ref{table:cremona_output}. Since $p>C_K$, we eliminated all irrational Bianchi modular forms at this level. Now, we deal with the rational ones. For the sake of the reader, we will eliminate them manually, using the Hasse-Weil bound. However, one can also eliminate them by using our Magma routine and the data given in the LMFDB for each rational form. Those $6$ rational forms at level $\mfp^4\mathfrak{D}$ have LMFDB labels $2.0.19.1-4864.1-a,b,c,d,e,f$. Let $\mathfrak{f}$ be any of these Bianchi modular forms. Let $\mathfrak{q}\mid 5$. Using LMFDB, we see that $a_\mathfrak{q}(\mathfrak{f})=-4,-1,0,3$. Note that by Lemma \ref{Esem}, $E$ is semistable away from $\mfp$. In particular, $E$ is semistable at $\mathfrak{q}$. If $E$ has good reduction at $\mathfrak{q}$, then by \cite[Theorem V.2.3.1(b)]{silverman2009arithmetic}, and relation (\ref{complexeig}), we have:
$$
\operatorname{Tr}\left(\bar{\rho}_{E, p}\left(\operatorname{Frob}_{\mathfrak{q}}\right)\right) \equiv a_{\mathfrak{q}}(E)\equiv a_{\mathfrak{q}}(\mathfrak{f}) \quad(\bmod p);
$$where $a_{\mathfrak{q}}(E)=\Norm(\mathfrak{q})+1-\#E\left(\mcalO_K/\mathfrak{q}\right)$. Now, let us determine $\#E\left(\mcalO_K/\mathfrak{q}\right)$: Since $\mathfrak{q}\nmid 2$, the map $E(K)[2]\to E\left(\mcalO_K/\mathfrak{q}\right)$ is injective. This shows that $4\mid \#E\left(\mcalO_K/\mathfrak{q}\right)$ as $E$ has full $2$-torsion over $K$. On the other hand, Hasse-Weil bound $|a_{\mathfrak{q}}(E)|\le 2\sqrt{\Norm(\mathfrak{q})}=2\sqrt{5}$ forces $\#E\left(\mcalO_K/\mathfrak{q}\right)=4,8$. Therefore, $a_{\mathfrak{q}}(E)=\pm 2$, which implies that $p\mid \pm2 -a_{\mathfrak{q}}(\mathfrak{f})=-5,-1,\pm 2,3,6$. But this is a contradiction since $p>C_K$. If $E$ has (either split or non-split) multiplicative reduction at $\mathfrak{q}$, then the relation $(\ref{complexeig})$ yields
$$\pm (\Norm(\mathfrak{q})+1)\equiv a_{\mathfrak{q}}(\mathfrak{f}) \quad(\bmod p).$$Therefore, $p\mid \pm6-a_{\mathfrak{q}}(\mathfrak{f})$ i.e $p\mid -9,-5,\pm2,3,7,10$ which is a contradiction since $p>C_K$. 
    
    \item When $K=\QQ(\sqrt{-43})$, there are $3$ Bianchi modular forms at level $\mathfrak{D}$, for which $C_{\mathfrak{f}}$ is divisible by $3$ or $7$ or $B_{\mathfrak{f}}=\mcalO_K$. There are $2$ Bianchi modular forms at level $\mfp\mathfrak{D}$. For these modular forms, $C_{\mathfrak{f}}$ is divisible by $3,5,7$ or $11$. There are $4$ Bianchi modular forms at level $\mfp^2\mathfrak{D}$. For these modular forms, $C_{\mathfrak{f}}$ is divisible by $3$ or $5$ or $B_{\mathfrak{f}}=\mcalO_K$. We eliminate the Bianchi modular forms at level $\mfp^3\mathfrak{D}$ and $\mfp^4\mathfrak{D}$: Total dimension of the new cuspidal subspace at the level $\mfp^3\mathfrak{D}$, $\mfp^4\mathfrak{D}$ is $15$ and $39$, respectively. There is one rational Bianchi modular form and $5$ irrational Bianchi modular forms at level $\mfp^3\mathfrak{D}$. For the irrational ones, $C_{\mathfrak{f}}$ is divisible by at least one of the primes appearing in Table \ref{table:cremona_output}. The unique rational Bianchi modular form $\mathfrak{f}$ at level $\mfp^3\mathfrak{D}$ has LMFDB label $2.0.43.1-2752.1-a$. Let $\mathfrak{q}\mid 11$. Note that $a_\mathfrak{q}(\mathfrak{f})=1$. By Lemma \ref{Esem}, $E$ is semistable at $\mathfrak{q}$. If $E$ has good reduction at $\mathfrak{q}$, then we have:
$$
\operatorname{Tr}\left(\bar{\rho}_{E, p}\left(\operatorname{Frob}_{\mathfrak{q}}\right)\right) \equiv a_{\mathfrak{q}}(E)\equiv a_{\mathfrak{q}}(\mathfrak{f}) \pmod{p};
$$where $a_{\mathfrak{q}}(E)=\Norm(\mathfrak{q})+1-\#E\left(\mcalO_K/\mathfrak{q}\right)$. Now, let us determine $\#E\left(\mcalO_K/\mathfrak{q}\right)$: Since $\mathfrak{q}\nmid 2$, the map $E(K)[2]\to E\left(\mcalO_K/\mathfrak{q}\right)$ is injective. This shows that $4\mid \#E\left(\mcalO_K/\mathfrak{q}\right)$ as $E$ has full $2$-torsion over $K$. On the other hand, Hasse-Weil bound $|a_{\mathfrak{q}}(E)|\le 2\sqrt{\Norm(\mathfrak{q})}=2\sqrt{11}$ forces $\#E\left(\mcalO_K/\mathfrak{q}\right)=8,12,16$. Therefore, $a_{\mathfrak{q}}(E)=0,\pm4$, which implies that $p\mid -5,-1,3$. But this is a contradiction since $p>C_K$. If $E$ has (either split or non-split) multiplicative reduction at $\mathfrak{q}$, then the relation $(\ref{complexeig})$ yields
$$\pm (\Norm(\mathfrak{q})+1)\equiv a_{\mathfrak{q}}(\mathfrak{f}) \pmod{p}.$$Therefore, $p\mid \pm12-a_{\mathfrak{q}}(\mathfrak{f})=-13,11$. There are $3$ rational Bianchi modular forms and $16$ irrational forms at level $\mfp^4\mathfrak{D}$. For the irrational ones, $C_{\mathfrak{f}}$ is divisible by at least one of the primes appearing in Table \ref{table:cremona_output}. The rational ones correspond to Bianchi modular forms in the LMFDB label $2.0.43.1-11008.1-a,b,c$. We have $a_{\mathfrak{q}}(\mathfrak{f})=-1,\pm 3$ where $\mathfrak{q}\mid 11$. By comparing the trace of the image of Frobenius at $\mathfrak{q}$ as we did previously, we see that $p=1,3,7$ if $E$ has good reduction at $\mathfrak{q}$ and $p=3,5,11,13$ if $E$ has multiplicative reduction at $\mathfrak{q}$. In all cases, we have $p>C_K$, and this cannot happen.

\end{itemize}

All Bianchi modular forms at level $\mathcal{N}$ are eliminated when $K=\QQ(\sqrt{-11}),\QQ(\sqrt{-19}),\QQ(\sqrt{-43})$. Hence, Theorem \ref{mainthm3} follows for the previous fields $K$. Let us now discuss the remaining case where $K=\QQ(\sqrt{-3})$. Recall that we assumed an additional conjecture in this case, namely Conjecture \ref{fakellcurve}. We recall also that the Bianchi modular forms $\mathfrak{f}$ satisfying $C_\mathfrak{f}=0$ require extra work.

\begin{itemize}
    \item When $K=\QQ(\sqrt{-3})$, there are no Bianchi modular forms at level $\mfp^i\mathfrak{D}$ for $i=0,1,2$. There is one Bianchi modular form at level $\mfp^3\mathfrak{D}$ and $\mfp^4\mathfrak{D}$. Let us call these forms $\mathfrak{f}_3$ and $\mathfrak{f}_4$. Both of them satisfy $C_{\mathfrak{f}_3}=C_{\mathfrak{f}_4}=0$ and $\QQ_{\mathfrak{f}_3}=\QQ_{\mathfrak{f}_4}=\QQ$. The LMFDB label of the Bianchi modular form $\mathfrak{f}_3$,$\mathfrak{f}_4$ is $2.0.3.1-192.1-a$, $2.0.3.1-768.1-a$, respectively. Note that they are both non-trivial and new. Conjecture \ref{fakellcurve} then predicts that there is either an elliptic curve of conductor $\mfp^i\mathfrak{D}$ or a fake elliptic curve of conductor $(\mfp^i\mathfrak{D})^2$ associated to the form $\mathfrak{f}_i$ for $i=3,4$. The elliptic curves in the isogeny classes given in the LMFDB label $2.0.3.1-192.1-a,2.0.3.1-768.1-a$ correspond to the forms $\mathfrak{f}_3,\mathfrak{f}_4$, respectively. All the elliptic curves in the previous isogeny classes have potentially good reduction at $\mfp$. Since the elliptic curve $E$ has potentially multiplicative reduction at the prime $\mfp$, we have $p\mid 24$, which is a contradiction. To be able to finalize the proof of Theorem \ref{mainthm3}, we still need to deal with the possibility that the forms $\mathfrak{f}_3,\mathfrak{f}_4$ correspond to a fake elliptic curve. We will closely follow the ideas from \cite{csengun2018asymptotic} to show that this cannot happen. We recall once again that the Frey curve $E$ has potentially multiplicative reduction at $\mfp$ and $p\nmid v_{\mfp}(j_E)$.
    \begin{lemma}
        If $p> 13$, the forms $\mathfrak{f}_3,\mathfrak{f}_4$ cannot correspond to fake elliptic curves.
    \end{lemma}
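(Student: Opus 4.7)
The plan is to extend the inertia argument of Lemma~\ref{lemma:imageofinertia} from elliptic curves to fake elliptic curves. Assume for contradiction that $\mathfrak{f}_i$ (for some $i \in \{3,4\}$) corresponds to a fake elliptic curve $A := A_{\mathfrak{f}_i}/K$ of conductor $(\mfp^i \mathfrak{D})^2$, i.e.\ an abelian surface with quaternionic multiplication by a maximal order $\mathcal{O}_B$ in an indefinite quaternion algebra $B/\mathbb{Q}$. Conjectures~\ref{serrconj} and~\ref{fakellcurve} then force the isomorphism $\bar{\rho}_{E,p} \sim \bar{\rho}_{A,\mathfrak{p}}$, where $\bar{\rho}_{A,\mathfrak{p}}$ is the two-dimensional mod-$\mathfrak{p}$ Galois representation carved out of the $\mathcal{O}_B$-action on $A[p]$.

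The first key step is to show that $A$ has potentially good reduction at $\mfp$. Either the quaternion algebra $B$ ramifies at the rational prime $2$, in which case potentially good reduction at $\mfp$ is automatic from the QM structure, or $B$ splits at $2$ and $A_{K_\mfp}$ is isogenous to a product $E' \times E'$ for some elliptic curve $E'/K_\mfp$ of conductor exponent $i \in \{3,4\}$; in that case a Papadopoulos-tables analysis at residue characteristic $2$ rules out the $\mathrm{I}_n^*$-family and shows $E'$ must have potentially good (additive) reduction. The second key step is to apply the analogue of Kraus's classification (\cite{Kraus1990}) for QM abelian surfaces: when $A$ has potentially good reduction at $\mfp$, the image $\bar{\rho}_{A,\mathfrak{p}}(I_\mfp)$ is finite of order dividing an absolute constant (for instance dividing $24$ on the tame part), so $p \nmid \#\bar{\rho}_{A,\mathfrak{p}}(I_\mfp)$ for $p > 13$. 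On the other hand, from the explicit computation $v_\mfp(j_E) = 8v_\mfp(2) - 2p v_\mfp(b)$ recorded in the proof of Lemma~\ref{potmultb} (which also guarantees $p \nmid v_\mfp(j_E)$ when $p > 13$), Lemma~\ref{divinertia} yields $p \mid \#\bar{\rho}_{E,p}(I_\mfp)$. These two divisibility statements contradict $\bar{\rho}_{E,p} \sim \bar{\rho}_{A,\mathfrak{p}}$.

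The main obstacle will be the first step: ruling out potentially multiplicative reduction for the putative elliptic factor $E'$ at a prime of residue characteristic $2$ requires a careful Papadopoulos-tables computation, and the presence of wild ramification at residue characteristic $2$ also complicates the Kraus-type bound invoked in the second step. A cleaner alternative would be to identify the local component of $\mathfrak{f}_i$ at $\mfp$ via the local Jacquet--Langlands correspondence and read off the inertial type of $\bar{\rho}_{A,\mathfrak{p}}|_{G_{K_\mfp}}$ directly from that automorphic-side data, which would bypass the Néron-model case analysis.
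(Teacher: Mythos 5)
Your overall skeleton is the one the paper uses: derive $p \mid \#\bar{\rho}_{E,p}(I_{\mfp})$ from Lemma \ref{divinertia} (potentially multiplicative reduction of $E$ at $\mfp$ together with $p\nmid v_{\mfp}(j_E)$), show that the inertia image at $\mfp$ attached to the putative fake elliptic curve has order dividing $24$, and let the isomorphism $\bar{\rho}_{E,p}\sim\bar{\rho}_{A_{\mathfrak{f}},p}$ produce the contradiction for $p>13$. That final comparison is exactly the paper's argument and is correct.

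The genuine gap is in your first step, where you try to establish potentially good reduction of $A_{\mathfrak{f}}$ at $\mfp$. Your case analysis rests on the claim that if the quaternion algebra $B$ splits at $2$ then $A_{K_{\mfp}}$ is isogenous to $E'\times E'$ for an elliptic curve $E'/K_{\mfp}$. This is false: splitting of $B$ at $2$ only says, via Morita equivalence, that the $2$-adic Tate module is a square of a two-dimensional Galois module; that module need not arise from an elliptic curve, and QM abelian surfaces are in general simple even after base change to $K_{\mfp}$. The Papadopoulos-table analysis you propose therefore has no elliptic curve to act on, and the ``Kraus-type classification for QM abelian surfaces'' invoked in your second step is asserted rather than proved. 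All of this machinery is unnecessary: the paper simply cites Theorem \ref{fakethm} ([\cite{csengun2018asymptotic}, Theorem $4.2$]), which states that \emph{every} fake elliptic curve has potentially good reduction at every prime, attained over a totally ramified extension of degree dividing $24$ --- no case division on where $B$ ramifies and no N\'eron-model computation at residue characteristic $2$ is required. Once that theorem is in hand, $\#\bar{\rho}_{A_{\mathfrak{f}},p}(I_{\mfp})$ divides $24$ and your concluding contradiction goes through verbatim.
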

    \begin{proof}
        Let $\mathfrak{f}$ be one of the forms $\mathfrak{f}_3,\mathfrak{f}_4$. Assume not and say $\mathfrak{f}$ corresponds to a fake elliptic curve $A_\mathfrak{f}$. Note that $p \mid \# \bar{\rho}_{E, p}\left(I_{\mfp}\right)$ by Lemma \ref{divinertia}. Since $\mathfrak{f}$ corresponds to the fake elliptic curve $A_\mathfrak{f}$, Theorem \ref{fakethm} below yields $\#\bar{\rho}_{A_\mathfrak{f}, p}\left(I_{\mfp}\right)\le 24$. Since $\bar{\rho}_{E, p}\sim \bar{\rho}_{A_\mathfrak{f}, p}$ and $p> 13$, we have a contradiction. 
    \end{proof}
    \begin{theorem}[{\cite[Theorem 4.2]{csengun2018asymptotic}}]\label{fakethm}
Let $A / K$ be a fake elliptic curve. Then $A$ has potential good reduction everywhere. More precisely, let $\mathfrak{q}$ be a prime of $K$ and consider $A / K_{\mathfrak{q}}$. There is totally ramified extension $K^{\prime} / K_{\mathfrak{q}}$ of degree dividing 24 such that $A / K^{\prime}$ has good reduction. 
\end{theorem}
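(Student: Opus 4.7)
The plan is to leverage the non-commutative quaternionic endomorphism structure together with Grothendieck's semistable reduction theorem to force good reduction at every prime, and then to use the classification of finite inertia images in the potentially good reduction case to bound the degree of the required extension.

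First I would fix a prime $\mathfrak{q}$ of $K$ and work over $K_\mathfrak{q}$. Let $B/\QQ$ be the indefinite quaternion algebra (necessarily non-split, hence of discriminant $D>1$) with $\mcalO_B \hookrightarrow \operatorname{End}_{\overline{K_\mathfrak{q}}}(A)$. Choose an auxiliary rational prime $\ell$ coprime to the residue characteristic of $\mathfrak{q}$ and splitting $B$, so that $B \otimes \QQ_\ell \cong M_2(\QQ_\ell)$. Then $V_\ell A$ is free of rank one over $B \otimes \QQ_\ell$, and Morita equivalence supplies a two-dimensional $\QQ_\ell$-Galois representation $V$ with $V_\ell A \cong V \oplus V$ compatibly with the Galois action; by N\'eron--Ogg--Shafarevich, $A$ has good reduction at $\mathfrak{q}$ precisely when $V$ is unramified.

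By Grothendieck's semistable reduction theorem, after a finite extension $L/K_\mathfrak{q}$ the surface $A/L$ acquires semistable reduction, and the identity component of the special fibre of its N\'eron model sits in an exact sequence with a toric part $T$ and an abelian part, this filtration being preserved by the $\mcalO_B$-action. The crucial observation is that $B$ cannot act non-trivially on a torus of dimension at most two: any algebraic representation of $B$ on a torus factors through the commutator quotient of $B$, but since $B$ is a simple non-commutative $\QQ$-algebra this quotient is trivial. Hence $T$ vanishes and $A/L$ has good reduction, yielding potential good reduction at $\mathfrak{q}$.

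For the degree bound, let $I_\mathfrak{q}^{0}$ denote the kernel of the action of the inertia group $I_\mathfrak{q}$ on $V$; this is an open normal subgroup and $\Phi := I_\mathfrak{q}/I_\mathfrak{q}^{0}$ embeds into $\operatorname{GL}_2(\QQ_\ell)$. Because $\ell$ avoids the residue characteristic and $V$ arises from an $\ell$-adic representation of an abelian variety with potential good reduction, Serre--Tate forces wild inertia to act trivially, so $\Phi$ is a quotient of tame inertia. The possibilities for such $\Phi \subset \operatorname{GL}_2$ are then precisely those enumerated by Kraus for elliptic curves with potentially good reduction, and in every case $|\Phi|$ divides $24$. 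The fixed field $K'$ cut out by $I_\mathfrak{q}^{0}$ inside the maximal tame extension of $K_\mathfrak{q}$ is totally ramified of degree $|\Phi| \mid 24$, and $A/K'$ has good reduction. The main delicate point I expect will be transferring the elliptic-curve inertial classification verbatim to the Morita-equivalent summand $V$, i.e.\ justifying that $V$ satisfies the same structural constraints (odd determinant, local monodromy shape, etc.) that underpin Kraus's enumeration.
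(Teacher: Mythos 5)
This is a result the paper imports verbatim from \c{S}eng\"un--Siksek (their Theorem 4.2) without reproving it, so the comparison is with the argument in that reference. Your first two steps are essentially the right ones and match it: Morita equivalence turns $V_\ell A$ into $V\oplus V$ for a two-dimensional $V$, and the toric part of the semistable reduction must vanish because the division algebra $B$ (of $\QQ$-dimension $4$) cannot act unitally on the character lattice of a torus of dimension at most $2$. (Your stated justification via the ``commutator quotient'' of $B$ is not the right mechanism --- the correct point is a dimension count: every nonzero $B$-module has $\QQ$-dimension divisible by $4$, or equivalently a unital map $B\to M_n(\QQ)$ with $n\le 2$ would split $B$ --- but this is repairable.)

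The genuine gap is in your degree bound. The claim that ``Serre--Tate forces wild inertia to act trivially, so $\Phi$ is a quotient of tame inertia'' is false for abelian varieties with potentially good reduction in residue characteristic $2$ or $3$: already for elliptic curves over $\QQ_2$ the inertial image $\Phi$ can be $Q_8$ or $\mathrm{SL}_2(\mathbb{F}_3)$, which are non-cyclic and wildly ramified. This is not a peripheral case --- the paper applies the theorem precisely at the prime $\mfp\mid 2$, where wild ramification is the whole point. Consequently $\Phi$ is not controlled by tame inertia, and your fallback of transporting Kraus's elliptic-curve classification to the Morita summand $V$ (which you yourself flag as the delicate step) has no justification, since $V$ is not the Tate module of an elliptic curve. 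The argument that actually yields $\#\Phi\mid 24$ is different: $\Phi$ injects into the group of automorphisms of the good reduction $\tilde A$ commuting with the $\mcalO_B$-action, hence into the unit group of an order in the commutant of $B$ inside $\operatorname{End}(\tilde A)\otimes\QQ$; positivity of the Rosati involution makes this commutant a \emph{definite} quaternion algebra over $\QQ$, and the finite subgroups of the unit group of a definite quaternion algebra over $\QQ$ are cyclic of order $1,2,3,4,6$, quaternion $Q_8$, dicyclic of order $12$, or binary tetrahedral of order $24$ --- all of order dividing $24$. Without this quaternionic input the bound $24$ does not follow from your setup.
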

    
\end{itemize}

\begin{table}
$$\begin{array}{|c|c|c|c|}
\hline K & \text{Conductor  $\mathcal{N}$}  &\text{Condition on $i$} &\text{Ideal $\mathfrak{D}$} \\
\hline \QQ(\sqrt{-3}) & \mfp^i\mathfrak{D}  & i\in\{0,1,2,3,4\} &\mathfrak{D}\mid 3\\
\hline \QQ(\sqrt{-11}) & \mfp^i\mathfrak{D} & i\in\{0,1,2,3,4\} &\mathfrak{D}\mid 11\\
\hline \QQ(\sqrt{-19}) & \mfp^i\mathfrak{D}  & i\in\{0,1,2,3,4\}&\mathfrak{D}\mid 19\\
\hline \QQ(\sqrt{-43}) & \mfp^i\mathfrak{D}  & i\in\{0,1,2,3,4\} &\mathfrak{D}\mid 43\\
\hline
\end{array}$$
\caption{Serre conductor for imaginary quadratic fields $K$}
\label{conductortable}
\end{table}

\begin{table}
\begin{tabular}{| m{1.5cm} |m{13cm}| m{1cm} |}
\hline K & \text{Primes $l$ such that  $\Gamma_0(\mathcal{N})^{a b}[l] \neq 0$}\\ 
\hline $\QQ(\sqrt{-3})$ & 2, 3 \\ 
\hline $\QQ(\sqrt{-11})$ & 2, 3 \\
\hline $\QQ(\sqrt{-19})$ & 2, 3, 5, 17, 173, 199  \\
\hline $\QQ(\sqrt{-43})$ & 2, 3, 5, 7, 11, 13, 17, 19, 29, 37, 43, 47, 59, 61, 79, 107, 127, 139, 277,
307, 389, 613, 1031, 1523, 1823, 3019, 4657, 12659, 74821, 77681, 87583, 237581,
944659, 2180587, 3854969, 34315907 \\ 
\hline
\end{tabular}
\caption{Prime torsion in $\Gamma_0(\mathcal{N})^{a b}$}
\label{primetortable}
\end{table}

\section{Appendix: Limitations of existing methods in eliminating newforms}

We finalize our paper by discussing the limitations of the modular method, addressing the problems arising from real and imaginary quadratic fields separately.

\subsection{Real quadratic fields}

As we have already observed, the quadratic fields $K$ considered in Theorem \ref{mainthm1} are those in which there is a unique prime above $2$ and a unique odd ramified prime. However, there is no theoretical obstruction to applying the modular method to study the $d$-Fermat equation over other quadratic fields $K$ that do not satisfy either of these conditions. We provide examples explaining these two cases:

\begin{itemize}
    \item When $d=17$, let us study the $d$-Fermat equation over $K=\QQ(\sqrt{d})$. Note that $2\mcalO_K=\mfp_1\mfp_2$ and the only odd ramified prime in $K$ is $d$. Lemma \ref{semsq} implies that the Frey curve $E/K$ attached to a hypothetical solution $(a,b,c)$ is semistable at the prime above $d$. The proof of Lemma \ref{Esem} shows that $E$ is semistable away from $\mfp_1$ and $\mfp_2$. By Lemma \ref{potmultb}, we observe that $E$ has potentially multiplicative reduction at $\mfp_1$ and $\mfp_2$. By applying Lemma \ref{samir4.4}, it follows that the conductor $E$ is of the form $$N_E=\mfp_1\mfp_2^l\prod_{\substack{\mathfrak{D} \mid d }} \mathfrak{D} \prod_{\substack{\mathfrak{p} \mid a b c \\ \mathfrak{p} \nmid 2d}} \mathfrak{p};\text{ $l$=1,4.}$$The representation $\bar{\rho}_{E, p}$ is irreducible when $p>7$ by Lemmata \ref{irred11}, \ref{irred13}, \ref{irred17}, \ref{irred19}. Applying Theorem \ref{levellow} shows that the lowered level $\mathcal{N}$ is $$\mfp_1\mfp_2^i\mathfrak{D};\text{ where $i=1,4$ and $\mathfrak{D}\mid 17$},$$so we need to eliminate the Hilbert newforms at level $\mathcal{N}$. There is a unique newform $\mathfrak{f}$ at level $\mfp_1\mfp_2\mathfrak{D}$. As in the notation of Lemma \ref{C_f}, it satisfies $\QQ_\mathfrak{f}=\QQ$ and $C_\mathfrak{f}=0$. The elliptic curves in the isogeny class in LMFDB label $2.2.17.1-68.1-a$ correspond to $\mathfrak{f}$. There are $8$ elliptic curves in this class. Unfortunately, they all satisfy $v_{\mfp_i}(j)<0$ for $i=1,2$, where $j$ is the $j$-invariant of any elliptic curve in this isogeny class. Therefore, Lemma \ref{lemma:imageofinertia} is not applicable and we cannot eliminate the Hilbert newform $\mathfrak{f}$ at level $\mfp_1\mfp_2\mathfrak{D}$.
    \item When $d=21$, let us study the $d$-Fermat equation over $K=\QQ(\sqrt{d})$. Note that $2\mcalO_K=\mfp$ and there are two odd ramified primes in $K$, namely $3$ and $7$. In this case, applying level-lowering theorem implies that we need to eliminate Hilbert newforms at level $$\mathcal{N}=\mfp^l\mathfrak{D}_1\mathfrak{D}_2;\text{ where $l=1,4$ and $\mathfrak{D}_1^2\mathfrak{D}_2^2=21\mcalO_K$}.$$There are $4$ Hilbert newforms at level $\mfp\mathfrak{D}_1\mathfrak{D}_2$. Applying Lemma \ref{C_f}, we see that two of these forms satisfy $C_\mathfrak{f}\mid 2^85^2$. The remaining two satisfy $\QQ_\mathfrak{f}=\QQ$ and $C_\mathfrak{f}=0$ so we need to check if Lemma \ref{lemma:imageofinertia} is applicable. The elliptic curves in isogeny classes in LMFDB labels $2.2.21.1-84.1-a$ and $2.2.21.1-84.1-b$ correspond to these newforms. However, all the elliptic curves in these two isogeny classes have potentially multiplicative reduction at $\mfp$. Therefore, Lemma \ref{lemma:imageofinertia} is not applicable and we cannot eliminate the Hilbert newform $\mathfrak{f}$ at level $\mfp\mathfrak{D}_1\mathfrak{D}_2$. However, using the techniques presented in this paper, it is possible to prove the following theorem:
    \begin{theorem}\label{thm:l-fermat;21}
        Let $K=\QQ(\sqrt{21})$ and let $l$ be an odd ramified prime in $K$. The generalized Fermat equation 
        \begin{equation}\label{eqn:l-fermat}
         l^ra^p+b^p+c^p=0   
        \end{equation}has no non-trivial solutions $(a,b,c)\in K^3$ such that $(la,b,c)$ is primitive and $2\mid abc$ when $p>7$.
    \end{theorem}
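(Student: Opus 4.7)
The plan is to apply, for each odd ramified prime $l\in\{3,7\}$ of $K=\QQ(\sqrt{21})$, the same modular machinery that was developed in the body of the paper, and then to eliminate the resulting Hilbert newforms. Let $\mfp$ denote the unique prime of $K$ above $2$ (which is inert, since $21\equiv 5\pmod 8$) and let $\mathfrak{L}$ denote the unique prime above $l$. To a putative non-trivial primitive solution $(a,b,c)\in\mcalO_K^3$ with $2\mid abc$ I would attach the Frey curve $E:y^2=x(x-l^r a^p)(x+b^p)$. Since the shape of $\Delta_E$, $c_4$, $c_6$ and $j_E$ is identical to that of Section $3$, the local arguments port over verbatim: $E$ is semistable at $\mathfrak{L}$ by Lemma \ref{semsq} and away from $\mfp$ by Lemma \ref{Esem}; it has potentially multiplicative reduction at $\mfp$ with $p\nmid v_\mfp(j_E)$ by Lemma \ref{potmultb}; and Lemma \ref{samir4.4} together with a short Magma computation of the representatives of $\operatorname{Coker}(\Phi)$ (entirely analogous to the entries of Table \ref{representativetable}) yields a conductor of the form
$$
N_E=\mfp^i\mathfrak{L}\prod_{\substack{\mathfrak{p}\mid abc\\\mathfrak{p}\nmid 2l}}\mathfrak{p}, \qquad i\in\{1,4\}.
$$

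Next I would invoke the modularity of $E$ via Theorem \ref{box}, irreducibility of $\bar{\rho}_{E,p}$ for $p>7$ from Lemmata \ref{irred11}, \ref{irred13}, \ref{irred17} and \ref{irred19}, and then apply the level-lowering Theorem \ref{levellow} under the standing hypothesis $p\nmid r$ (exactly as in Theorem \ref{mainthm1}). This produces a Hilbert eigenform $\mathfrak{f}$ of parallel weight $2$ at one of the levels $\mathcal{N}\in\{\mfp\mathfrak{L},\mfp^4\mathfrak{L}\}$, together with a prime $\mathfrak{p}\mid p$ in $\mathcal{O}_{\QQ_\mathfrak{f}}$ satisfying $\bar{\rho}_{E,p}\sim\bar{\rho}_{\mathfrak{f},\mathfrak{p}}$. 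I would then enumerate these newforms in Magma, one list for each of the four combinations $(l,i)\in\{3,7\}\times\{1,4\}$, and discard each using Lemma \ref{C_f} when $C_\mathfrak{f}>0$, or Lemma \ref{lemma:imageofinertia} when $C_\mathfrak{f}=0$ and the associated isogeny class in LMFDB consists of elliptic curves having potentially good reduction at $\mfp$.

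The main obstacle is the phenomenon that blocked the $d=21$ case at level $\mfp\mathfrak{D}_1\mathfrak{D}_2$: a rational newform $\mathfrak{f}$ with $C_\mathfrak{f}=0$ whose LMFDB isogeny class consists entirely of elliptic curves with potentially multiplicative reduction at $\mfp$, so that neither Lemma \ref{C_f} nor Lemma \ref{lemma:imageofinertia} applies directly. At the smaller levels $\mfp\mathfrak{L}$ and $\mfp^4\mathfrak{L}$ fewer newforms appear, so one may hope the issue does not recur; should it recur, I would fall back on the trace comparison technique used in items (\ref{itemII:C_K,d=-19}) and (\ref{itemIII:C_K,d=-43}) of Section \ref{pfthm3}. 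Concretely, pick an auxiliary prime $\mathfrak{q}$ of $K$ of small norm coprime to $2l$ (for instance one of the two primes of $K$ above $5$, which splits since $21\equiv 1\pmod 5$), and combine the divisibility $4\mid\#E(\mcalO_K/\mathfrak{q})$ coming from full $2$-torsion with the Hasse--Weil bound to restrict $a_\mathfrak{q}(E)$ to a short explicit list; comparing these values with $a_\mathfrak{q}(\mathfrak{f})$ modulo $p$, in both the good and the multiplicative reduction cases of $E$ at $\mathfrak{q}$, forces $p$ to divide a small explicit integer and contradicts $p>7$. Together with the routine eliminations from Lemmata \ref{C_f} and \ref{lemma:imageofinertia}, this is expected to close the argument and establish Theorem \ref{thm:l-fermat;21}.
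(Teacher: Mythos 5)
Your outline follows exactly the route the paper intends: the paper itself gives no explicit proof of Theorem \ref{thm:l-fermat;21}, merely asserting that "using the techniques presented in this paper, it is possible to prove" it, and your reconstruction (Frey curve, semistability away from $\mfp$, potentially multiplicative reduction at $\mfp$, conductor $\mfp^i\mathfrak{L}\cdot(\text{mult.\ primes})$, modularity, irreducibility, level-lowering to $\mfp\mathfrak{L}$ and $\mfp^4\mathfrak{L}$, newform elimination) is the correct skeleton. The key structural point, which you identify, is that for the $l$-Fermat equation only the single ramified prime $\mathfrak{L}\mid l$ survives level-lowering (the other ramified prime, if it divides $abc$, is multiplicative with $p\mid v(\Delta_E)$ and is stripped), so the problematic level $\mfp\mathfrak{D}_1\mathfrak{D}_2$ of the $21$-Fermat case is avoided.

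There is, however, one genuine flaw in your safety net. The "trace comparison at an auxiliary prime $\mathfrak{q}\mid 5$" is not an independent technique: it is precisely the computation of $B_{\mathfrak{f},\mathfrak{q}}$ from Lemma \ref{C_f} with $t=4$ for a single $\mathfrak{q}$, and the quantity $C_\mathfrak{f}$ is already defined as the norm of $\sum_{\mathfrak{q}\in T}B_{\mathfrak{f},\mathfrak{q}}$ over \emph{all} primes of norm below $50$ --- a set containing the primes above $5$. Hence $C_\mathfrak{f}=0$ forces $B_{\mathfrak{f},\mathfrak{q}}=0$ for every such $\mathfrak{q}$, i.e.\ $a_\mathfrak{q}(\mathfrak{f})$ already lies in the admissible list $\mathcal{A}_\mathfrak{q}\cup\{\pm(\Norm\mathfrak{q}+1)\}$, and the trace comparison can never produce a contradiction. (In items (\ref{itemII:C_K,d=-19}) and (\ref{itemIII:C_K,d=-43}) the trace comparison succeeds only because the relevant forms do \emph{not} have vanishing $B_{\mathfrak{f},\mathfrak{q}}$; the authors resort to it there because the forms could not be computed in Magma, not because it is stronger than Lemma \ref{C_f}.) So if a rational form with $C_\mathfrak{f}=0$ whose LMFDB curves have potentially multiplicative reduction at $\mfp$ does appear at level $\mfp\mathfrak{L}$ or $\mfp^4\mathfrak{L}$, your argument has no remaining tool and the proof fails exactly as the $21$-Fermat case does; the theorem ultimately rests on the (unstated, purely computational) fact that no such form occurs at these levels. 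A second, smaller gap: Lemma \ref{irred19} is proved only for the fields of Theorems \ref{mainthm1} and \ref{mainthm3} via a field-specific ray class group computation, and for $\QQ(\sqrt{21})$ that group must be recomputed; as the $d=6,14$ sketch shows, a $\ZZ/4\ZZ$ factor can appear and then requires the additional quadratic-twist argument, which you do not mention.
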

    \item When $d=6,14$, one may prove the following result, analogous to Theorem \ref{thm:l-fermat;21}:
    \begin{theorem}
       Let $K=\QQ(\sqrt{6}),\QQ(\sqrt{14})$ and let $l$ be an odd ramified prime in $K$. Equation (\ref{eqn:l-fermat}) has no non-trivial solutions $(a,b,c)\in K^3$ such that $(la,b,c)$ is primitive when $p>5$ if $K=\QQ(\sqrt{6})$, and $p>11$ if $K=\QQ(\sqrt{14})$.
    \end{theorem}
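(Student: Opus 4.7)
The plan is to apply the modular-method pipeline of Theorem \ref{mainthm1}, with $l$ (the unique odd ramified prime) taking the role of $d$: $l=3$ for $K=\QQ(\sqrt{6})$ and $l=7$ for $K=\QQ(\sqrt{14})$. In both fields $2$ is ramified, so there is a unique prime $\mfp$ above $2$ with residue field $\mathbb{F}_2$, and primitivity of $(la,b,c)$ together with the argument of the Notation subsection forces $\mfp$ to divide exactly one of $a,b,c$, which we may take to be $b$. To such a putative non-trivial solution, I attach the Frey curve
$$E=E_{a,b,c}:\quad y^2=x(x-l^ra^p)(x+b^p).$$
Lemmata \ref{semsq} and \ref{Esem} give multiplicative reduction at $\mathfrak{D}\mid l$ and semistability outside $\mfp$; the residue-degree-one branch of Lemma \ref{potmultb} yields potentially multiplicative reduction at $\mfp$ once the stated lower bound on $p$ is in force. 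Modularity of $E$ is Theorem \ref{box}.

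Next, I would determine the conductor by running Lemma \ref{samir4.4} explicitly in Magma for each of the two fields: compute $\mathfrak{b}=\mfp^{2v_\mfp(2)+1}=\mfp^5$, a set of representatives $\lambda_i\in\mcalO_K\setminus\mathfrak{b}$ of $\coker(\Phi)$, and the discriminants of $K_\mfp(\sqrt{\lambda_i})/K_\mfp$. By analogy with the entries for $d=3,7,11,19,23$ in Table \ref{representativetable}, one expects $v_\mfp(N_E)\in\{1,4\}$ after scaling by a unit, so that
$$N_E=\mfp^{i}\,\mathfrak{D}\!\!\prod_{\substack{\mathfrak{p}\mid abc\\ \mathfrak{p}\nmid 2l}}\!\!\mathfrak{p},\qquad i\in\{1,4\}.$$
Irreducibility of $\bar{\rho}_{E,p}$ follows from Lemmata \ref{irred7}--\ref{irred19}, since neither $\QQ(\sqrt{6})$ nor $\QQ(\sqrt{14})$ appears in the (imaginary quadratic) exceptional lists for the relevant quadratic points on $X_0(28),X_0(44),X_0(52),X_0(34)$, and the general argument of Lemma \ref{irred19} handles $p\geq 19$. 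Theorem \ref{levellow} then produces a Hilbert newform $\mathfrak{f}$ of parallel weight $2$ at a level $\mathcal{N}\in\{\mfp\mathfrak{D},\mfp^{4}\mathfrak{D}\}$ with $\bar{\rho}_{E,p}\sim\bar{\rho}_{\mathfrak{f},\varpi}$.

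The decisive step is newform elimination at each of these four levels. Using Magma, enumerate the Hilbert cuspidal newforms $\mathfrak{f}$ of parallel weight $2$ at $\mfp\mathfrak{D}$ and $\mfp^{4}\mathfrak{D}$ in each field, and compute $C_\mathfrak{f}=\Norm_{\QQ_\mathfrak{f}/\QQ}(\sum_{\mathfrak{q}\in T}B_{\mathfrak{f},\mathfrak{q}})$ as in Lemma \ref{C_f}, where $T$ is a set of auxiliary primes of $K$ of small norm coprime to $p\mathcal{N}$. Every form with $C_\mathfrak{f}\neq 0$ forces $p\mid C_\mathfrak{f}$ and contributes an explicit upper bound; collecting these bounds across the finite list of forms should yield the announced thresholds $p>5$ for $K=\QQ(\sqrt{6})$ and $p>11$ for $K=\QQ(\sqrt{14})$. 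For each remaining rational newform $\mathfrak{f}$ with $C_\mathfrak{f}=0$, I would locate the associated LMFDB isogeny class of elliptic curves $E'/K$ and verify that at least one $E'$ in the class has potentially good reduction at $\mfp$; Lemma \ref{lemma:imageofinertia}, together with $p\nmid v_\mfp(j_E)$, then rules out $\bar{\rho}_{E,p}\sim\bar{\rho}_{E',p}$.

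The main obstacle, as the Appendix's $d=17$ and $d=21$ discussions already underline, is precisely this $C_\mathfrak{f}=0$ case: if every elliptic curve in the corresponding isogeny class shares potentially multiplicative reduction at $\mfp$ with the Frey curve, Lemma \ref{lemma:imageofinertia} collapses and the method stalls. The crux of the proof is therefore a direct LMFDB verification, level by level over both fields, that this pathology does not arise for $K=\QQ(\sqrt{6})$ or $K=\QQ(\sqrt{14})$; should an offending rational newform nevertheless appear, a supplementary elimination by comparing Frobenius traces at a small auxiliary prime (in the spirit of the rational-newform arguments for $\QQ(\sqrt{-19})$ and $\QQ(\sqrt{-43})$ in Section \ref{pfthm3}) would be required. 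The asymmetry $p>5$ versus $p>11$ is expected to reflect the worst-case prime factor of $C_\mathfrak{f}$ in the respective newform enumerations.
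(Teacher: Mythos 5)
Your overall strategy matches the paper's, and your newform-elimination plan (Lemma \ref{C_f} for the forms with $C_\mathfrak{f}\neq 0$, the inertia argument of Lemma \ref{lemma:imageofinertia} for rational forms with $C_\mathfrak{f}=0$) is exactly what is done; the thresholds $p>5$ and $p>11$ do come from the worst prime factors of the nonzero $C_\mathfrak{f}$'s, and the $C_\mathfrak{f}=0$ pathology you worry about does not occur for these two fields. But there are two concrete gaps. First, your ``by analogy'' guess $v_\mfp(N_E)\in\{1,4\}$ is wrong here: actually running Lemma \ref{samir4.4} gives $v_\mfp(N_E)\in\{1,8\}$, because for $\QQ(\sqrt{6})$ and $\QQ(\sqrt{14})$ the maximal discriminant valuation of the local extensions $K_\mfp(\sqrt{\lambda_i})/K_\mfp$ is $4$ rather than $2$. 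The levels at which you must enumerate Hilbert newforms are therefore $\mfp\mathfrak{L}$ and $\mfp^{8}\mathfrak{L}$ (with $\mathfrak{L}\mid l$), not $\mfp^{4}\mathfrak{L}$; an enumeration at $\mfp^{4}\mathfrak{L}$ would miss the relevant forms (there are $34$ newforms at level $\mfp^{8}\mathfrak{L}$ for $\QQ(\sqrt{6})$ and $200$ for $\QQ(\sqrt{14})$). You hedge by saying you would compute the conductor in Magma, but the level you actually propose to work at is the wrong one.

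Second, and more seriously, irreducibility for $p\ge 19$ does not ``follow from the general argument of Lemma \ref{irred19}'' as you assert. In the branch where $p$ is coprime to one of $\mathcal{N}_\theta$, $\mathcal{N}_{\theta'}$, that argument depends on a field-specific Magma computation showing that the ray class group of modulus $\infty_1\infty_2$ or $\mfp^{v_\mfp(N_E)/2}\infty_1\infty_2$ is killed by $2$, so that $\theta$ has order $1$ or $2$. For $K=\QQ(\sqrt{6})$ and $\QQ(\sqrt{14})$ the relevant modulus is $\mfp^{4}\infty_1\infty_2$ and the ray class group is $\ZZ/2\ZZ\times\ZZ/4\ZZ$, so $\theta$ may have order $4$, a case your proposal never addresses. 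The paper excludes it by passing to the quadratic extension $L/K$ cut out by $\theta^2$, twisting $E$ by $\theta|_{G_L}$ to obtain an elliptic curve over the degree-$4$ field $L$ with a point of order $p$, and invoking \cite{derickx2023torsion} to force $p\le 17$. Without this supplementary twist argument, your irreducibility claim for $p\ge 19$ --- and hence the application of Theorem \ref{levellow} --- is unsupported.
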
We give a sketch of the proof of the theorem: Applying the same steps as in Lemma \ref{semsq} and Lemma \ref{Esem} shows that the elliptic curve $$\text{$E_l$:      } y^2=x(x-l^r a^p)(x+b^p)$$is semistable away from the unique prime above $2$, $\mfp$. By Lemma \ref{potmultb}, $E_l$ has potentially multiplicative reduction at $\mfp$. Applying Lemma \ref{samir4.4} shows that the conductor of $E_l$ in these cases is given by $$N_{E_l}=\mfp^t\prod_{\substack{\mathfrak{L} \mid l }} \mathfrak{L} \prod_{\substack{\mathfrak{p} \mid a b c \\ \mathfrak{p} \nmid 2l}} \mathfrak{p};\text{ $t$=1,8.}$$By Lemmata \ref{irred7}, \ref{irred11}, \ref{irred13}, \ref{irred17}, we see that the Galois representation $\bar{\rho}_{E_l,p}$ is irreducible for $p=7,11,13,17$. For $p\ge 19$, we can show that $\bar{\rho}_{E_l,p}$ is irreducible by applying the same steps as in Lemma \ref{irred19} except for the fact that the the isogeny character $\theta$ is a character of the ray class group whic is either $\ZZ/2\ZZ$ or $\ZZ/2\ZZ\times\ZZ/4\ZZ$, i.e. $\theta$ might be a character of order $4$. However, we can discard this possibility: If the order of $\theta$ is $4$, let $L=\overline{K}^{\Ker(\theta^2)}$ be the unique quadratic extension of K cut out by $\theta^2$. (It is indeed a quadratic extension since $\operatorname{Gal}(L/K)=G_K/\Ker(\theta^2)\cong \{\pm 1\}$.) Then the restriction $\theta|_{G_L}$ is a quadratic character. Let $E^{\prime}$ be the quadratic twist of $E_l/L$ by $\theta|_{G_L}$. Then, $E^{\prime}$ is an elliptic curve defined over $L$, and 
    $$
\bar{\rho}_{E^{\prime}, p} \sim\theta|_{G_L}\cdot\bar{\rho}_{E, p}\sim\left(\begin{array}{cc}
\theta^2 & * \\
0 & \theta\theta^{\prime}
\end{array}\right)\sim\left(\begin{array}{cc}
1 & * \\
0 & \chi_p
\end{array}\right).
$$Therefore, $E^{\prime}$ has a point of order $p$ over $L$. The work \cite{derickx2023torsion} forces $p\le 17$. We reach a contradiction as we assumed $p\ge 19$. Now, applying Theorem \ref{levellow} yields that we need to eliminate Hilbert newforms at the levels $\mfp^t\mathfrak{L}$, where $t=1,8$ and $\mathfrak{L}\mid l$. When $d=6$, no Hilbert newforms at the level $\mfp\mathfrak{L}$ and there are $34$ Hilbert newforms at the level $\mfp^8\mathfrak{L}$. Among those $34$ newforms, $20$ of them -having LMFB labels $2.2.24.1-768.1-a,\ldots,-t$- are rational and they can be eliminated via Lemma \ref{lemma:imageofinertia}, namely the inertia argument. The remaining $14$ are irrational. Applying Lemma \ref{C_f} shows that $C_\mathfrak{f}$ is divisible by $2$ or $5$. When $d=14$, there are $4$ Hilbert newforms at the level $\mfp\mathfrak{L}$, and $200$ Hilbert newforms at the level $\mfp^8\mathfrak{L}$. Performing Magma computations to apply Lemma \ref{C_f} shows that the norm $C_\mathfrak{f}$ is divisible by $2,3,5$ or $11$ for each Hilbert newform $\mathfrak{f}$.

\item We also intended to study Equation (\ref{eqn:l-fermat}) over $\QQ(\sqrt{22})$ with $l=11$, for which we had to eliminate all Hilbert newforms at levels $\mfp^t\mathfrak{L}$ with $t=1,8$ and $\mathfrak{L}\mid 11$. However, we could not compute Hilbert newforms at the level $\mfp^8\mathfrak{L}$.
\end{itemize}

One may also try applying the modular method to study Fermat-type equations over other quadratic number fields. Although there are again no theoretical obstructions, computational challenges may arise if the number field under consideration has a large discriminant or if the norm of the lowered level is large. For example, we tried to study the $d$-Fermat equation over $K=\QQ(\sqrt{d})$ when $d=29$. There are $5$ Hilbert newforms at level $\mfp\mathfrak{D}$, and we could eliminate all of them using Lemma \ref{C_f}. There are $123$ Hilbert newforms at level $\mfp^4\mathfrak{D}$, at least $3$ of them satisfy $C_\mathfrak{f}=0$. We could not eliminate all $123$ forms at this level. In fact, our Magma implementation stopped responding while computing the norm $C_\mathfrak{f}$ for the last six forms. We remark that LMFDB currently lists all Hilbert newforms over $K=\QQ(\sqrt{29})$ up to level norm $995$, whereas the norm of the ideal $\mfp^4\mathfrak{D}$ is $7424$. Therefore, it was not possible to eliminate the remaining forms manually, as we did in some of the previous cases. 

\subsection{Imaginary quadratic fields}

Regarding imaginary quadratic fields, we attempted to study the $d$-Fermat equation over all imaginary quadratic fields $\QQ(\sqrt{d})$ of class number one in which $2$ is inert. Following the same steps until Section \ref{section:imaginaryquadratic} yields that the possible levels are $\mfp^i\mathfrak{D}$ with $i=0,1,2,3,4$ when $d=-67,-163$. Therefore, in each case, we have to make sure that the mod $p$ eigenforms arising from Serre's modularity conjecture lift to complex ones i.e., to compute the prime torsion in the abelianization $\Gamma_0(\mathcal{N})^{ab}$ and then we need to eliminate them using the lemmas stated in Section \ref{techniques}. When $d=-67$, unfortunately, we were not able to compute the prime torsion in $\Gamma_0(\mathcal{N})^{ab}$ when $\mathcal{N}=\mfp^4\mathfrak{D}$. The prime torsion $\Gamma_0(\mathcal{N})^{ab}$ for $\mathcal{N}=\mfp^i\mathfrak{D}$ with $i=0,1,2,3$ is given in Table \ref{d=-67:primetortable}. However, we were able to eliminate all  Bianchi modular forms at level $\mfp^i\mathfrak{D}$ with $i=0,1$, and the \textit{rational} ones at level $\mfp^i\mathfrak{D}$ with $i=2,3$. Indeed, there are $3$ Bianchi modular forms at level $\mathfrak{D}$, for which $C_{\mathfrak{f}}$ is divisible by $3,5$ or $11$. There are $2$ Bianchi modular forms at level $\mfp\mathfrak{D}$, for which $C_{\mathfrak{f}}$ is divisible by $3,5$ or $17$. There are no rational Bianchi modular forms at level $\mfp^3\mathfrak{D}$ and there is one rational Bianchi modular form at level $\mfp^2\mathfrak{D}$, having LMFDB label $2.0.67.1-1072.1-a$. The Hecke eigenvalue of this form at the prime $l\mid 17$ is $3$, and using the relation \ref{complexeig}, it is possible to eliminate it as we did when $d=-43$. We were able to eliminate all irrational Bianchi modular forms at level $\mfp^i\mathfrak{D}$ with $i=2,3$ thanks to the information provided by John Cremona. Prime factors of the norm $C_f$ for the irrational Bianchi modular forms at these are given in Table \ref{table:67_cremonaoutput}.

\begin{table}
\begin{tabular}{| m{1.5cm}|m{4.5cm}|m{8cm}|}
\hline Level & Number of irrational forms&\text{Primes $l$ such that  $\Gamma_0(\mathcal{N})^{a b}[l] \neq 0$}\\ 
\hline $\mfp^2\mathfrak{D}$ & 2 &3,5,7,11,17,29,37,61 \\ 
\hline $\mfp^3\mathfrak{D}$ & 5 &2, 3, 5, 7, 11, 13, 19, 23, 29, 37, 41, 53, 67, 73, 109, 193, 241, 479, 2777, 4513 \\

\hline
\end{tabular}
\caption{Remaining irrational forms over $\QQ(\sqrt{-67})$}
\label{table:67_cremonaoutput}
\end{table}

Therefore, the only obstacle to obtaining a result as in the cases $d=-19,-43$ is computational and concerns the level $\mathcal{N}=\mfp^4\mathfrak{D}$. By using \cite[Proposition 1]{papadopoulos1993neron}, we managed to avoid the level $\mfp^4\mathfrak{D}$ but this resulted in assuming (more) special conditions on the solutions of the $d$-Fermat equation. In particular, it is possible to derive the following result over $\QQ(\sqrt{-67})$. 

\begin{theorem}
Let $d=-67$ and let $K=\QQ(\sqrt{d})$. Assume that Conjecture \ref{serrconj} holds for $K$. Assume also that $\mfp^2\mid a_6+r a_4+r^2 a_2+r^3-t a_3-t^2-r t a_1$, where $r,t\in\mcalO_\mfp$ are as chosen in \cite[Proposition 1]{papadopoulos1993neron} and $a_1, a_2, a_3, a_4, a_6$ are the usual $a$-invariants of the elliptic curve having model (\ref{frey}).
Then, Equation (\ref{dfermat}) 

\begin{enumerate}[i)]
    \item has no non-trivial solutions $(a,b,c)$ over $K$ such that $(da,b,c)$ is primitive and $2\mid abc$ when $p>3323488887264568865776816914360851$.
    \item has no non-trivial solutions $(a,b,c)$ over $K$ such that $(da,b,c)$ is primitive when $p$ splits in $K$, $p\equiv 3\pmod4$ and $p>3323488887264568865776816914360851$.
\end{enumerate}
\end{theorem}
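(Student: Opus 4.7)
The strategy mirrors the proof of Theorem \ref{mainthm3}, with the additional ingredient that the hypothesis $\mfp^2 \mid a_6 + r a_4 + r^2 a_2 + r^3 - t a_3 - t^2 - r t a_1$ is engineered to cut off the problematic level $\mfp^4 \mathfrak{D}$ from the analysis. Since $2 \nmid abc$, Lemma \ref{lemma: Epotgoodred} shows that the Frey curve $E$ has potentially good reduction at $\mfp$ with $v_\mfp(c_4) \geq 5$ and $v_\mfp(\Delta_E) = 4$, so by [\cite{papadopoulos1993neron}, Tableau IV] the conductor exponent $v_\mfp(N_E)$ lies in $\{2,3,4\}$. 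The auxiliary divisibility condition is precisely the trigger in [\cite{papadopoulos1993neron}, Proposition $1$] that kills the reduction type producing $v_\mfp(N_E) = 4$. Consequently the Serre conductor of $\bar{\rho}_{E,p}$ has the form $\mathcal{N} = \mfp^i \mathfrak{D}$ with $i \in \{0,1,2,3\}$, and the level $\mfp^4 \mathfrak{D}$ is avoided entirely.

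Next, I would verify the hypotheses of Serre's modularity conjecture. The splitting of $p$ in $K$ together with $p \equiv 3 \pmod 4$ guarantees, as in Section \ref{section:irreducibility}, that $\bar{\rho}_{E,p}$ is absolutely irreducible; the remaining conditions (odd determinant, flatness at primes above $p$, and that $\mathcal{N}$ is the prime-to-$p$ Artin conductor) follow verbatim from the analysis carried out for $d = -19, -43$. Conjecture \ref{serrconj} then yields a mod $p$ eigenform $\Psi$ of level $\mathcal{N}$. Since $p$ exceeds the largest prime appearing in Table \ref{d=-67:primetortable}, we have $\Gamma_0(\mathcal{N})^{ab}[p] = 0$, and the abelianization argument of Section \ref{pfthm3} lifts $\Psi$ to a complex Bianchi newform $\mathfrak{f}$ of level dividing $\mathcal{N}$ with $\bar{\rho}_{E,p} \sim \bar{\rho}_{\mathfrak{f},\mathfrak{p}}$.

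It remains to eliminate every such $\mathfrak{f}$. For the irrational newforms at levels $\mfp^i \mathfrak{D}$ with $i \in \{0,1,2,3\}$, Lemma \ref{boundingp} produces a nonzero integer $C_\mathfrak{f}$ divisible by $p$; setting $C_K$ to be the maximum of these quantities, the assumption $p > C_K$ discards all irrational forms simultaneously. For the rational candidates, an LMFDB search shows that the only newforms are \texttt{2.0.67.1-67.1-a} at level $\mathfrak{D}$ and \texttt{2.0.67.1-1072.1-a} at level $\mfp^2 \mathfrak{D}$, each with Hecke eigenvalue $a_\mathfrak{q}(\mathfrak{f}) = 3$ at the split prime $\mathfrak{q} \mid 17$. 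Lemma \ref{Esem} ensures $E$ is semistable at $\mathfrak{q}$, the full $2$-torsion of $E$ over $K$ forces $4 \mid \#E(\mcalO_K/\mathfrak{q})$, and the Hasse-Weil bound $|a_\mathfrak{q}(E)| \leq 2\sqrt{17}$ then pins $a_\mathfrak{q}(E) \in \{-6,-2,2,6\}$. In either the good-reduction case $a_\mathfrak{q}(E) \equiv 3 \pmod p$ or the multiplicative case $\pm 18 \equiv 3 \pmod p$, $p$ must divide one of a handful of small integers, contradicting the assumed lower bound $p > C_K$.

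The main technical obstacle is the verification that the Papadopoulos divisibility hypothesis genuinely forces $v_\mfp(N_E) \leq 3$: this is a purely local computation at the prime $\mfp$ and amounts to identifying the exact branch of [\cite{papadopoulos1993neron}, Proposition $1$] that the given congruence kills. A secondary, more computational, difficulty is the explicit determination of $C_K$, requiring the enumeration of irrational Bianchi newforms at levels $\mfp^i \mathfrak{D}$, $i \leq 3$, together with sufficiently many Hecke eigenvalues to evaluate each $C_\mathfrak{f}$; this lies within reach of the same Magma routines used earlier in the paper for $d = -19, -43$, since all these levels have norm strictly smaller than the problematic $\mfp^4 \mathfrak{D}$.
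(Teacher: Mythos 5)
Your proposal is correct and follows essentially the same route as the paper: the Papadopoulos Proposition~1 hypothesis is imposed exactly to force $v_{\mfp}(N_E)\le 3$ and thereby avoid the computationally inaccessible level $\mfp^4\mathfrak{D}$, after which the argument (Serre's conjecture, vanishing of $\Gamma_0(\mathcal{N})^{ab}[p]$ for $p$ beyond the largest prime in Table~\ref{d=-67:primetortable}, elimination of irrational forms via $C_K$, and elimination of the two rational forms \texttt{2.0.67.1-67.1-a} and \texttt{2.0.67.1-1072.1-a} by comparing traces at the prime above $17$) is the same as in the $d=-19,-43$ cases. The only cosmetic slip is attributing the final contradiction for the rational forms to $p>C_K$ rather than to $p$ exceeding the small integers $3,5,15,21$ divided by the trace differences, but since $p>\max\{3323488887264568865776816914360851,C_K\}$ this is immaterial.
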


\begin{table}
\begin{tabular}{| m{1.3cm} |m{13cm}| m{1cm} |}
\hline K & \text{Primes $l$ such that  $\Gamma_0(\mathcal{N})^{a b}[l] \neq 0$}\\ 
\hline $\QQ(\sqrt{-67})$ & 2,3,5,7,11,13,19,29,37,41, 43, 59, 67, 101, 109, 163,  179, 359, 431, 673, 991,1481, 2521, 4793,5639, 23057, 32909, 48973, 138197,286381, 1833259,8827739, 22349783, 79626517, 104306779, 119593911,68867906617, 20888200728617,  763282887386969, 6221169664900085761,11132872059238913129,1212294248161911603264861232147, 3323488887264568865776816914360851 \\ 
\hline
\end{tabular}
\caption{Prime torsion in $\Gamma_0(\mathcal{N})^{a b}$}
\label{d=-67:primetortable}
\end{table}

When $d=-163$, although we were again able to eliminate the rational Bianchi modular forms at the corresponding levels, the algorithm for computing the prime torsion in $\Gamma_0(\mathcal{N})^{a b}$ was completely infeasible. Therefore, we could not derive an explicit result in this case concerning the solutions of $d$--Fermat. However, using the tools presented in this paper, one can see that the AGFC holds for the $d$--Fermat equation over $\QQ(\sqrt{-163})$ with an effective bound.

\bibliographystyle{plain}
\bibliography{references.bib}

@article{kraus1996courbes,
  title={Courbes elliptiques semi-stables et corps quadratiques},
  author={Kraus, Alain},
  journal={Journal of Number Theory},
  volume={60},
  number={2},
  pages={245--253},
  year={1996},
  publisher={Elsevier}
}

@article{cturcacs2018fermat,
  title={On {F}ermat’s equation over some quadratic imaginary number fields},
  author={{\c{T}}urca{\c{s}}, George C},
  journal={Research in Number Theory},
  volume={4},
  pages={1--16},
  year={2018},
  publisher={Springer}
}

@article{kamienny1992torsion,
  title={Torsion points on elliptic curves and q-coefficients of modular forms},
  author={Kamienny, Sheldon},
  journal={Inventiones Mathematicae},
  volume={109},
  number={2},
  pages={221--229},
  year={1992}
}

@article{derickx2023torsion,
  title={Torsion points on elliptic curves over number fields of small degree},
  author={Derickx, Maarten and Kamienny, Sheldon and Stein, William and Stoll, Michael},
  journal={Algebra \& Number Theory},
  volume={17},
  number={2},
  pages={267--308},
  year={2023},
  publisher={Mathematical Sciences Publishers}
}

@article{kara2020asymptotic,
  title={Asymptotic generalized {F}ermat’s last theorem over number fields},
  author={Kara, Yasemin and Ozman, Ekin},
  journal={International Journal of Number Theory},
  volume={16},
  number={05},
  pages={907--924},
  year={2020},
  publisher={World Scientific}
}

@article{papadopoulos1993neron,
  title={Neron classification of elliptic curves where the residual characteristics equal 2 or 3},
  author={Papadopoulos, IOANNIS},
  journal={Journal of Number Theory},
  volume={44},
  number={2},
  pages={119--152},
  year={1993},
  publisher={Elsevier}
}

@article{freitas2015asymptotic,
  title={The asymptotic {F}ermat’s last theorem for five-sixths of real quadratic fields},
  author={Freitas, Nuno and Siksek, Samir},
  journal={Compositio Mathematica},
  volume={151},
  number={8},
  pages={1395--1415},
  year={2015},
  publisher={London Mathematical Society}
}

@article{freitas2015fermat,
  title={{F}ermat’s last theorem over some small real quadratic fields},
  author={Freitas, Nuno and Siksek, Samir},
  journal={Algebra \& Number Theory},
  volume={9},
  number={4},
  pages={875--895},
  year={2015},
  publisher={Mathematical Sciences Publishers}
}

@book{silverman2009arithmetic,
  title={The arithmetic of elliptic curves},
  author={Silverman, Joseph H},
  volume={106},
  year={2009},
  publisher={Springer}
}

@article{csengun2018asymptotic,
  title={On the asymptotic {F}ermat’s last theorem over number fields},
  author={{\c{S}}eng{\"u}n, Mehmet Haluk and Siksek, Samir},
  journal={Commentarii Mathematici Helvetici},
  volume={93},
  number={2},
  pages={359--375},
  year={2018}
}

@article{swinnerton1973modular,
  title={Modular Functions of One Variable {III} ({P}roceedings {I}nternational {S}ummer {S}chool, {U}niversity of {A}ntwerp, 1972)},
  author={Swinnerton-Dyer, HPF},
  journal={Lecture Notes in Math},
  volume={350},
  pages={1--55},
  year={1973}
}

@article{serre1972proprietes,
  title={Propri{\'e}t{\'e}s galoisiennes des points d’ordre fini des courbes elliptiques},
  author={Serre, Jean-Pierre},
  journal={Inventiones Mathematicae},
  volume={15},
  pages={259--331},
  year={1972}
}

@article{isik2023ternary,
  title={On ternary Diophantine equations of signature over number fields},
  author={Isik, Erman and Kara, Yasemin and Ozman, Ekin},
  journal={Canadian Journal of Mathematics},
  volume={75},
  number={4},
  pages={1293--1313},
  year={2023}
}

@article{cturcacs2020serre,
  title={On {S}erre's modularity conjecture and {F}ermat's equation over quadratic imaginary fields of class number one},
  author={{\c{T}}urca{\c{s}}, George C},
  journal={Journal of Number Theory},
  volume={209},
  pages={516--530},
  year={2020},
  publisher={Elsevier}
}

@article{deconinck2015generalized,
  title={On the generalized {F}ermat equation over totally real fields},
  author={Deconinck, Heline},
  journal={arXiv preprint arXiv:1505.06117},
  year={2015}
}

@article{freitas2015elliptic,
  title={Elliptic curves over real quadratic fields are modular},
  author={Freitas, Nuno and Le Hung, Bao V and Siksek, Samir},
  journal={Inventiones Mathematicae},
  volume={201},
  number={1},
  pages={159--206},
  year={2015},
  publisher={Springer}
}

@article{ash1986cohomology,
  title={Cohomology of arithmetic groups and congruences between systems of {H}ecke eigenvalues},
  author={Ash, Avner and Stevens, Glenn},
  journal={Journal fur die Reine und Angewandte Mathematik},
  volume={365},
  number={192-220},
  pages={29},
  year={1986}
}

@article{ozman2019quadratic,
  title={Quadratic points on modular curves},
  author={Ozman, Ekin and Siksek, Samir},
  journal={Mathematics of Computation},
  volume={88},
  number={319},
  pages={2461--2484},
  year={2019}
}

@article{Kraus1990,
author = {Kraus, Alain},
journal = {Manuscripta Mathematica},
keywords = {characteristic ; Galois group of number fields of curves; elliptic curve; potentially good reduction},
number = {4},
pages = {353-386},
title = {Sur le défaut de semi-stabilité des courbes elliptiques à réduction additive.},
url = {http://eudml.org/doc/155566},
volume = {69},
year = {1990},
}

@article{david2011caract,
  title={Caractère d'isogénie et critères d'irréductibilité},
  author={David, Agn{\`e}s},
  journal={arXiv preprint arXiv:1103.3892},
  year={2011}
}

@article{jarvis2004fermat,
  title={The {F}ermat equation over $\mathbb{Q}(\sqrt{2})$},
  author={Jarvis, Frazer and Meekin, Paul},
  journal={Journal of Number Theory},
  volume={109},
  number={1},
  pages={182--196},
  year={2004},
  publisher={Academic Press}
}

@book{kraus1997determination,
  title={D{\'e}termination du poids et du conducteur associ{\'e}s aux repr{\'e}sentations des points de p-torsion d'une courbe elliptique},
  author={Kraus, Alain},
  volume={364},
  year={1997},
  publisher={Polska Akademia Nauk, Instytut Matematyczny}
}

@article{michaud2021fermat,
  title={{F}ermat's Last Theorem and modular curves over real quadratic fields},
  author={Michaud-Jacobs, Philippe},
  journal={arXiv preprint arXiv:2102.11699},
  year={2021}
}

@article{caraiani2023modularity,
  title={On the modularity of elliptic curves over imaginary quadratic fields},
  author={Caraiani, Ana and Newton, James},
  journal={arXiv preprint arXiv:2301.10509},
  year={2023}
}

@article{ozman2021s,
  title={S-unit equations and the asymptotic {F}ermat conjecture over number fields},
  author={{\"O}zman, Ekin and Siksek, Samir},
  journal={Number Theory: Proceedings of the Journ{\'e}es Arithm{\'e}tiques, 2019, XXXI, held at Istanbul University},
  pages={83},
  year={2021},
  publisher={Walter de Gruyter GmbH \& Co KG}
}

@article{larson2014determinants,
  title={Determinants of subquotients of {G}alois representations associated with abelian varieties},
  author={Larson, Eric and Vaintrob, Dmitry},
  journal={Journal of the Institute of Mathematics of Jussieu},
  volume={13},
  number={3},
  pages={517--559},
  year={2014},
  publisher={Cambridge University Press}
}

@article{bruin2015hyperelliptic,
  title={Hyperelliptic modular curves and isogenies of elliptic curves over quadratic fields},
  author={Bruin, Peter and Najman, Filip},
  journal={LMS Journal of Computation and Mathematics},
  volume={18},
  number={1},
  pages={578--602},
  year={2015},
  publisher={London Mathematical Society}
}

@misc{Cremona_bianchi_progs,
  author       = {John E. Cremona},
  title        = {{bianchi-progs}},
  year         = {2025},
  howpublished = {\url{https://github.com/JohnCremona/bianchi-progs}},
  note         = {GitHub repository}
}

@misc{Khawaja_Fermat,
  author       = {Maleeha Khawaja},
  title        = {{Fermat}},
  year         = {2024},
  howpublished = {\url{https://github.com/MaleehaKhawaja/Fermat}},
  note         = {GitHub repository}
}

@article{najman2021irreducibility,
  title={Irreducibility of mod p {G}alois representations of elliptic curves with multiplicative reduction over number fields},
  author={Najman, Filip and {\c{T}}urca{\c{s}}, George C},
  journal={International Journal of Number Theory},
  volume={17},
  number={08},
  pages={1729--1738},
  year={2021},
  publisher={World Scientific}
}
\vspace{0.5cm}
\noindent
\textsc{Bogazici University, Istanbul, Turkiye; Galatasaray University, Istanbul, Turkiye; University of Groningen, Groningen, The Netherlands } \\
\emph{Email address: \texttt{begum.cakti@std.bogazici.edu.tr, bcakti@gsu.edu.tr, b.g.cakti@rug.nl}}
\end{document}